\documentclass[a4paper,12pt]{article}
\usepackage[top=2.5cm,bottom=2.5cm,left=2.5cm,right=2.5cm]{geometry}
\usepackage{cite, amsmath, amssymb}
\usepackage{amssymb}
\usepackage{graphicx}
\newenvironment{proof}{{\noindent \it Proof.}}{\hfill $\blacksquare$\par}
\usepackage{latexsym}
\usepackage{caption}
\usepackage{graphicx,booktabs,multirow}
\usepackage{tikz}
\usepackage{bigdelim}
\usepackage{nicematrix}
\usepackage{mathtools}
\usepackage{makecell}
\usetikzlibrary{decorations.pathreplacing}
\usetikzlibrary{intersections}
\usepackage{enumerate}
\usepackage{graphicx,booktabs,multirow}
\usepackage{appendix}
\newtheorem{theorem}{Theorem}[section]
\newtheorem{proposition}[theorem]{\rm\bfseries Proposition}
\newtheorem{lemma}[theorem]{Lemma}

\newtheorem{remark}[theorem]{Remark}

\newtheorem{definition}{Definition}[section]
\usepackage[section]{placeins}
\def\NAT@def@citea{\def\@citea{\NAT@separator}}
\allowdisplaybreaks

\begin{document}
\vspace*{10mm}

\noindent
{\Large \bf  Connected graphs with large multiplicity of $-1$ in the spectrum of the eccentricity matrix}

\vspace*{7mm}

\noindent
{\large \bf Xinghui Zhao, Lihua You*}
\noindent

\vspace{7mm}

\noindent
 School of Mathematical Sciences, South China Normal University,  Guangzhou, 510631, P. R. China,
e-mail: {\tt 2022021990@m.scnu.edu.cn},\quad{\tt ylhua@scnu.edu.cn}.\\[2mm]
$^*$ Corresponding author
\noindent

\vspace{7mm}

\noindent
{\bf Abstract} \ 
\noindent
The eccentricity matrix of a simple connected graph is obtained from the distance matrix by only keeping the largest distances for each row and each column, whereas the remaining entries become zero. This matrix is also called the anti-adjacency matrix, since the adjacency matrix  can also be obtained from the distance matrix but this time by keeping only the entries equal to $1$. It is known that, for $\lambda \not\in \{-1,0\}$ and a fixed $i\in \mathbb{N}$, there is only a finite number of graphs with $n$ vertices having $\lambda$ as an eigenvalue of multiplicity $n-i$ on the spectrum of the adjacency matrix. This phenomenon motivates researchers to consider the graphs has a large multiplicity of an eigenvalue in the spectrum of the eccentricity matrix, for example, the eigenvalue $-2$ [X. Gao, Z. Stani\'{c}, J.F. Wang, Grahps with large multiplicity of $-2$ in the spectrum of the eccentricity matrix,  Discrete Mathematics, 347 (2024) 114038]. In this paper, we characterize the connected graphs with $n$ vertices having $-1$ as an eigenvalue of multiplicity $n-i$ $(i\leq5)$ in the spectrum of the eccentricity matrix. Our results also become meaningful in the framework of
the median eigenvalue problem [B. Mohar, Median eigenvalues and the HOMO-LUMO index of graphs, Journal of Combinatorial Theory Series B, 112 (2015) 78-92].
 \\[2mm]

\noindent
{\bf Keywords:} \ Eccentricity matrix; Rank; Median eigenvalue; Multiplicity; Spectrum

\baselineskip=0.30in

\section{Introduction}

\hspace{1.5em}Throughout this paper we consider the graphs as simple and connected graphs. Let $G=(V(G),E(G))$ be a simple  graph with vertex set $V(G)=\{v_1,\cdots,v_n\}$ and edge set $E(G)$. If the vertices $v_i$ and $v_j$ are adjacent, we write  $v_i\sim v_j$ and $v_iv_j \not\sim E(G)$ otherwise. The set of the neighborhood of $v_i$ in $G$ is denoted by $N_{G}(v_i)$ (for short $N(v_i)$). Then the degree of the vertex $v$ is equal $|N(v)|$ (the cardinality of the set $N(v)$), denoted by $d_G(v)$, and $\bigtriangleup(G)=\max\{d_G(v)|v\in V(G)\}$. As usual, $K_n, C_n, P_n$ is the complete graph, cycle, path with order $n$, respectively. Let $G\cup H$ be the disjoint union of graphs $G$ and $H$. The join of two disjoint graphs $G$ and $H$, denoted by $G\vee H$, is the graph obtained by joining each vertex of $G$ to each vertex of $H$. The complement of graph $G$ is denoted by $\overline{G}$.

The distance $d_{G}(u,v)$ (for short $d(u,v)$) between two vertices $u$ and $v$ is the minimum length of the paths joining them. The diameter of $G$, denoted by
$diam(G)$, is the greatest distance between any two vertices in $G$. The eccentricity $\varepsilon_{G}(v)$ (for short $\varepsilon(v)$) of the vertex $v\in V(G)$ is given by
$\varepsilon_{G}(v) = \max\{d(u, v) | u\in V(G)\}$. Then the eccentricity matrix $\mathcal{A}(G) = (\epsilon_{uv}(G))$ of $G$ is defined as follows:
$$\epsilon_{uv}(G)=\begin{cases}
	d(u,v),&\text{if} \ d(u,v)=\min\{\varepsilon_{G}(u), \varepsilon_{G}(v)\},\\
	0,& \ \text{others}.
\end{cases}$$

Let $M$ be an $n\times n$ matrix, $P(M,\lambda)$ be characteristic polynomial of $M$,  $\xi_1(M)\geq \xi_2(M)\geq \cdots\geq \xi_n(M)$ be roots of $P(M,\lambda)=0$. We also call $\xi_1(M),\xi_2(M),\cdots, \xi_n(M)$ the eigenvalues of $M$. The spectrum of $M$ is a multiset of all eigenvalues of $M$, denoted by $Spec(M)$. The multiplicity of an eigenvalue $\xi$ in the $Spec(M)$ is denoted by $m_{M}(\xi)$.

 Since the eccentricity matrix $\mathcal{A}(G)$ is a real symmetric matrix, all its
 eigenvalues, called $\mathcal{A}$-eigenvalues of $G$, are real. Then we can arrange the $\mathcal{A}$-eigenvalues of $G$ as $\xi_1\geq \xi_2\geq \dots \geq \xi_n$. The $\mathcal{A}$-spectrum of $G$ is a multiset consisting of the $\mathcal{A}$-eigenvalues, denoted by $Spec_{\mathcal{A}}(G)$. If $\xi^{'}_1> \xi^{'}_2> \dots > \xi^{'}_k$ are the pairwise distinct $\mathcal{A}$-eigenvalues of $G$, then the $\mathcal{A}$-spectrum of $G$ can be written as
\begin{equation*}
	Spec_{\mathcal{A}}(G)=\{\xi_1, \xi_2,\dots, \xi_n\}= \left\{
	\begin{array}{cccc}
		\xi^{'}_1 &\xi^{'}_2&\dots &\xi^{'}_k \\
		m_1 &m_2 &\dots &m_k
	\end{array}
	\right\},
\end{equation*}
where $m_i=m_{\mathcal{A}(G)}(\xi^{'}_i)$ is the algebraic multiplicity of the eigenvalue $\xi^{'}_i
(1 \leq i \leq k)$. 

 Other symbols not introduced in this paper can refer to \cite{bd} and \cite{dc}.
 
  Clearly, the matrix $\mathcal{A}(G)$ is obtained from the distance matrix by only keeping the largest distances for each row and each
 column, whereas the remaining entries become $0$. That is why $\mathcal{A}(G)$ can be interpreted as the  anti-adjacency
 matrix, where the adjacency matrix obtained from the distance matrix by keeping only distances equal to 1 on each row and each
 column. From this point of view, $\mathcal{A}(G)$ and $A(G)$ are extremal among all possible distance-like matrices. As a contrast
 with $A(G)$, the eccentricity matrix has some fantastic properties, one of which is that $\mathcal{A}(G)$ of a connected graph is not
 necessarily irreducible, see the published \cite{wang2,ss,zpq}.

 In the last few decades, the eccentricity matrices of graphs inspired much interest and attracted the attention of many researchers, which caused more than forty papers published. For example, Wang et al. \cite{wang2} showed that the eccentricity matrix of trees is irreducible, Wei, He and Li \cite{wei}  determined the $n$-vertex trees with minimum $\mathcal{A}$-spectral radius and all trees with given order and diameter having minimizing  $\mathcal{A}$-spectral radius, which solved two conjectures in \cite{wang2}, 
 Wang et al. \cite{wang} studied the $\mathcal{A}$-spectral radius properties of graphs, Lei, Wang and Li \cite{lei} characterized the graphs whose second least $\mathcal{A}$-eigenvalue is greater than $-\sqrt{15-\sqrt{193}}$, Mahato and Kannan \cite{mahat} studied the inertia and spectral symmetry of the eccentricity matrix of trees,  Wang et al. \cite{wang3} characterized
 all connected graphs with exactly one positive $\mathcal{A}$-eigenvalue and so on.
 
 Here is the motivation for the research reported in this paper. An experienced reader knows that the eigenvalues $0$ and $-1$ can attain a very large multiplicity in the spectrum of the adjacency matrix of some connected graphs. For example, the multiplicity of $-1$ in the spectrum of a complete graph $K_n$ is $n-1$, while the multiplicity of $0$ in the spectrum of a complete bipartite graph (of order $n$) is $n-2$. Recently, Gao, Stani\'{c} and Wang \cite{gx} characterized graphs with $m_{\mathcal{A}(G)}(-2)= n-i$ with $i\leq 5$. This motivates us to consider the graphs for which a particular eigenvalue has a large multiplicity in the spectrum of the eccentricity matrix.

 In this paper, we will characterize graphs with $m_{\mathcal{A}(G)}(-1)= n-i$ with $i\leq 5$, and obtain the following results, which will become meaningful in the framework of
 the median eigenvalue problem \cite{mo,mo2} since, for sufficiently large $n$, the median eigenvalue of the
 obtained graphs is always $-1$.
 
 For convenience, we use $m(\xi)$ to denoted $m_{\mathcal{A}(G)}(\xi)$ in the following.
 \begin{theorem}\label{1}
 	Let $G$ be a connected graph with order $n$, $\mathcal{G}_1=\{K_{n-4}\vee4K_1,K_{n-4}\vee(2K_1\cup K_2),K_{n-4}\vee(P_3\cup K_1),K_{n-4}\vee2K_2,K_{n-4}\vee P_4,K_{n-4}\vee(K_3\cup K_1),K_{n-4}\vee C_4\}$. Then we have
 	
 	{\rm \item(i)} $m(-1)= n-1$ if and only if $G\cong K_n$;
 	
 	{\rm \item(ii)} $m(-1)\neq n-2$;
 	
 	{\rm \item(iii)} $m(-1)= n-3$ for $n\geq 4$ if and only if $G\in\{ K_{n-2}\vee2K_1,P_4\} $;
 	
 	{\rm \item(iv)} $m(-1)= n-4$ for $n\geq 9$ if and only if $G\in\{  K_{n-3}\vee(K_2\cup K_1),  K_{n-3}\vee 3K_1 \}$;
 	
 	{\rm \item(v)} $m(-1)= n-5$ for $n\geq 16$ if and only if $G\in \mathcal{G}_1\cup\{K_{n-5}\vee C_5,K_{n-5}\vee(K_1\cup P_4),K_{n-5}\vee H_1\}$, where $H_1$ is illustrated in Figure $1$.
 	
 \end{theorem}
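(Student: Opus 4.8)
\medskip
\noindent\textit{Proof plan.}

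The plan is to translate everything into a rank computation and then split on $\operatorname{diam}(G)$. First I would record that $m(-1)=n-i$ is the same as $\operatorname{rank}(\mathcal{A}(G)+I_n)=i$, so each item asks for the connected graphs of order $n$ whose matrix $\mathcal{A}(G)+I_n$ has a prescribed small rank. Item (i) then follows quickly: a real symmetric rank-$1$ matrix with all-ones diagonal equals $\mathbf{v}\mathbf{v}^{T}$ for some $\mathbf{v}$ with $v_j^{2}=1$, and since the off-diagonal entries of $\mathcal{A}(G)$ are nonnegative all the $v_j$ must agree in sign, forcing $\mathcal{A}(G)=J_n-I_n$, hence $\epsilon_{uv}=1$ for all $u\neq v$, hence $\operatorname{diam}(G)=1$ and $G\cong K_n$; the converse is trivial. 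So afterwards I assume $G\not\cong K_n$, i.e. $\operatorname{diam}(G)\geq2$, and I note that a vertex of eccentricity $1$ is adjacent to all other vertices, so its existence already forces $\operatorname{diam}(G)\leq2$.

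For $\operatorname{diam}(G)=2$ I would let $D_1$ be the set of eccentricity-$1$ vertices, $m=|D_1|$, $G_0=G-D_1$, $k=n-m$. Then $D_1$ induces a clique joined completely to $G_0$, so $G=K_m\vee G_0$ with $G_0$ having no dominating vertex (a dominating vertex of $G_0$ would lie in $D_1$); moreover every vertex of $D_1$ produces an all-ones row of $\mathcal{A}(G)$, the $D_1$-block is $J_m-I_m$, the $D_1$--$G_0$ block is all-ones, and (since each vertex of $G_0$ has a non-neighbour in $G_0$ and hence eccentricity $2$) the $G_0$-block is $2A(\overline{G_0})$. A one-line row reduction then gives $\operatorname{rank}(\mathcal{A}(G)+I_n)=1+\operatorname{rank}S(G_0)$ when $m\geq1$, where $S(G_0):=J_k-I_k-2A(G_0)=A(\overline{G_0})-A(G_0)$ is the Seidel matrix of $G_0$; and when $m=0$ it gives $\operatorname{rank}(\mathcal{A}(G)+I_n)=\operatorname{rank}(2A(\overline{G_0})+I_n)=n$, since $-\tfrac12$ is not an algebraic integer and so is not an eigenvalue of an adjacency matrix. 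Because $m=0$ would then force $n=i\leq5$, incompatible with the stated lower bounds on $n$ (and with $i=2$), I may assume $m\geq1$ throughout the diameter-$2$ analysis.

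The key step, which makes the remaining cases finite, is the same algebraic-integer observation applied to $S(G_0)$ itself: if $z\neq0$, $S(G_0)z=0$ and $\mathbf{1}^{T}z=0$, then $2A(G_0)z=-z$, which is impossible; since a putative $2$-dimensional kernel of $S(G_0)$ meets the hyperplane $\{\mathbf{1}^{T}z=0\}$ nontrivially, we get $\operatorname{nullity}S(G_0)\leq1$, whence $\operatorname{rank}S(G_0)\geq k-1$ and $i=\operatorname{rank}(\mathcal{A}(G)+I_n)\geq k$. Thus $|V(G_0)|\leq i$, and the diameter-$2$ parts of (ii)--(v) become the finite task of listing all graphs $G_0$ on at most $i$ vertices without a dominating vertex and keeping those with $\operatorname{rank}S(G_0)=i-1$ (using that $\operatorname{rank}S$ is a switching-class invariant to shorten the search). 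This yields: (ii) outright, since a zero-diagonal symmetric matrix cannot have rank $1$; the diameter-$2$ part of (iii), where $G_0\cong2K_1$; of (iv), where $G_0\in\{3K_1,\,K_2\cup K_1\}$; and of (v), where $G_0$ runs over the seven graphs on $4$ vertices with no dominating vertex (giving $\mathcal{G}_1$) together with the three graphs on $5$ vertices with no dominating vertex and singular Seidel matrix, namely $C_5$, $K_1\cup P_4$ and $H_1$.

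The remaining and hardest case is $\operatorname{diam}(G)\geq3$. Now all eccentricities are at least $2$, $\mathcal{A}(G)$ has no all-ones row, and the clean reduction is unavailable; instead I would work with the eccentricity partition $V=\bigcup_e V_e$, on which the diagonal blocks of $\mathcal{A}(G)$ are the weighted adjacency matrices of the ``distance-exactly-$e$'' graphs. The goal is to prove that here $\operatorname{rank}(\mathcal{A}(G)+I_n)$ must be large once $n$ is: after reducing to $\operatorname{diam}(G)=3$ and bounding $|V_2|$ and $|V_3|$ in terms of $\operatorname{rank}(\mathcal{A}(G)+I_n)$ — again via the fact that $-\tfrac12$ is not an adjacency eigenvalue, this time of the distance-$2$ graph — one should get that $m(-1)\geq n-5$ forces $n<16$ and $m(-1)\geq n-4$ forces $n<9$, the only small-order exception within the ranges considered being $G\cong P_4$ with $m(-1)=n-3$. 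Combining this with the diameter-$2$ searches and the trivial diameter-$1$ case completes all five items, the hypotheses $n\geq4,\,9,\,16$ being exactly what is needed to discard the finitely many low-order diameter-$\geq3$ graphs; organising this final case distinction carefully is where most of the work lies.
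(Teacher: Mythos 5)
Your diameter-at-most-two analysis is correct and takes a genuinely different, and arguably cleaner, route than the paper. For (i) you use an elementary rank-one factorization where the paper invokes the known fact that $\xi_n(G)=-1$ forces $G\cong K_n$ (Lemma \ref{ww}). More importantly, for $diam(G)=2$ with $V_1\neq\emptyset$ your identity $\operatorname{rank}(\mathcal{A}(G)+I)=1+\operatorname{rank}S(G_0)$, where $G_0=G-V_1$ and $S$ is the Seidel matrix, together with the observation that a Seidel matrix has nullity at most one (your ``$-\tfrac12$ is not an algebraic integer'' argument), immediately gives $|V(G_0)|\leq i$ and reduces the classification to a short, switching-invariant search. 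The paper reaches the same place more laboriously: an ad hoc full-rank lemma (Lemma \ref{mz}), interlacing bounds on $|V_1|$ (Lemmas \ref{qb1} and \ref{qb2}), and then one-by-one determinant checks of the $6\times 6$ matrices $B_{11}$--$B_{20}$. Your finite lists agree with the paper's: the seven $4$-vertex graphs without a dominating vertex give $\mathcal{G}_1$, and the unique singular Seidel switching class on five vertices is that of $C_5$, whose dominating-vertex-free members are $C_5$, $K_1\cup P_4$ and $H_1$.

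The genuine gap is the case $diam(G)\geq3$, which your plan only gestures at. The phrase ``after reducing to $diam(G)=3$'' is backed by nothing: your per-class bounds ($|V_e|\leq i$ because the diagonal block on $V_e$ is $e\cdot(\text{0--1 matrix})+I$, hence nonsingular) only yield $n\leq i\cdot(\text{number of nonempty classes})$, and this does not eliminate larger diameters. Concretely, for item (iv) a diameter-$4$ graph can have $|V_2|+|V_3|+|V_4|$ up to $12\geq 9$, so the range $9\leq n\leq 12$ is untouched; for item (iii) diameters $4,5,\dots$ are not excluded at all; and item (ii) carries no lower bound on $n$, so its diameter-$\geq3$ case is not ``outright'' covered by the diameter-two search. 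The paper supplies exactly the missing ingredients: Lemma \ref{n-41} ($diam\geq4$ implies $m(-1)\leq n-5$) and Lemma \ref{n-51} ($diam\geq5$ implies $m(-1)\leq n-6$), both proved by determinant case analysis of principal submatrices along a diametral path, while Theorem \ref{n-2} settles (ii) for every $n$ by counting positive $\mathcal{A}$-eigenvalues via Lemmas \ref{w2} and \ref{ww}.

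A second, smaller omission of the same kind: your assertion that the only diameter-$3$ exception is $P_4$ is stated, not proved. The paper's Theorem \ref{n-32}, Case 1, must first show $\varepsilon(v_1)=\varepsilon(v_2)=2$ on a diametral path and then rule out every $5$-vertex extension $H_2$--$H_7$ by further determinant computations before concluding $G\cong P_4$ when $n=4$. Until you supply arguments of this type (or a substitute that bounds the diameter and settles diameter $3$ for small $n$), items (ii)--(v) are not yet proved; the diameter-$\leq2$ half, however, stands and is a nice simplification.
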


  \begin{figure}[!h]
 	\centering
 	\begin{tikzpicture}
 		\node[anchor=south west,inner sep=0] (image) at (0,0) {\includegraphics[width=0.2\textwidth]{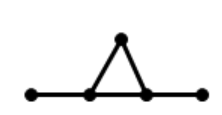}};
 		\begin{scope}[
 			x={(image.south east)},
 			y={(image.north west)}
 			]
 			\node [black, font=\bfseries] at (0.53,-0.15) { };
 	
 		\end{scope}
 	\end{tikzpicture}
 	
 	\caption{ $H_1$.}\label{Figure 1}
 \end{figure}

\section{Preliminaries}\label{sec-pre}
\hspace{1.5em} In this section, we recall some definitions and results which will be used in next section.

\begin{definition}{\rm(\!\!\cite{AE})}\label{d1}
	Let $M$ be a complex matrix of order $n$ described in the following
	block form\begin{equation*}
		M=\begin{pmatrix}
			M_{11}&\cdots&M_{1l}\\
			\vdots&\ddots&\vdots\\
			M_{l1}&\cdots&M_{ll}
		\end{pmatrix}
	\end{equation*}
	where the blocks $M_{ij}$ are $n_i \times n_j$ matrices for any $1\leq i, j \leq l$ and $n= n_1 +\cdots + n_l$.
	For $1 \leq i, j \leq l$, let $q_{ij}$ denote the average row sum of $M_{ij}$, i.e. $q_{ij}$ is the sum of all
	entries in $M_{ij}$ divided by the number of rows. Then $Q(M)=(q_{ij})$ (or simply $Q$) is
	called the quotient matrix of $M$. If, in addition, for each pair $i, j$, $M_{ij}$ has a constant row
	sum, i.e., $M_{ij}\vec{e}_{n_j}= q_{ij}\vec{e}_{n_i}$, then $Q$ is called the equitable quotient matrix of $M$, where
	$\vec{e}_k=(1, 1,\cdots, 1)^T \in \mathcal{C}^k$, and $\mathcal{C}$ denotes the field of complex numbers.
\end{definition}

\begin{lemma}{\rm(\!\!\cite{ylh})}\label{qjz}
	Let $M$ be defined as Definition \ref{d1} such that $M_{ij}=s_{ij}J_{n_i,n_j}$ for $i\neq j$, and $M_{ii} =
	s_{ii}J_{n_i,n_i} + p_iI_{n_i}$. Then the equitable quotient matrix of $M$ is $Q=(q_{ij})$ with $q_{ij}=s_{ij}n_j$
	if $i \neq j$, and $q_{ii} = s_{ii}n_i + p_i$ for $1\leq i, j \leq l$. Moreover,\begin{equation*}
		Spec(M)=Spec(Q)\cup \left\{
		\begin{array}{cccc}
			p_1 &\cdots & p_l\\
			n_1-1 &\cdots &n_l-1
		\end{array}
		\right\}.
	\end{equation*} 
	
\end{lemma}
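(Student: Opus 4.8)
The plan is to exhibit an explicit decomposition of $\mathbb{C}^n$ into two $M$-invariant subspaces and read off the whole spectrum from the action of $M$ on each piece. First, for the equitable quotient matrix claim, I would compute row sums block by block: every row of $M_{ij}=s_{ij}J_{n_i,n_j}$ sums to $s_{ij}n_j$ (for $i\neq j$), and every row of $M_{ii}=s_{ii}J_{n_i,n_i}+p_iI_{n_i}$ sums to $s_{ii}n_i+p_i$. Since these sums are constant across the rows of each block, Definition \ref{d1} yields both that $Q$ is equitable and that its entries are exactly $q_{ij}=s_{ij}n_j$ and $q_{ii}=s_{ii}n_i+p_i$, as asserted.

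Next I would produce the eigenvalues $p_i$. Let $W_i\subseteq\mathbb{C}^n$ consist of the vectors supported on the $i$-th block whose entries sum to zero, so $\dim W_i=n_i-1$. For such a $v$, writing $v$ for its block-$i$ component, we get $M_{ii}v=(s_{ii}J_{n_i}+p_iI_{n_i})v=p_iv$, because $J_{n_i}v=(\vec{e}_{n_i}^{\,T}v)\vec{e}_{n_i}=0$; and for $j\neq i$ the block $M_{ji}=s_{ji}J_{n_j,n_i}$ annihilates $v$ for the same reason. Hence $Mv=p_iv$. The subspaces $W_1,\dots,W_l$ have pairwise disjoint supports, so $W:=W_1\oplus\cdots\oplus W_l$ has dimension $\sum_i(n_i-1)=n-l$ and contributes $p_i$ with multiplicity $n_i-1$ for each $i$.

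For the remaining $l$ eigenvalues I would use the block-constant subspace $U$ spanned by the lifts $\hat{c}$ of vectors $c=(c_1,\dots,c_l)^T\in\mathbb{C}^l$, where $\hat{c}$ equals $c_i$ on every coordinate of block $i$. A direct computation shows that the $j$-th block of $M\hat{c}$ is $(Qc)_j\,\vec{e}_{n_j}$, that is $M\hat{c}=\widehat{Qc}$; thus the lift map intertwines $M|_U$ with $Q$, so $M|_U$ has spectrum $Spec(Q)$. Since every vector splits uniquely on each block into its average (a block-constant part) plus a zero-sum remainder, one has $\mathbb{C}^n=U\oplus W$, and both summands are $M$-invariant by the computations above. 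The characteristic polynomial therefore factors as $P(M,\lambda)=P(Q,\lambda)\prod_{i=1}^{l}(p_i-\lambda)^{\,n_i-1}$, which is precisely the asserted multiset identity for $Spec(M)$.

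The argument is largely bookkeeping; the one point requiring care is confirming that $U$ and $W$ are genuinely complementary and invariant, so that no eigenvalue is double-counted or missed — in particular, that a value coinciding among several of the $p_i$, or appearing in both $Spec(Q)$ and some $p_i$, is accounted for with the correct total multiplicity. This is settled cleanly by the direct-sum decomposition $\mathbb{C}^n=U\oplus W$, which forces the factorization of $P(M,\lambda)$ and hence the stated multiset union.
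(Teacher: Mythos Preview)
Your argument is correct and is the standard invariant-subspace proof of this result: split $\mathbb{C}^n$ into the block-constant subspace $U$ (on which $M$ acts as $Q$) and the zero-block-sum subspace $W=\bigoplus_i W_i$ (on which $M$ acts as $p_i$ on $W_i$), and read the spectrum from the direct-sum decomposition. One cosmetic slip: the factorization should read $P(M,\lambda)=P(Q,\lambda)\prod_{i=1}^{l}(\lambda-p_i)^{n_i-1}$ rather than $(p_i-\lambda)^{n_i-1}$, but this does not affect the multiset conclusion.

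There is nothing to compare against in the paper itself: Lemma~\ref{qjz} is quoted from \cite{ylh} and no proof is given here. Your proof is exactly the argument one would expect for this lemma (and is essentially how it is proved in the cited source), so there is no methodological divergence to discuss.
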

 


\begin{lemma}{\rm(\!\!\cite{is})} \label{pri}
	Let $M$ be an $ n\times n$ Hermitian matrix, $M^*$ be its $k\times k$  principal submatrix, and $\xi_i(M)$ be the $i$-th largest eigenvalue of $M$. Then  for $i \in\{1, 2, . . ., k\}$, we have $$\xi_i(M) \geq \xi_i(M^* ) \geq \xi_{n-k+i}(M).$$
\end{lemma}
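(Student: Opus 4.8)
The statement is the Cauchy (eigenvalue) interlacing theorem, so the plan is to derive it from the variational (Courant--Fischer) characterization of the eigenvalues of a Hermitian matrix. First I would remove all loss of generality from the phrase ``principal submatrix'': since such a submatrix is obtained by deleting a fixed set of $n-k$ rows together with the identically indexed columns, a single permutation similarity $M \mapsto P^{T} M P$ (which leaves $M$ Hermitian and its spectrum, hence every $\xi_j$, unchanged) places $M^*$ in the leading $k \times k$ position. Writing $W = \{(y,0)^{T} : y \in \mathbb{C}^k\} \subseteq \mathbb{C}^n$ for the associated coordinate subspace, the one computation that drives everything is that for $x = (y,0)^{T} \in W$ one has $x^* M x = y^* M^* y$ and $x^* x = y^* y$; thus the Rayleigh quotient of $M$ restricted to $W$ coincides with that of $M^*$, and $S \mapsto \{(y,0)^{T} : y \in S\}$ is a dimension-preserving bijection between subspaces of $\mathbb{C}^k$ and subspaces of $\mathbb{C}^n$ lying inside $W$.

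Next I would invoke the Courant--Fischer theorem in both of its forms: for a Hermitian matrix $M$ of order $n$,
$$\xi_j(M) = \max_{\dim S = j}\ \min_{0 \neq x \in S} \frac{x^* M x}{x^* x} = \min_{\dim U = n-j+1}\ \max_{0 \neq x \in U} \frac{x^* M x}{x^* x},$$
where $S,U$ range over subspaces of $\mathbb{C}^n$; the same identities hold for $M^*$ with $\mathbb{C}^n$ replaced by $\mathbb{C}^k$. This is itself a consequence of the spectral theorem together with a dimension count, so it may be cited as standard or established in a couple of lines.

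For the upper inequality $\xi_i(M) \geq \xi_i(M^*)$ I would use the max--min form: restricting the outer maximum defining $\xi_i(M)$ to those $i$-dimensional subspaces $S$ contained in $W$ can only decrease (or preserve) its value, and by the bijection above this restricted maximum is precisely the max--min expression for $\xi_i(M^*)$. For the lower inequality $\xi_i(M^*) \geq \xi_{n-k+i}(M)$ I would instead use the min--max form, observing that the defining index for $\xi_{n-k+i}(M)$ uses subspaces of dimension $n-(n-k+i)+1 = k-i+1$, which is exactly the dimension appearing in the min--max form of $\xi_i(M^*)$. Restricting the outer minimum over $(k-i+1)$-dimensional subspaces $U$ of $\mathbb{C}^n$ to those lying in $W$ can only increase (or preserve) its value, and that restricted minimum equals the min--max expression for $\xi_i(M^*)$; hence $\xi_{n-k+i}(M) \leq \xi_i(M^*)$. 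Chaining the two gives the asserted double inequality.

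The only place demanding genuine care is the bookkeeping: one must select the correct Courant--Fischer form for each inequality, verify the index shift $n-(n-k+i)+1 = k-i+1$, and — crucially — keep straight the direction of each inequality under restriction, namely that shrinking the feasible set of a maximum lowers it while shrinking that of a minimum raises it. As a cross-check I would keep in reserve the alternative route of first proving the single-deletion case $k=n-1$ (where the two eigenvalue lists interlace) and then iterating it $n-k$ times; however, the direct min--max argument above is cleaner and avoids the repeated re-indexing that the inductive version entails.
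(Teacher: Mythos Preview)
Your argument is the standard and correct derivation of the Cauchy interlacing theorem from the Courant--Fischer min--max principle; the bookkeeping on subspace dimensions and inequality directions is handled correctly. Note, however, that the paper does not supply its own proof of this lemma at all: it is quoted without proof from the reference \cite{is} (Cvetkovi\'{c}, Doob, Sachs, \emph{Spectra of Graphs}), so there is nothing to compare against beyond observing that your Courant--Fischer route is exactly the classical proof one finds in such references.
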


A graph is $H$-free if it does not contain $H$ as an  induced subgraph, where $H$ can be a graph or a set of graphs. 

\begin{lemma}{\rm(\!\!\cite{vz})}\label{fr}
	A graph is a complete multipartite graph if and only if it is $\overline{P_3}$-free.
\end{lemma}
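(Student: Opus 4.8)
The plan is to translate both properties into a single statement about the non-adjacency relation on $V(G)$ and then show that the two conditions coincide. The first step is to record explicitly what $\overline{P_3}$ is: since $P_3$ is the path on three vertices, its complement is $\overline{P_3}\cong K_2\cup K_1$, i.e.\ three vertices carrying exactly one edge. Thus ``$G$ contains an induced $\overline{P_3}$'' means precisely that there exist three distinct vertices $u,v,w$ with $u\sim w$ but $u\not\sim v$ and $v\not\sim w$. Fixing this reformulation at the outset is what makes the rest of the argument transparent.

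Next I would introduce on $V(G)$ the relation $\approx$ defined by $u\approx v$ if and only if $u=v$ or $u\not\sim v$, that is, non-adjacency together with reflexivity. This relation is automatically reflexive and symmetric, so $G$ is complete multipartite exactly when $\approx$ is an equivalence relation: in that case the equivalence classes are independent sets (any two distinct vertices in one class are non-adjacent), while any two vertices in different classes are adjacent (they are distinct and not $\approx$-related), which is precisely the definition of a complete multipartite graph with the classes as parts. Hence the whole lemma reduces to the claim that $\approx$ is transitive if and only if $G$ is $\overline{P_3}$-free.

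For the forward direction, I would assume $G$ is complete multipartite and take distinct $u,v,w$ with $u\not\sim v$ and $v\not\sim w$; then $u,v$ lie in a common part and $v,w$ lie in a common part, so all three share one part and $u\not\sim w$, which rules out the induced $\overline{P_3}$ identified above. For the converse I would argue by the contrapositive of transitivity: if $\approx$ fails to be transitive, there are distinct $u,v,w$ with $u\not\sim v$, $v\not\sim w$ but $u\sim w$, and these three vertices induce exactly $K_2\cup K_1=\overline{P_3}$, so $G$ is not $\overline{P_3}$-free.

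Once the relation $\approx$ is fixed, the argument is essentially a tautology, so I do not expect a serious obstacle. The only points needing a little care are verifying that the equivalence classes of $\approx$ genuinely form a complete multipartite decomposition (independence inside each class and completeness between distinct classes) and handling the degenerate cases where two of $u,v,w$ coincide, which make the transitivity verification trivial. Assembling these observations yields the stated equivalence.
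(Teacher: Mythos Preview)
Your argument is correct and is the standard proof of this well-known fact. Note, however, that the paper does not supply its own proof of this lemma: it is stated as a cited result from~\cite{vz} and used as a black box, so there is no in-paper proof to compare against. Your equivalence-relation formulation via the non-adjacency relation~$\approx$ is clean and complete; nothing further is needed.
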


\begin{lemma}{\rm(\!\!\cite{hl})}\label{fr2}
	Let $G$ be a connected graph with order $n\geq 3$. Then $G\cong K_l\vee (K_s\cup K_t)$ if and only if $G$ is $\{K_{1,3},P_4,C_4\}$-free, where $s,t\geq 0$ and $l\geq 1$. 
\end{lemma}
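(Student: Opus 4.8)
The plan is to prove both implications by exploiting that the forbidden set behaves well under complementation. Using $\overline{A \vee B} = \overline{A} \cup \overline{B}$ and $\overline{A \cup B} = \overline{A} \vee \overline{B}$, one computes
$\overline{K_l \vee (K_s \cup K_t)} = lK_1 \cup K_{s,t}$,
the disjoint union of $l$ isolated vertices with a single complete bipartite graph. For the necessity (``only if'') direction I would then record $\overline{K_{1,3}} = K_1 \cup K_3$, $\overline{P_4} = P_4$, $\overline{C_4} = 2K_2$, and use that $G$ is $H$-free if and only if $\overline{G}$ is $\overline{H}$-free. Thus it suffices to check that $lK_1 \cup K_{s,t}$ is $\{K_1\cup K_3,\,P_4,\,2K_2\}$-free, which is immediate: the graph is triangle-free, so it contains no $K_1 \cup K_3$; a complete bipartite graph is a cograph, hence $P_4$-free, and adding isolated vertices preserves this; and any two edges of $K_{s,t}$ lie on a common induced $C_4$, so there is no induced $2K_2$ (the isolated vertices contribute no edges).

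The substantive direction is the sufficiency (``if''). Since $G$ is $P_4$-free and $P_4$ is self-complementary, $G$ is a cograph, so by the Seinsche dichotomy every induced subgraph on at least two vertices is either disconnected or has disconnected complement. As $G$ is connected with $n \geq 3$, its complement is disconnected; letting $V_1,\dots,V_m$ with $m \geq 2$ be the vertex sets of the components of $\overline{G}$ gives a join decomposition $G = G[V_1] \vee \cdots \vee G[V_m]$ in which each $G[V_i]$ is co-connected. I would then bring in $C_4$-freeness: if two parts $V_i, V_j$ each contained a non-adjacent pair, those four vertices would induce a $C_4$ across the join; hence at most one part is non-complete. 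A complete co-connected part on $\geq 2$ vertices would have an empty, disconnected complement, so every complete part is a single vertex. Therefore all parts but at most one are singletons, whose union is a clique $K_l$ of universal vertices with $l \geq 1$ (since $m \geq 2$); if there is no exceptional part then $G \cong K_n$ and we are done with $s=t=0$.

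Otherwise let $D$ be the unique non-complete part. Being a co-connected cograph on $\geq 2$ vertices, $D$ is disconnected. Fixing a universal vertex $u$, claw-freeness pins down $D$ completely: if some component of $D$ were not complete, a non-adjacent pair $a,c$ inside one component together with any vertex $d$ of another component would make $\{u,a,c,d\}$ an induced $K_{1,3}$; and if $D$ had three or more components, one vertex from each of three components together with $u$ would again induce $K_{1,3}$. Hence $D$ has exactly two components, each a clique, so $D \cong K_s \cup K_t$ with $s,t \geq 1$, giving $G \cong K_l \vee (K_s \cup K_t)$.

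The main obstacle is the rigorous passage to the join decomposition: one must invoke the cograph dichotomy and the recursive fact that a co-connected $P_4$-free graph on at least two vertices is disconnected, and one must check that the $C_4$- and $K_{1,3}$-arguments each require only a single universal vertex, which is exactly what $m \geq 2$ supplies. A fully self-contained alternative avoiding cographs would instead first establish $\alpha(G) \leq 2$ (a shortest path of length $\geq 3$ is an induced $P_4$, forcing every non-adjacent pair to distance $2$, after which claw- and $C_4$-freeness determine the common-neighbour structure); this route is elementary but distinctly more case-heavy, which is why I would favour the complement and cograph formulation above.
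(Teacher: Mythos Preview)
The paper does not supply its own proof of this lemma: it is quoted verbatim from \cite{hl} and used as a black box in Subcase~2.1.4 of Lemma~\ref{n-52}. So there is no in-paper argument to compare against.

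Your proof is correct. The necessity direction is clean: the complement identities are right, and the three checks ($K_1\cup K_3$, $P_4$, $2K_2$) on $lK_1\cup K_{s,t}$ all go through, with the minor caveat that ``any two edges of $K_{s,t}$ lie on a common induced $C_4$'' should be read as ``any two \emph{disjoint} edges'', but the conclusion (no induced $2K_2$) is unaffected. The sufficiency direction is also sound: Seinsche's dichotomy gives the join decomposition of the connected cograph $G$; the $C_4$-argument across the join correctly forces all but at most one co-component to be complete, and co-connectedness then forces those complete parts to be singletons (so $l\ge 1$ since $m\ge 2$); finally the two claw-arguments using a single universal vertex $u$ correctly pin the remaining part $D$ down to a disjoint union of exactly two cliques. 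The boundary case $G\cong K_n$ (no exceptional part) is handled with $s=t=0$, consistent with the lemma's hypotheses.

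Compared with how such results are typically proved in the source literature (often by a direct distance/neighbourhood analysis showing $\operatorname{diam}(G)\le 2$ and then case-splitting on adjacencies), your route through complements and cograph structure is shorter and more conceptual: Seinsche's theorem does the heavy lifting, and each forbidden subgraph is invoked exactly once. The trade-off is that you import the cograph dichotomy as a known fact; if a fully elementary proof were required, the alternative you sketch at the end (bounding $\alpha(G)$ and analysing common neighbours) would work but, as you note, is more laborious.
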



\begin{lemma}{\rm(\!\!\cite{gx})} \label{gx}
	The set $\{v_1,v_2,\cdots,v_k\}$ of either duplicate or co-duplicate vertices of a graph $G$ gives rise to an eigenvalue $\xi$ of multiplicity at least $k-1$ in the spectrum of $\mathcal{A}(G)$, along with the following:
	$$\xi=\begin{cases}
		-2,&\text{if}\ \varepsilon(v_1)=2\ \text{and}\ v_1,\cdots,v_k\ \text{are duplicate}; \\
		0,&\text{if}\ \varepsilon(v_1)\neq2\ \text{and}\ v_1,\cdots,v_k\ \text{are duplicate}; \\
		-1,&\text{if}\ \varepsilon(v_1)=1\ \text{and}\ v_1,\cdots,v_k\ \text{are co-duplicate};\\
		0,&\text{if}\ \varepsilon(v_1)\neq1\ \text{and}\ v_1,\cdots,v_k\ \text{are co-duplicate}.
	\end{cases}$$
\end{lemma}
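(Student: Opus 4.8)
The plan is to handle all four cases by a single eigenvector construction that exploits the symmetry of twin vertices, reading off the four values of $\xi$ only at the end. I would first fix notation: put $T=\{v_1,\dots,v_k\}$ and $R=V(G)\setminus T$, and order the vertices of $G$ so that those of $T$ come last, so that
\begin{equation*}
\mathcal{A}(G)=\begin{pmatrix} C & D \\ D^{T} & B \end{pmatrix},
\end{equation*}
where $B$ is the $k\times k$ block indexed by $T$ and $D$ is the $|R|\times k$ block recording the entries between $R$ and $T$. The crucial structural observation is that for duplicate (resp.\ co-duplicate) vertices, the transposition exchanging any two of them while fixing every other vertex is an automorphism of $G$. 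Hence $\varepsilon(v_1)=\cdots=\varepsilon(v_k)$, and for every $w\in R$ we have $d(v_1,w)=\cdots=d(v_k,w)$; by the definition of $\mathcal{A}(G)$ this forces $\epsilon_{w,v_1}(G)=\cdots=\epsilon_{w,v_k}(G)$, a common value I will call $c_w$. This \emph{column-constancy} of the $R$--$T$ block is what makes the construction work.

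Next I would compute $B$. For $i\ne j$ one has $d(v_i,v_j)=2$ in the duplicate case (the two are non-adjacent and, since $G$ is connected with $n\ge2$, share a neighbour) and $d(v_i,v_j)=1$ in the co-duplicate case. Writing $t$ for the common off-diagonal entry of $B$, the definition of $\mathcal{A}(G)$ gives $t=d(v_1,v_2)$ precisely when $d(v_1,v_2)=\min\{\varepsilon(v_1),\varepsilon(v_2)\}$, and $t=0$ otherwise. Running through the four cases: in the duplicate case $\varepsilon(v_i)\ge2$, so the distance $2$ survives exactly when $\varepsilon(v_1)=2$ (giving $t=2$) and is deleted when $\varepsilon(v_1)\ne2$ (giving $t=0$); in the co-duplicate case $\varepsilon(v_i)\ge1$, so the distance $1$ survives exactly when $\varepsilon(v_1)=1$ (giving $t=1$) and is deleted otherwise (giving $t=0$). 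In every case $B=t(J_k-I_k)$, whose spectrum consists of $t(k-1)$ once and $-t$ with multiplicity $k-1$; the value $-t$ matches exactly the $\xi\in\{-2,0,-1,0\}$ asserted in the four cases.

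Finally I would lift the $-t$ eigenspace of $B$ to the full matrix. Given $y\in\mathbb{R}^{k}$ with $\vec{e}_k^{T}y=0$, define $x$ to equal $y$ on $T$ and $0$ on $R$. At a twin row $v_i$ the contribution from $R$ vanishes, so $(\mathcal{A}(G)x)_{v_i}=(By)_i=-t\,y_i=-t\,x_{v_i}$; at a row $w\in R$ column-constancy gives $(\mathcal{A}(G)x)_{w}=\sum_{j=1}^{k}\epsilon_{w,v_j}(G)\,y_j=c_w\,(\vec{e}_k^{T}y)=0=-t\,x_w$. Thus every such $x$ is a $\xi$-eigenvector with $\xi=-t$, and since $\{y:\vec{e}_k^{T}y=0\}$ is $(k-1)$-dimensional, these vectors are linearly independent and yield $m_{\mathcal{A}(G)}(\xi)\ge k-1$, proving all four cases simultaneously.

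I expect the only delicate part to be the middle step, namely deciding case by case whether the off-diagonal twin distance equals the minimum of the two relevant eccentricities and hence survives into $\mathcal{A}(G)$; this is precisely where the distinction between $\xi=-2,-1$ and $\xi=0$ is pinned down. Once $B=t(J_k-I_k)$ is established, the column-constancy identity makes the eigenvector verification routine, so the symmetry/automorphism observation in the first step is the real engine of the proof.
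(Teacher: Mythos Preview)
The paper does not prove this lemma at all; it is quoted from \cite{gx} and used as a black box. There is therefore no ``paper's own proof'' to compare your argument against.

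Your proof is correct and is the standard twin-vertex argument one would expect: the automorphism swapping twins forces identical rows of $\mathcal{A}(G)$ outside the $T$-block, so any zero-sum vector supported on $T$ is killed on the $R$-rows, while on the $T$-rows it sees only $B=t(J_k-I_k)$ and is thus a $(-t)$-eigenvector. The case analysis determining $t\in\{2,0,1,0\}$ is handled correctly: in the duplicate case the common distance is $2$ and survives precisely when the common eccentricity equals $2$; in the co-duplicate case the common distance is $1$ and survives precisely when the common eccentricity equals $1$. The only implicit assumption is $k\ge2$ (needed so that $d(v_i,v_j)$ makes sense and so that duplicates actually have a common neighbour in the connected graph), but for $k=1$ the claimed bound $m_{\mathcal{A}(G)}(\xi)\ge 0$ is vacuous anyway.
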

 
 Let $G$ be a graph with vertex set $V(G)=\{v_1,\cdots,v_n\}$, $U_1,\cdots,U_n$ be mutually nonempty finite sets, the graph $H$ with vertex set  $U_1\cup\cdots\cup U_n$ as follows: for each $i$, the vertices of $U_i$ are either all mutually adjacent ($U_i$ is a clique), or all mutually nonadjacent($U_i$ is an independent set); for $1\leq i,j\leq n$ and $i\neq j$, each vertex of $U_i$ is adjacent to each vertex of $U_j$ if and only if $v_i$ and $v_j$ are adjacent in $G$. We call $H$ is a mixed extension of $G$. Furthermore, we represent a mixed extension by an $n$-tuple $(t_1,...,t_n)$ of nonzero integers, where $t_i > 0$ indicates that $U_i$ is a clique of order $t_i$, and $t_i < 0$ means that $U_i$ is an independent set of order $-t_i$. For example,  $S_{k+1}$ is a star with the
 vertex $v_0$ of degree $k$ and the other vertices $v_1, v_2,..., v_k$,  $S(t_0,t_1,\cdots,t_k)$ is the mixed extension of the star
 $S_{k+1}$ of the type $(t_0,t_1,\cdots,t_k)$, where $t_i (0 \leq i \leq k)$ is an integer.
 
 \begin{lemma}{\rm(\!\!\cite{wang3})}\label{w2}
 Let $G$ be a connected graph. Then $G$ has exactly one positive $\mathcal{A}$-eigenvalue if and only if $G\cong S(t_0,-p,t_1,\cdots,t_q)$ with $p,q\geq 0, t_0\geq 1$ and $t_i\geq 2 (1\leq i \leq q)$. Furthermore,  $m(-1)=t_0-1$.
 	
 \end{lemma}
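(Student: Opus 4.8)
The plan is to prove both implications of the equivalence through the block structure that the eccentricity matrix acquires once the diameter is controlled, and then to read off $m(-1)$ from the same decomposition. The organizing observation is that if $\mathrm{diam}(G)=2$, and $D$ denotes the set of vertices of eccentricity $1$ (the dominating vertices, which necessarily form a clique) while $R=V(G)\setminus D$ consists of the vertices of eccentricity $2$, then every entry of $\mathcal{A}(G)$ is forced: a pair meeting $D$ contributes $1$, while two vertices of $R$ contribute $2$ when non-adjacent and $0$ when adjacent. Hence, writing $G[R]$ for the induced subgraph, $\mathcal{A}(G)=\begin{pmatrix} J-I & J \\ J & 2A_{\overline{G[R]}} \end{pmatrix}$, where $\overline{G[R]}$ is the complement of $G[R]$. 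This identity is what I would lean on throughout, with the complete graph handling the diameter-$1$ case separately.

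For the necessity direction, I would first show that exactly one positive $\mathcal{A}$-eigenvalue forces $\mathrm{diam}(G)\le 2$. If $\mathrm{diam}(G)=d\ge 3$, I would take a diametral pair $u,v$ (so $\varepsilon(u)=\varepsilon(v)=d$ and $\mathcal{A}_{uv}=d$) together with interior vertices of a shortest $u$–$v$ path and exhibit a small principal submatrix of $\mathcal{A}(G)$ (the $P_4$-type $4\times 4$ block, whose characteristic polynomial factors to reveal two positive roots) carrying two positive eigenvalues, contradicting interlacing (Lemma \ref{pri}). Once $\mathrm{diam}(G)\le 2$, the block $2A_{\overline{G[R]}}$ is a principal submatrix of $\mathcal{A}(G)$ and so also has at most one positive eigenvalue. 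Concretely, if $G[R]$ contained an induced $P_3$, say $u\!-\!w\!-\!v$ with $u\not\sim v$, then together with any dominating vertex $d\in D$ the rows and columns indexed by $d,u,w,v$ would form a principal submatrix whose characteristic polynomial factors to exhibit two positive eigenvalues, again contradicting Lemma \ref{pri}. Hence $G[R]$ is $P_3$-free, i.e. a disjoint union of cliques (equivalently $\overline{G[R]}$ is complete multipartite by Lemma \ref{fr}), and since $D$ is a clique joined to all of $R$ we obtain $G\cong K_{t_0}\vee(pK_1\cup K_{t_1}\cup\cdots\cup K_{t_q})=S(t_0,-p,t_1,\dots,t_q)$ after collecting the singleton cliques into the independent part and the cliques of order $\ge 2$ into $C_1,\dots,C_q$. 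The degenerate case $D=\varnothing$ must be ruled out separately: there $\mathcal{A}(G)=2A_{\overline{G}}$, and the complete-multipartite argument collapses a connected $G$ to $K_n$, contradicting $\mathrm{diam}(G)=2$.

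For the sufficiency direction and the multiplicity count, I would start from $G\cong S(t_0,-p,t_1,\dots,t_q)$ and extract the non-positive eigenvalues by twins: the $t_0$ center vertices are co-duplicate with eccentricity $1$, the $p$ independent vertices are duplicate with eccentricity $2$, and each clique leaf $C_i$ consists of co-duplicate vertices with eccentricity $2$, so Lemma \ref{gx} delivers $-1$ with multiplicity $t_0-1$, $-2$ with multiplicity $p-1$, and $0$ with multiplicity $\sum_i(t_i-1)$. Equivalently, applying Lemma \ref{qjz} to the equitable partition $(D,B,C_1,\dots,C_q)$ gives $Spec_{\mathcal{A}}(G)=Spec(Q)\sqcup\{(-1)^{(t_0-1)},(-2)^{(p-1)},0^{(\sum(t_i-1))}\}$ for the $(q+2)\times(q+2)$ quotient matrix $Q$. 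Since the peeled eigenvalues are all $\le 0$, the positive eigenvalues of $\mathcal{A}(G)$ are precisely those of $Q$, and as $\mathrm{tr}\,\mathcal{A}(G)=0$ with $\mathcal{A}(G)\neq 0$ it suffices to bound the positive index of $Q$ above by $1$. Finally $m(-1)=t_0-1$ follows once $-1\notin Spec(Q)$, i.e. $\det(Q+I)\neq 0$, is verified.

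The hard part will be the inertia of the quotient $Q$ in the sufficiency direction. After symmetrising $Q$ and splitting off the negative contribution of $D$, one is left bordering the block $2A_{\overline{G[R]}}$ — which already has exactly one positive eigenvalue because $\overline{G[R]}$ is complete multipartite — by the dominating-clique row and column; a naive interlacing estimate only caps the count at two, so the delicate point is a Schur-complement (Haynsworth) computation that determines precisely when this bordering keeps the positive index at one, and it is here that the quantitative interplay between $t_0$ and the leaf data must be pinned down. I would expect this step, together with the degenerate bookkeeping (such as $t_0=1$, $p\le 1$, $q=0$, or a singular Schur complement) and the explicit choice of the diameter-reducing forbidden submatrix, to absorb most of the work.
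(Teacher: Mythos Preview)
This lemma is not proved in the present paper at all: it is a preliminary result quoted from \cite{wang3} and used as a black box (it appears only once, in the proof of Theorem~\ref{n-2}, to dispose of the single-positive-eigenvalue case). There is therefore no in-paper argument to set your proposal against.

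As an independent proof sketch your outline is reasonable, and the block form $\mathcal{A}(G)=\begin{pmatrix} J-I & J \\ J & 2A_{\overline{G[R]}}\end{pmatrix}$ for diameter-$2$ graphs with $D=V_1$, $R=V_2$ is exactly the right structural observation. But you correctly identify the real gap yourself: showing that the quotient $Q$ has positive index at most one is the entire content of the sufficiency direction, and bordering a matrix with one positive eigenvalue by a single row and column only gives an upper bound of two via interlacing. The Schur-complement step you defer is not just bookkeeping; it is where the theorem lives, and until it (together with the verification that $-1\notin Spec(Q)$, needed for $m(-1)=t_0-1$ exactly) is carried out, the sufficiency half and the multiplicity claim remain unproved. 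The degenerate cases you list ($t_0$ with $p+\sum t_i\le 1$ collapsing to $K_n$, $D=\varnothing$, $p\le 1$) also need explicit handling, since several of them make the partition $(D,B,C_1,\dots,C_q)$ ill-defined or trivial.
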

 
 
 \begin{lemma}{\rm(\!\!\cite{wei})}\label{ww}
 		Let $G$ be a connected graph with order $n$. Then $\xi_n(G)=-1$ if and only if $G\cong K_n$.
 \end{lemma}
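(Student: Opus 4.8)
The plan is to prove both implications separately, with the converse resting almost entirely on eigenvalue interlacing (Lemma \ref{pri}). Throughout I take $n \geq 2$, since for $n=1$ we have $\mathcal{A}(G)=(0)$ with $\xi_1 = 0 \neq -1$, so the equivalence is understood in the nondegenerate range.

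For the sufficiency ($G \cong K_n \Rightarrow \xi_n = -1$) I would simply compute $\mathcal{A}(K_n)$ directly. In $K_n$ every two distinct vertices are at distance $1$ and every eccentricity equals $1$, so the defining condition $d(u,v) = \min\{\varepsilon(u),\varepsilon(v)\} = 1$ holds for every off-diagonal entry; hence $\mathcal{A}(K_n) = J - I$. Its spectrum is $\{n-1,\ (-1)^{\,n-1}\}$ (for instance via Lemma \ref{qjz} applied to a single block with $s_{11}=1$ and $p_1=-1$), so $\xi_n(\mathcal{A}(K_n)) = -1$.

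For the necessity, assume $\xi_n(\mathcal{A}(G)) = -1$. The crucial step is to bound the individual entries of $\mathcal{A}(G)$. For any two vertices $u,v$ the corresponding $2\times 2$ principal submatrix $M^{*}$ of $\mathcal{A}(G)$ is $\left(\begin{smallmatrix} 0 & a \\ a & 0 \end{smallmatrix}\right)$ with $a = \epsilon_{uv}(G) \geq 0$, whose eigenvalues are $\pm a$. Applying Lemma \ref{pri} with $k = 2$ and $i = 2$ gives $-a = \xi_2(M^{*}) \geq \xi_n(\mathcal{A}(G)) = -1$, so $a \leq 1$. Since every nonzero entry of $\mathcal{A}(G)$ is a distance, hence a positive integer, this forces every entry of $\mathcal{A}(G)$ to lie in $\{0,1\}$. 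It then remains to translate ``all entries are $0$ or $1$'' into $G \cong K_n$, which I would do by contradiction: if $G$ is connected but not complete then $diam(G) = D \geq 2$; choosing a diametral pair $x,y$ with $d(x,y) = D$ and combining $\varepsilon(x),\varepsilon(y) \leq diam(G) = D$ with $\varepsilon(x),\varepsilon(y) \geq d(x,y) = D$ yields $\varepsilon(x) = \varepsilon(y) = D$. Hence $d(x,y) = D = \min\{\varepsilon(x),\varepsilon(y)\}$, so $\epsilon_{xy}(G) = D \geq 2$, contradicting the entry bound. Therefore $diam(G) \leq 1$, i.e. $G \cong K_n$.

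I expect the only delicate points to be bookkeeping rather than conceptual: correctly reading off the interlacing inequality in the form $\xi_2(M^{*}) \geq \xi_n(\mathcal{A}(G))$ for the smallest eigenvalues, and handling the degenerate order $n=1$. The conceptual heart of the argument is the single observation that a diametral pair always contributes an off-diagonal entry equal to the diameter, so any entry exceeding $1$ would force the least $\mathcal{A}$-eigenvalue strictly below $-1$; this is what makes the characterization so rigid and makes $K_n$ the unique extremal graph.
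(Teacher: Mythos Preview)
Your argument is correct. Note, however, that the paper does not supply its own proof of this statement: Lemma~\ref{ww} is quoted from \cite{wei} as a preliminary result, so there is no in-paper proof to compare against. Your approach via $2\times 2$ interlacing to bound every entry by $1$, followed by the observation that a diametral pair always realises an entry equal to the diameter, is the standard and cleanest route to this characterization; it is self-contained using only Lemma~\ref{pri} from the preliminaries.
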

 
 
  \begin{lemma}{\rm(\!\!\cite{wang})}\label{kp5}
 	Let $G\cong K_r\vee K_{n_1,n_2,\cdots,n_k}$ be a complete $(r+k)$-partite graph with  $n_1\geq n_2\geq \cdots n_k \geq 2$, $r\geq 1$ and $k\geq 2$. Then
$m(-1)=r-1$.
 \end{lemma}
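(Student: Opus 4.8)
The plan is to exploit the highly regular block structure of $\mathcal{A}(G)$ and reduce the computation of $m(-1)$ to a single $(k+1)\times(k+1)$ determinant. First I would compute the eccentricities. Since each vertex of the $K_r$ part is adjacent to every other vertex of $G$, these $r$ vertices are universal and have eccentricity $1$. A vertex lying in a part $U_i$ of $K_{n_1,\dots,n_k}$ is adjacent to all vertices outside $U_i$ but, because $n_i\ge 2$, has a non-neighbour inside $U_i$ at distance $2$; hence every such vertex has eccentricity $2$. Ordering the vertices as the $r$ vertices of $K_r$ followed by the parts $U_1,\dots,U_k$, the definition of $\mathcal{A}(G)$ then gives the block form
$$\mathcal{A}(G)=\begin{pmatrix} J_r-I_r & J_{r,n_1} & \cdots & J_{r,n_k}\\ J_{n_1,r} & 2(J_{n_1}-I_{n_1}) & & 0\\ \vdots & & \ddots & \\ J_{n_k,r} & 0 & & 2(J_{n_k}-I_{n_k})\end{pmatrix},$$
where the off-diagonal blocks between distinct parts $U_i,U_j$ vanish because such vertices are at distance $1\ne 2=\min\{\varepsilon,\varepsilon\}$.

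Next I would apply Lemma~\ref{qjz} with the partition into the $k+1$ cells above. Here $s_{11}=1,\,p_1=-1$; $s_{i+1,i+1}=2,\,p_{i+1}=-2$; $s_{1,j+1}=s_{i+1,1}=1$; and $s_{i+1,j+1}=0$ for $i\ne j$. Lemma~\ref{qjz} then yields
$$Spec(\mathcal{A}(G))=Spec(Q)\cup\left\{\begin{array}{cccc} -1 & -2 & \cdots & -2\\ r-1 & n_1-1 & \cdots & n_k-1\end{array}\right\},$$
so the copies of $-1$ contributed outside $Q$ number exactly $r-1$, while the $-2$'s never equal $-1$. Consequently $m(-1)=(r-1)+m_Q(-1)$, and it remains to prove that $-1$ is \emph{not} an eigenvalue of the quotient matrix
$$Q=\begin{pmatrix} r-1 & n_1 & \cdots & n_k\\ r & 2(n_1-1) & & 0\\ \vdots & & \ddots & \\ r & 0 & & 2(n_k-1)\end{pmatrix}.$$

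The crux — and the main obstacle — is to show $\det(Q+I)\ne 0$. Since $n_i\ge 2$ forces every diagonal entry $2n_i-1$ of the trailing block $D=\mathrm{diag}(2n_1-1,\dots,2n_k-1)$ of $Q+I$ to be positive, $D$ is invertible and the Schur-complement formula (applied to the border $a=r$, $b=(n_1,\dots,n_k)^T$, $c=r\,\vec e_k$) gives
$$\det(Q+I)=\Big(\prod_{i=1}^k(2n_i-1)\Big)\Big(r-r\sum_{i=1}^k\frac{n_i}{2n_i-1}\Big)=r\Big(\prod_{i=1}^k(2n_i-1)\Big)\Big(1-\sum_{i=1}^k\frac{n_i}{2n_i-1}\Big).$$
Each summand satisfies $\frac{n_i}{2n_i-1}>\frac12$, because $2n_i>2n_i-1$ and $2n_i-1>0$; as $k\ge 2$ this yields $\sum_{i=1}^k\frac{n_i}{2n_i-1}>k/2\ge 1$, so the last factor is strictly negative. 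With $r\ge 1$ and the product positive, $\det(Q+I)<0\ne 0$, whence $m_Q(-1)=0$ and $m(-1)=r-1$, as claimed. The only place any care is needed is this sign estimate; everything else is a direct application of Lemma~\ref{qjz}.
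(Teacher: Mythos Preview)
Your argument is correct. The paper itself does not prove this lemma; it merely cites it from \cite{wang}, so there is no in-paper proof to compare against. Your approach---writing $\mathcal{A}(G)$ in block form, invoking Lemma~\ref{qjz} to split the spectrum into $Spec(Q)\cup\{(-1)^{r-1},(-2)^{n_1-1},\dots,(-2)^{n_k-1}\}$, and then checking $\det(Q+I)\ne 0$ via a Schur complement---is exactly the style the paper uses for the companion result Lemma~\ref{r-1} (the case $k=1$), and the sign estimate $\sum_i n_i/(2n_i-1)>k/2\ge 1$ cleanly disposes of the only nontrivial step.
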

 \begin{lemma}{\rm(\!\!\cite{wei2})}\label{kp2}
 	Let $G\cong K_r\vee K_{n_1,n_2,\cdots,n_k}$ be a complete $(r+k)$-partite graph with order $n=r+\sum\limits_{i=1}^{k}n_i$, $n_1\geq n_2\geq \cdots n_k \geq 2$. Then
 	
 	{\rm \item(i)} if  $r,k\geq 1$, then $m(-2)=n-r-k$ if and only if $(r,k)\not\in \{(1,4),(2,3)\}$. Furthermore, if $n_{s}=\cdots=n_{s+t-1} (s\geq 1,s+t-1\leq k)$, then $m(2n_{s}-2)=t-1$.
 	
 	{\rm \item(ii)} if $r=0,k\geq 2$, then $Spec_{\mathcal{A}}(G)= \left\{
 	\begin{array}{cccc} -2&2n_1-2&\cdots&2n_k-2 \\ n-k&1&\cdots&1 \end{array}
 	\right\}$.
 \end{lemma}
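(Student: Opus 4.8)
The plan is to pin down the eccentricity matrix $\mathcal{A}(G)$ explicitly and then read its spectrum off the equitable quotient matrix provided by Lemma~\ref{qjz}. Write the vertex set as $C\cup V_1\cup\cdots\cup V_k$, where $C$ consists of the $r$ vertices of the $K_r$-part (each adjacent to every other vertex, hence of eccentricity $1$) and $V_i$ is the $i$-th part of size $n_i\ge 2$ (each of eccentricity $2$, its $n_i-1$ partners being the only vertices at distance $2$). Checking the four location-cases shows $\epsilon_{uv}=1$ whenever $u$ or $v$ lies in $C$, $\epsilon_{uv}=2$ when $u,v$ lie in the same $V_i$, and $\epsilon_{uv}=0$ when $u,v$ lie in distinct parts $V_i,V_j$. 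Thus, in the block form associated with $(C,V_1,\dots,V_k)$, the diagonal blocks are $J_r-I_r$ and $2(J_{n_i}-I_{n_i})$, the $C$--$V_i$ blocks are $J_{r,n_i}$, and the $V_i$--$V_j$ blocks ($i\neq j$) vanish.

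This matrix has exactly the shape required by Lemma~\ref{qjz}, with $s_{CC}=1,\ p_C=-1$; $s_{ii}=2,\ p_i=-2$; $s_{Ci}=s_{iC}=1$; and $s_{ij}=0$ for $i\neq j$. Applying the lemma gives
\begin{equation*}
Spec_{\mathcal{A}}(G)=Spec(Q)\cup\left\{\begin{array}{cc} -1 & -2 \\ r-1 & n-r-k \end{array}\right\},
\end{equation*}
where $Q$ is the $(k+1)\times(k+1)$ arrowhead matrix with diagonal $(r-1,\,2(n_1-1),\dots,2(n_k-1))$, first row $(r-1,n_1,\dots,n_k)$ and first column $(r-1,r,\dots,r)^{T}$. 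Since $n-r-k=\sum_i(n_i-1)$ is the guaranteed contribution of $-2$ from the non-quotient part, everything reduces to locating the values $-2$ and $2n_s-2$ inside the multiset $Spec(Q)$.

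For part (i), from $m(-2)=(n-r-k)+\operatorname{mult}_Q(-2)$ we get $m(-2)=n-r-k$ iff $-2\notin Spec(Q)$. I would evaluate $\det(Q+2I)$ by a Schur-complement reduction against the invertible tail $\operatorname{diag}(2n_1,\dots,2n_k)$, obtaining $\det(Q+2I)=\big(\prod_i 2n_i\big)\big((r+1)-\tfrac{rk}{2}\big)$. Hence $-2\in Spec(Q)$ iff $r(k-2)=2$, whose only solutions with $r,k\ge1$ are $(r,k)\in\{(1,4),(2,3)\}$, giving the stated equivalence. For the ``furthermore'' clause, since $2n_s-2\ge2$ equals neither $-1$ nor $-2$ we have $m(2n_s-2)=\operatorname{mult}_Q(2n_s-2)$, which I would compute by a direct eigenvector analysis of $\mathcal{A}(G)$. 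The row equation on a part $V_i$ forces any eigenvector for $2n_s-2$ to be constant, say $a_i$, on $V_i$ and constant, say $b$, on $C$, subject to $2(n_s-n_i)a_i=rb$ for each $i$ together with one further equation coming from $C$. Evaluating the first relation on one of the $t$ parts equal to $n_s$ forces $rb=0$, hence $b=0$ (as $r\ge1$); then $a_i=0$ on every part with $n_i\neq n_s$, and the $C$-equation reduces to $\sum a_i=0$ over the equal parts, leaving an eigenspace of dimension exactly $t-1$.

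For part (ii), with $r=0$ the block $C$ disappears, $\mathcal{A}(G)=\bigoplus_i 2(J_{n_i}-I_{n_i})$ is block-diagonal, and Lemma~\ref{qjz} (now with $Q=\operatorname{diag}(2n_1-2,\dots,2n_k-2)$) immediately yields $-2$ with multiplicity $\sum_i(n_i-1)=n-k$ together with the eigenvalues $2n_i-2$, which is the claimed spectrum. The main obstacle is the exact multiplicity of $2n_s-2$ in part (i): exhibiting $t-1$ eigenvectors is routine, but ruling out any further eigenvector — equivalently, showing $\det(Q-\lambda I)$ vanishes to order exactly $t-1$ at $\lambda=2n_s-2$ — is where the computation must be handled carefully, and it is precisely the presence of the universal $C$-vertices with $r\ge1$ that collapses the extra would-be eigenvectors.
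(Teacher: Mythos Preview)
The paper does not prove this lemma; it is quoted from \cite{wei2} (Wei--Li) and used as a black box. So there is no ``paper's own proof'' to compare against.

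Your argument is essentially correct and is, in fact, the natural one: compute $\mathcal{A}(G)$ block-wise, feed it into Lemma~\ref{qjz}, and then analyse the small quotient matrix $Q$. The Schur-complement computation of $\det(Q+2I)=\big(\prod_i 2n_i\big)\big(r+1-\tfrac{rk}{2}\big)$ is right, and $r(k-2)=2$ indeed has only the solutions $(1,4)$ and $(2,3)$ for $r,k\ge 1$. For the multiplicity of $2n_s-2$ your eigenvector calculation is fine; one point you gloss over but which makes the reduction to $Q$ legitimate is that any eigenvector of $\mathcal{A}(G)$ for an eigenvalue $\lambda\notin\{-1,-2\}$ is automatically constant on each block: subtracting two row-equations within $V_i$ gives $(\lambda+2)(x_u-x_v)=0$, and within $C$ gives $(\lambda+1)(x_u-x_v)=0$. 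Once that is said, your constraints $rb=2(n_s-n_i)a_i$ together with the $C$-row $\sum_i n_i a_i=(2n_s-1-r)b$ force $b=0$, $a_i=0$ for $n_i\ne n_s$, and $\sum a_i=0$ over the $t$ equal parts, so the eigenspace has dimension exactly $t-1$. The ``main obstacle'' you flag is therefore already handled by your own argument; there is no further subtlety.
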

 
 \section{\textbf{The proof of Theorem \ref{1}}}

    \hspace{1.5em} In this section, we will prove Theorem \ref{1}.
   
   \begin{lemma}\label{mz}
   Let $A=(a_{ij})_{n\times n}$ be a real symmetric matrix, $a(\geq2)$ be an integer, $a_{ij}=\begin{cases}
   	 s_{ij}a,&\ i\neq j ;\\
   	1,&\ i=j,
   \end{cases}$ where $s_{ij}\in \{0,1\}$. Then $rank(A)=n$.
      \end{lemma}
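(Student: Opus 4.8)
The plan is to write $A = I_n + aB$, where $B=(s_{ij})$ is the symmetric $0$-$1$ matrix with zero diagonal coming from the off-diagonal pattern of $A$ (equivalently, $B$ is the adjacency matrix of a graph on $n$ vertices), and then to prove that $A$ is nonsingular.

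The cleanest route is a congruence argument modulo $a$. Every off-diagonal entry of $A$ is a multiple of $a$ and every diagonal entry equals $1$, so $A \equiv I_n \pmod{a}$ entrywise. Since $\det A$ is a polynomial with integer coefficients in the entries of $A$, reducing modulo $a$ gives $\det A \equiv \det I_n = 1 \pmod{a}$. As $a \geq 2$, this forces $\det A \neq 0$, hence $\operatorname{rank}(A) = n$.

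It is worth recording an equivalent eigenvalue-theoretic phrasing, since it connects with the spectral language used elsewhere in the paper: $\det(I_n + aB) = (-a)^n \, \chi_B(-1/a)$, where $\chi_B$ denotes the characteristic polynomial of $B$, so $A$ is singular if and only if $-1/a$ is an eigenvalue of $B$. But $B$ is an integer matrix, so $\chi_B$ is monic with integer coefficients, and by the rational root theorem its only rational roots are integers. Since $a \geq 2$, the number $-1/a$ is rational but not an integer, hence not an eigenvalue of $B$, and again $A$ is nonsingular.

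There is essentially no obstacle here; the single point that uses the full hypothesis is that $a$ is an integer with $a\ge 2$, which is exactly what makes $\det A \equiv 1 \pmod a$ rule out $\det A = 0$. Note also that the symmetry of $A$ is irrelevant to the rank claim itself and is needed only for the later applications, where $A$ will arise as (a principal block of) an eccentricity matrix.
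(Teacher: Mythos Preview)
Your proof is correct, and considerably cleaner than the paper's. The paper proves the lemma by an explicit Gaussian elimination: it row-reduces $A$ in $n-1$ steps and tracks the pivots, showing inductively that after step $k$ the $(i,i)$ entry (for $i>k$) has the form $\dfrac{1-s^{(k)}_{ii}a}{1-s^{(k)}_{kk}a}$ with $s^{(k)}_{ii}\in\mathbb{Z}$, which is nonzero because $a\ge 2$ forces $1-sa\ne 0$ for any integer $s$; hence the echelon form has full rank. Your congruence argument $\det A\equiv \det I_n\equiv 1\pmod a$ (or, equivalently, the rational-root observation that $-1/a$ cannot be an eigenvalue of the integer matrix $B$) extracts exactly the same arithmetic obstruction in one line and avoids the step-by-step bookkeeping. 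Both approaches use the hypothesis in the same place---the integrality of $a\ge 2$---but yours is shorter and conceptually clearer; the paper's version has the minor advantage of exhibiting the pivot structure explicitly, which is not needed anywhere downstream.
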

      
      \begin{proof}
      	Firstly, we transform $A$ to a row echelon form by Gauss transformation, which takes $n-1$ steps.
      	
      	\textbf{Step 1}: Add $-a_{i1}$ times the first row to the $i$-th row of $A$ for $2\leq i\leq n$, and we obtain
      	 $A_1=(a^{(1)}_{ij})_{n\times n}$, where $a^{(1)}_{ij}=\begin{cases}
      	 	1,&\ i= j=1 ;\\
      	 	1-s^{(1)}_{ii}a,&\ 2\leq i=j\leq n;\\
      	 	0,&2\leq i\leq n, j=1;\\
      	 	s^{(1)}_{ij}a,&i\neq j, 2\leq i,j\leq n,
      	 \end{cases}$ 
      	 $s^{(1)}_{ii}=s^2_{1i}a$ for $2\leq i\leq n$, $s^{(1)}_{ij}=s_{ij}-s_{i1}s_{1j}a$ for $i\neq j$ and $ 2\leq i,j\leq n$. 
      	 
      	  Clearly, for $2\leq i\leq n$, $a^{(1)}_{ii}=1-s^{(1)}_{ii}a\neq 0$ by $s^{(1)}_{ii}$ is an integer and $a\geq 2$.
      	  
      	  \textbf{Step 2}: Add $-\frac{s^{(1)}_{i2}a}{1-s^{(1)}_{22}a}$ times the second row to the $i$-th row of $A_1$ for $3\leq i\leq n$, and we obtain $A_2=(a^{(2)}_{ij})_{n\times n}$, where $a^{(2)}_{ij}=\begin{cases}
      	  	a^{(1)}_{ij},&\ i\leq 2\ \text{or}\ j=1 ;\\
      	  	0,&\ 3\leq i\leq n,j=2;\\
      	  	\frac{1-s^{(2)}_{ii}a}{1-s^{(2)}_{22}a},&3\leq i=j\leq n;\\
      	  	\frac{s^{(2)}_{ij}a}{1-s^{(2)}_{22}a},&i\neq j, 3\leq i,j\leq n,
      	  \end{cases}$ $s^{(2)}_{22}=s^{(1)}_{22}$, $s^{(2)}_{ii}=s^{(1)}_{i2}s^{(1)}_{2i}a+s^{(1)}_{22}+s^{(1)}_{ii}-s^{(1)}_{ii}s^{(1)}_{22}a$ for $3\leq i\leq n$,  $s^{(2)}_{ij}=s^{(1)}_{ij}-s^{(1)}_{ij}s^{(1)}_{22}a-s^{(1)}_{i2}s^{(1)}_{2j}a$ for $i\neq j$ and $ 3\leq i,j\leq n $.
      	  
      	  Clearly, for $3\leq i\leq n$, $a^{(2)}_{ii}=\frac{1-s^{(2)}_{ii}a}{1-s^{(2)}_{22}a}\neq 0$ by $s^{(2)}_{ii}$ is an integer and $a\geq 2$.

      	  \textbf{Step 3}: Add $-\frac{s^{(2)}_{i3}a}{1-s^{(2)}_{33}a}$ times the third row to the $i$-th row of $A_2$ for $4\leq i\leq n$, 
      	   and we obtain $A_3=(a^{(3)}_{ij})_{n\times n}$,  where $a^{(3)}_{ij}=\begin{cases}
      	   	a^{(2)}_{ij},&\ i\leq 3\ \text{or}\ j\leq 2 ;\\
      	   	0,&\ 4\leq i\leq n,j=3;\\
      	   	\frac{1-s^{(3)}_{ii}a}{1-s^{(3)}_{33}a},&4\leq i=j\leq n;\\
      	   	\frac{s^{(3)}_{ij}a}{1-s^{(3)}_{33}a},&i\neq j, 4\leq i,j\leq n,
      	   \end{cases}$
      	  $s^{(3)}_{33}=s^{(2)}_{22}+s^{(2)}_{33}-s^{(2)}_{22}s^{(2)}_{33}a$, $s^{(3)}_{ii}=s^{(2)}_{i3}s^{(2)}_{3i}a+s^{(2)}_{33}+s^{(2)}_{ii}-s^{(2)}_{ii}s^{(2)}_{33}a$  for $4\leq i\leq n$,  $s^{(3)}_{ij}=s^{(2)}_{ij}-s^{(2)}_{ij}s^{(2)}_{33}a-s^{(2)}_{i3}s^{(2)}_{3j}a$ for $i\neq j$ and $ 4\leq i,j\leq n$.
      	  
      	  Clearly, for $4\leq i\leq n$, $a^{(3)}_{ii}=\frac{1-s^{(3)}_{ii}a}{1-s^{(3)}_{33}a}\neq 0$ by $s^{(3)}_{ii}$ is an integer and $a\geq 2$.
      	  
      	  Continuing the above steps, we get row echelon form $A_{n-1}=(a^{(n-1)}_{ij})_{n\times n}$ of $A$. Clearly,  we have $a^{(n-1)}_{ii}=\begin{cases}
      	  	1,&\ i=1 ;\\
      	  	a^{(i-1)}_{ii},&\ 2\leq i\leq n,
      	  \end{cases}$ which implies $a^{(n-1)}_{ii}\neq 0$ for $1\leq i\leq n$. Thus $rank(A)=rank(A_{n-1})=n$.
      \end{proof}
\begin{lemma}\label{qm}
		Let $M$ be an $ n\times n$ real symmetry matrix, $M^*$ be its $k\times k$  principal submatrix,   
		$m_{M^*}(\xi)=s$ with $s\geq 0$. Then $m_{M}(\xi)\leq n-k+s$.
\end{lemma}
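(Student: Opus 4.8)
The statement to prove is Lemma~\ref{qm}: if $M$ is $n\times n$ real symmetric, $M^*$ is a $k\times k$ principal submatrix, and $\xi$ has multiplicity $s\ge 0$ in $M^*$, then $m_M(\xi)\le n-k+s$.

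My plan is to apply the Cauchy interlacing inequalities from Lemma~\ref{pri} directly. Suppose for contradiction that $m_M(\xi)\ge n-k+s+1$. Order the eigenvalues of $M$ as $\xi_1(M)\ge\cdots\ge\xi_n(M)$ and of $M^*$ as $\xi_1(M^*)\ge\cdots\ge\xi_k(M^*)$. Since $\xi$ has multiplicity at least $n-k+s+1$ in $M$, there is a block of at least $n-k+s+1$ consecutive indices $j$ with $\xi_j(M)=\xi$; say $\xi_p(M)=\xi_{p+1}(M)=\cdots=\xi_q(M)=\xi$ with $q-p\ge n-k+s$. The interlacing bound $\xi_i(M)\ge\xi_i(M^*)\ge\xi_{n-k+i}(M)$ holds for $i\in\{1,\dots,k\}$.

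The key step is to locate enough indices $i\le k$ for which interlacing forces $\xi_i(M^*)=\xi$. If $i$ satisfies $i\le q$ and $n-k+i\ge p$, i.e. $p-(n-k)\le i\le q$, then $\xi_i(M^*)$ is squeezed between $\xi_i(M)=\xi$ (need $i\ge p$, which follows since $i\ge p-(n-k)$ and... careful) — more precisely, I want $\xi_i(M)=\xi$ and $\xi_{n-k+i}(M)=\xi$ simultaneously, which holds when $p\le i\le q$ and $p\le n-k+i\le q$, i.e. $\max\{p,\,p-(n-k)\}\le i\le\min\{q,\,q-(n-k)\}$; since $n-k\ge 0$ this is $p\le i\le q-(n-k)$. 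The number of such integers $i$ is $(q-(n-k))-p+1=(q-p)-(n-k)+1\ge (n-k+s)-(n-k)+1=s+1$. Also these $i$ lie in $\{1,\dots,k\}$ provided $q-(n-k)\le k$, which holds because $q\le n$. So we get at least $s+1$ indices $i$ with $\xi_i(M^*)=\xi$, i.e. $m_{M^*}(\xi)\ge s+1$, contradicting $m_{M^*}(\xi)=s$. Hence $m_M(\xi)\le n-k+s$.

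The main obstacle is purely bookkeeping: making sure the index range $\{p,\dots,q-(n-k)\}$ is nonempty, consists of valid indices $\le k$, and genuinely yields $s+1$ distinct eigenvalue positions of $M^*$ equal to $\xi$. I will write this out carefully, but there is no conceptual difficulty beyond a clean application of Lemma~\ref{pri}; the edge cases $s=0$ and $k=n$ are handled automatically by the same counting. An alternative, perhaps cleaner, route is: the eigenspace of $M$ for $\xi$ has dimension $m_M(\xi)$, and restricting the coordinate projection onto the $k$ coordinates of $M^*$ to this eigenspace has kernel of dimension at most $n-k$; the image sits inside a space on which $M^*$ acts with eigenvalue $\xi$ on a subspace of dimension $m_M(\xi)-(n-k)$ after a short argument — but the interlacing proof above is the most direct and self-contained, so I would present that.
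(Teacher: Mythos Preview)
Your proof is correct and uses the same tool as the paper's own proof, namely the Cauchy interlacing inequalities of Lemma~\ref{pri}. The paper argues directly---locating $\xi$ in the spectrum of $M^*$ and using interlacing to count how many eigenvalues of $M$ lie strictly above and below $\xi$, with a case split on $s=0$ versus $s\ge 1$---whereas your contradiction argument runs the same idea in the other direction (from $M$ to $M^*$) and has the minor advantage of handling all cases uniformly.
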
      
    \begin{proof}
    	We will complete the proof by the following three cases.
    	
    	\textbf{Case 1}: $m_{M}(\xi)=0$.
    	
    	Clearly, $m_{M}(\xi)\leq n-k+s$ by $0\leq s\leq k\leq n$.
    	
    	\textbf{Case 2}: $m_{M}(\xi)>0$ and $s\geq 1$.

   Without loss of generality, we take $ \xi_{t+1}(M^*)=\cdots=\xi_{t+s}(M^*)=\xi$ for $0\leq t\leq k-s$. 
   
   If $1\leq t\leq k-s-1$, then by Lemma \ref{pri}, we have $\xi_1(M)\geq\xi_2(M)\geq\cdots\geq\xi_t(M)\geq\xi_t(M^*)>\xi>\xi_{t+s+1}(M^*)\geq \xi_{n-k+t+s+1}(M)\geq \cdots \geq \xi_{n-1}(M) \geq \xi_n(M)$, which implies $m_{M}(\xi)\leq n-t-[n-(n-k+t+s+1)+1]=n-k+s$. 
   
   If $t=0$ or $t=k-s$, similar to the above proof, we have $m_{M}(\xi)\leq n-k+s$.
   
   \textbf{Case 3}: $m_{M}(\xi)>0$ and $s=0$.
    	
    	 By $s=0$, we have $\xi>\xi_{1}(M^*)$ or $ \xi_{1}(M^*)>\xi> \xi_{k}(M^*)$  or $\xi<\xi_{k}(M^*)$. 
    	 
    	 If $ \xi_{1}(M^*)>\xi> \xi_{k}(M^*)$, then there exists $1\leq l\leq k-1$ satisfying $ \xi_{l}(M^*)>\xi> \xi_{l+1}(M^*)$. By Lemma \ref{pri}, we have $\xi_1(M)\geq\xi_2(M)\geq\cdots\geq\xi_l(M)\geq\xi_l(M^*)>\xi>\xi_{l+1}(M^*)\geq \xi_{n-k+l+1}(M)\geq \cdots \geq \xi_{n-1}(M)\geq \xi_n(M)$, which implies $m_{M}(\xi)\leq n-l-[n-(n-k+l+1)+1]=n-k$. 
    	 
    	 If $\xi>\xi_{1}(M^*)$ or $\xi<\xi_{k}(M^*)$, similar to the above proof, we have  $m_{M}(\xi)\leq n-k$.
    \end{proof} 
    \vskip0.15cm
    
    The following Proposition is very  useful, but is so easy that we ignore its proof.

      \begin{proposition}\label{p1}
      	Let $G$ be a connected graph with $diam(G)=d$, $V_i=\{v\in V(G)|\varepsilon(v)=i\}$. Then  
      	
      	{\rm \item(i)} $V(G)=V_1\cup V_2\cup \cdots \cup V_{d}$ is a partition of the vertex set of $G$;
      	
      	 {\rm \item(ii)} if $V_1\neq \emptyset $, then $1\leq d\leq2$;
      	 
      	 {\rm \item(iii)}  $d=1$ if and only if $G\cong K_n$.
      \end{proposition}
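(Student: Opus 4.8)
The plan is simply to unwind the definitions of eccentricity and diameter; each of the three parts is a one- or two-line consequence, so no heavy machinery is needed. Throughout I use the paper's standing convention that $G$ is connected (and, for the statement to be non-degenerate, that $n\geq 2$, so that $diam(G)\geq 1$ and every vertex has eccentricity at least $1$).

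For part (i), I would first record that for every $v\in V(G)$ one has $1\leq \varepsilon(v)\leq d$: the lower bound because $G$ has at least one other vertex, which lies at distance $\geq 1$ from $v$, and the upper bound because $d(u,v)\leq diam(G)=d$ for all $u\in V(G)$. Since $\varepsilon$ assigns to each vertex a single well-defined integer in $\{1,\dots,d\}$, the sets $V_1,\dots,V_d$ are pairwise disjoint with union $V(G)$; moreover $V_d\neq\emptyset$, because choosing a diametral pair $u,w$ with $d(u,w)=d$ forces $\varepsilon(u)=d$. Hence $V(G)=V_1\cup V_2\cup\cdots\cup V_d$ is a partition of $V(G)$.

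For part (ii), assume $V_1\neq\emptyset$ and fix $v$ with $\varepsilon(v)=1$; then $v$ is adjacent to every other vertex of $G$. For any $x,y\in V(G)$ the triangle inequality for the graph metric gives $d(x,y)\leq d(x,v)+d(v,y)\leq 1+1=2$, so $d=diam(G)\leq 2$, and together with $d\geq 1$ this yields $1\leq d\leq 2$. For part (iii), $d=1$ means every two distinct vertices are at distance exactly $1$, i.e.\ adjacent, which is precisely the condition $G\cong K_n$; the converse is immediate since $K_n$ has diameter $1$.

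There is no real obstacle here — the only point to be slightly careful about is the implicit hypothesis $n\geq 2$, which guarantees $d\geq 1$ and that eccentricities are positive; this is harmless in the applications of the proposition, where $n$ is taken large. This is exactly why the authors remark that the statement is "so easy that we ignore its proof."
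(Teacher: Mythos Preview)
Your proof is correct; each of the three parts follows directly from the definitions of eccentricity and diameter, exactly as you argue. The paper itself provides no proof of this proposition (the authors explicitly write that it is ``so easy that we ignore its proof''), so there is nothing to compare against, and your observation about the implicit hypothesis $n\geq 2$ is a fair caveat that the paper leaves unstated.
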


      \begin{lemma}\label{qb1}
      	Let $G (\not\cong K_n)$ be a connected graph with order $n$, $V_1\neq \emptyset$, $m(-1)=n-k$.  Then $|V_1|=n-k+1$ or $n-k$.
      \end{lemma}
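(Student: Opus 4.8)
Since $G \not\cong K_n$ and $V_1 \neq \emptyset$, Proposition \ref{p1}(ii)-(iii) gives $\mathrm{diam}(G) = 2$, so every vertex has eccentricity $1$ or $2$ and $V(G) = V_1 \cup V_2$ with $V_2 \neq \emptyset$. A vertex $v$ lies in $V_1$ precisely when it is adjacent to all other vertices; hence $G \cong K_{|V_1|} \vee G'$ where $G' = G[V_2]$ is a graph on $n - |V_1|$ vertices with $\mathrm{diam}(G') \geq 2$ (in particular $G'$ has no dominating vertex, so $n - |V_1| \geq 2$, i.e.\ $|V_1| \leq n-2$). The first step is to pin down the structure of $\mathcal{A}(G)$ with respect to the partition $V_1 \cup V_2$: for $u \in V_1$, $\varepsilon(u) = 1$, so the only entries $\epsilon_{uw}(G)$ that survive are those with $d(u,w) = 1 = \min\{\varepsilon(u),\varepsilon(w)\}$, but since $d(u,w) \geq 1$ always and $\varepsilon(w) \geq 1$, actually $\epsilon_{uw} = d(u,w)$ whenever $d(u,w) = 1$; for the pair $u,u' \in V_1$ we get $\epsilon_{uu'} = 1$, and for $u \in V_1$, $w \in V_2$ we get $\epsilon_{uw} = 1$. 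So the block of $\mathcal{A}(G)$ indexed by $V_1$ together with its interaction with $V_2$ is exactly the all-ones pattern with zero diagonal, i.e.\ $J - I$ on $V_1$ and $J$ between $V_1$ and $V_2$; only the $V_2 \times V_2$ block $\mathcal{A}'$ is "nonstandard" (its entries are $0$ or $2$, since within $G'$ the relevant distances that survive for eccentricity-$2$ vertices are the $2$'s). Thus, writing $t = |V_1|$,
$$\mathcal{A}(G) = \begin{pmatrix} J_t - I_t & J_{t,n-t} \\ J_{n-t,t} & \mathcal{A}' \end{pmatrix}.$$

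**Getting the bounds.** To bound $m(-1)$ from above, I would exhibit a principal submatrix of $\mathcal{A}(G)$ that controls the multiplicity via Lemma \ref{qm}. The natural candidate is the $V_2$-block $\mathcal{A}'$, which is a $(n-t)\times(n-t)$ matrix with all entries in $\{0,2\}$ and zero diagonal; here Lemma \ref{mz} does not directly apply (the diagonal is $0$, not $1$), but $\mathcal{A}' + I$ has the form covered by Lemma \ref{mz} with $a = 2$, so $\mathcal{A}' + I$ is nonsingular, i.e.\ $-1$ is \emph{not} an eigenvalue of $\mathcal{A}'$, hence $m_{\mathcal{A}'}(-1) = 0$. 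Applying Lemma \ref{qm} with $M = \mathcal{A}(G)$, $M^* = \mathcal{A}'$, $k = n - t$, $s = 0$ yields $m(-1) \leq n - (n-t) = t$. Combined with $m(-1) = n - k$, this gives $n - k \leq t$, i.e.\ $t \geq n - k$, so $|V_1| \geq n - k$.

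**The upper bound on $|V_1|$.** For the reverse inequality $|V_1| \leq n - k + 1$, I would argue that $t = |V_1|$ cannot exceed $n - k + 1$. Consider the principal submatrix $M^*$ obtained by deleting one vertex of $V_1$: it is the eccentricity-matrix pattern on $(t-1) + (n-t)$ vertices of the same block form, and its $V_1'$-part is $J_{t-1} - I_{t-1}$. Now I claim $m_{M^*}(-1) \geq t - 2$: the $t-1$ vertices of $V_1'$ are mutually adjacent and each has eccentricity $1$ in $G$, so they are co-duplicate with $\varepsilon = 1$, and Lemma \ref{gx} gives an eigenvalue $-1$ of multiplicity at least $(t-1) - 1 = t - 2$ (one must check the co-duplicate hypothesis is inherited by the relevant submatrix structure — this is where I expect the only real friction). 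Then Lemma \ref{pri}/\ref{qm} run in the other direction, or more directly: using Lemma \ref{gx} on $G$ itself, the $t$ co-duplicate vertices of $V_1$ force $m(-1) \geq t - 1$, so $n - k \geq t - 1$, i.e.\ $t \leq n - k + 1$. Together with $t \geq n - k$ from the previous paragraph, we conclude $|V_1| \in \{n-k, n-k+1\}$.

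**Main obstacle.** The delicate point is verifying that the vertices of $V_1$ really are "co-duplicate" in the precise technical sense required by Lemma \ref{gx} — a priori two vertices of $V_1$ are adjacent and have identical closed neighborhoods $V(G)$, which is exactly co-duplication, so this should be immediate; the subtlety is only in confirming that Lemma \ref{gx} applies when $|V_1|$ could be as small as $1$ (giving a vacuous bound) and reconciling the two one-sided estimates cleanly. I do not anticipate needing heavy computation — the whole argument is a bookkeeping exercise with the block structure of $\mathcal{A}(G)$, Lemma \ref{mz} to kill $-1$ from the $V_2$-block, and Lemma \ref{gx} to produce $-1$'s from the $V_1$-block.
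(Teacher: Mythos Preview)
Your proposal is correct and follows essentially the same route as the paper: the upper bound $|V_1|\le n-k+1$ comes from Lemma~\ref{gx} applied to the co-duplicate vertices of $V_1$ (giving $m(-1)\ge |V_1|-1$), and the lower bound $|V_1|\ge n-k$ comes from observing that the $V_2$-principal submatrix of $\mathcal{A}(G)+I$ satisfies the hypotheses of Lemma~\ref{mz} with $a=2$, hence is nonsingular, so Lemma~\ref{qm} yields $m(-1)\le n-|V_2|=|V_1|$. Your detour through the one-vertex-deleted submatrix $M^*$ in the upper-bound paragraph is unnecessary (you correctly discard it in favor of the direct Lemma~\ref{gx} argument), and your ``main obstacle'' is a non-issue, as you yourself observe.
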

      \begin{proof}
      	Clearly, $|V_1|\leq n-k+1$ by $V_1\neq \emptyset$,  $m(-1)=n-k$ and Lemma \ref{gx}. Now we show  $|V_1|\geq n-k$.
      	
      	By $G \not\cong K_n$, we have $d\geq 2$, and $d\leq 2$ by $V_1\neq \emptyset$. Then $d=2$ and $V(G)=V_1\cup V_2$ is a partition of $V(G)$.

      	  If $|V_1|\leq n-k-1$, then  $|V_2|\geq k+1$ by $|V_1|+|V_2|=n$. Let $B$ be the principal submatrix of $\mathcal{A}(G)+I$ indexed by $V_2$. Then $rank(B)=|V_2|$  by Lemma \ref{mz}, and thus $m_{B-I}(-1)=m_{B}(0)=|V_2|-rank(B)=0$ by matrix theory. Therefore, $m(-1)\leq n-|V_2|+m_{B-I}(-1f)=n-|V_2|\leq n-k-1$ by Lemma \ref{qm}, which implies a contradiction.
      \end{proof}
      
      \begin{lemma}\label{qb2}
      	Let $G$ be a connected graph with order $n$ and $diam(G)=d$. If $V_1=\emptyset$, and $n\geq k(d-1)+1$, then $m(-1)\leq n-k-1$.
      \end{lemma}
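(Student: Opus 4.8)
The plan is to reduce the general diameter-$d$ case to the structure already exploited in Lemma \ref{qb1}, namely finding inside $G$ a large set of vertices on which the matrix $\mathcal{A}(G)+I$, restricted as a principal submatrix, is forced to be nonsingular, and then to invoke Lemma \ref{qm}. First I would partition $V(G)=V_2\cup V_3\cup\cdots\cup V_d$ using Proposition \ref{p1}(i) (here $V_1=\emptyset$ and $d\geq 2$). The key observation is that within a single eccentricity class $V_i$, if two vertices $u,v\in V_i$ satisfy $d(u,v)=i=\min\{\varepsilon(u),\varepsilon(v)\}$, then the $(u,v)$-entry of $\mathcal{A}(G)$ equals $i\geq 2$, while the diagonal entry of $\mathcal{A}(G)+I$ is $1$; entries between vertices whose distance is not the common minimum eccentricity are $0$. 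So the principal submatrix $B$ of $\mathcal{A}(G)+I$ indexed by any set $S$ of vertices all having the \emph{same} eccentricity $i\geq 2$ has exactly the form required by Lemma \ref{mz} (diagonal $1$, off-diagonal entries $s_{uv}\cdot i$ with $s_{uv}\in\{0,1\}$, and $a=i\geq 2$), hence $\mathrm{rank}(B)=|S|$ and $m_{B-I}(-1)=m_B(0)=0$.

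Next I would count. Since $|V_2|+|V_3|+\cdots+|V_d|=n$ and there are $d-1$ classes, some class $V_i$ has $|V_i|\geq \lceil n/(d-1)\rceil$. The hypothesis $n\geq k(d-1)+1$ gives $n/(d-1)> k$, hence $|V_i|\geq k+1$. Taking $S=V_i$ and $B$ the corresponding principal submatrix of $\mathcal{A}(G)+I$ of order $|S|\geq k+1$, Lemma \ref{mz} yields $\mathrm{rank}(B)=|S|$, so $m_{B-I}(-1)=0$. Applying Lemma \ref{qm} to $M=\mathcal{A}(G)$, $M^*=B-I$ (a principal submatrix of order $|S|$), $\xi=-1$, $s=0$, we obtain
\begin{equation*}
m(-1)=m_{\mathcal{A}(G)}(-1)\leq n-|S|+0\leq n-(k+1)=n-k-1,
\end{equation*}
which is exactly the claimed bound.

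The one point that needs care — and the main thing to verify rather than a genuine obstacle — is the claim that the off-diagonal entries of $B$ really are of the form $s_{uv}\cdot i$ with a single value $a=i$: this is immediate because all vertices in $V_i$ share eccentricity $i$, so for any two of them the running minimum $\min\{\varepsilon(u),\varepsilon(v)\}$ equals $i$, and the $\mathcal{A}$-entry is either $d(u,v)=i$ (when $d(u,v)=i$) or $0$ (when $d(u,v)<i$, since then $d(u,v)\neq\min\{\varepsilon(u),\varepsilon(v)\}$); note $d(u,v)\leq i$ always. Since $i\geq 2$ (as $V_1=\emptyset$ forces every eccentricity class in play to have index $\geq 2$), the hypothesis $a\geq 2$ of Lemma \ref{mz} is met. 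Everything else is the bookkeeping above, so no step should be hard.
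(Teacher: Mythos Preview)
Your proposal is correct and follows essentially the same route as the paper: partition $V(G)$ into eccentricity classes $V_2,\ldots,V_d$, use pigeonhole from $n\geq k(d-1)+1$ to find some $V_i$ with $|V_i|\geq k+1$, apply Lemma \ref{mz} to the principal submatrix of $\mathcal{A}(G)+I$ indexed by $V_i$ to get full rank, and conclude via Lemma \ref{qm}. Your write-up is in fact slightly more careful than the paper's, since you explicitly verify why the off-diagonal entries of $B$ are all either $0$ or the common value $i\geq 2$, which the paper simply takes for granted.
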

      \begin{proof}
      	 By $V_1=\emptyset$ and $diam(G)=d$, we have $V(G)=V_2\cup V_3\cup\cdots\cup V_d$ is a partition of the vertex set of $G$. Clearly, there exists $|V_i|\geq k+1$ by $n\geq k(d-1)+1$. 
      	
      	Let $B$ be the principal submatrix of $\mathcal{A}(G)+I$ indexed by $V_i$. Then $rank(B)=|V_i|$ by Lemma \ref{mz}, and thus $m_{B-I}(-1)=m_{B}(0)=|V_i|-rank(B)= 0$  by matrix theory. Therefore, $m(-1)\leq n-|V_i|+m_{B-I}(-1)=n-|V_i|\leq n-k-1$ by Lemma \ref{qm}.
      \end{proof}
      
      \begin{theorem}\label{n-1}
      	Let $G$ be a connected graph with order $n$. Then $m(-1)=n-1$ if and only if $G\cong K_n$. In addition, $G$ is determined by its $\mathcal{A}$-spectrum.
      \end{theorem}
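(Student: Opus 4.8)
The plan is to reduce the whole statement to Lemma~\ref{ww}, which already identifies the connected graphs whose least $\mathcal{A}$-eigenvalue is $-1$. First I would dispose of the trivial case $n=1$ (then $\mathcal{A}(K_1)=(0)$, $m(-1)=0=n-1$, and $G\cong K_1$), and from then on assume $n\geq 2$.

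For sufficiency, if $G\cong K_n$ then $diam(G)=1$, so all off-diagonal entries of the distance matrix equal $1$ and every vertex has eccentricity $1$; hence $\mathcal{A}(K_n)=J_n-I_n$. Computing its spectrum directly, or via Lemma~\ref{qjz} applied to the one-block partition (with $s_{11}=1$, $p_1=-1$), gives $Spec_{\mathcal{A}}(K_n)=\left\{\begin{array}{cc} n-1 & -1 \\ 1 & n-1\end{array}\right\}$, so $m(-1)=n-1$.

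For necessity, suppose $m(-1)=n-1$. Since $d(v,v)=0<\varepsilon_G(v)$ for every $v$ (as $n\geq 2$), the diagonal of $\mathcal{A}(G)$ is zero, so the sum of the $\mathcal{A}$-eigenvalues is $0$; as $n-1$ of them equal $-1$, the remaining one must equal $n-1$, which is positive. Hence $Spec_{\mathcal{A}}(G)=\left\{\begin{array}{cc} n-1 & -1 \\ 1 & n-1\end{array}\right\}$, and in particular $\xi_n(G)=-1$, so $G\cong K_n$ by Lemma~\ref{ww}. This proves the equivalence, and the ``determined by its $\mathcal{A}$-spectrum'' claim follows at once: any connected graph $H$ with $Spec_{\mathcal{A}}(H)=Spec_{\mathcal{A}}(K_n)$ satisfies $m_{\mathcal{A}(H)}(-1)=n-1$, hence $H\cong K_n$ by what was just shown.

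I do not anticipate a genuine obstacle here: the argument is just the trace identity together with Lemma~\ref{ww}. The only point requiring a little care is the bookkeeping in the degenerate case $n=1$, and checking that for $n\geq 2$ the ``leftover'' eigenvalue $n-1$ is genuinely distinct from $-1$, so that $-1$ really is the least eigenvalue and Lemma~\ref{ww} is applicable.
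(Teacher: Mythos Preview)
Your proposal is correct and follows essentially the same route as the paper: compute $Spec_{\mathcal{A}}(K_n)$ directly for sufficiency, and for necessity use $tr(\mathcal{A}(G))=0$ to force $\xi_n=-1$, then invoke Lemma~\ref{ww}. The paper's proof is terser (it does not separate out $n=1$ or spell out why $n-1\neq -1$), but the argument is the same.
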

      \begin{proof}
      	Clearly,  we have $m_{\mathcal{A}(K_n)}(-1)=n-1$. 
      	On the other hand, if $m(-1)=n-1$, then  $\xi_2=\cdots=\xi_{n}=-1$ and $\xi_1=n-1$ by $tr(\mathcal{A}(G))=0$, and thus $G\cong K_n$ by Lemma \ref{ww}. Therefore, $m(-1)=n-1$ if and only if $G\cong K_n$.
      	
      	 It is obvious that $G$ is determined by its $\mathcal{A}$-spectrum.
      \end{proof}

       \begin{theorem}\label{n-2}
      	There is no connected graph $G$ with order $n$ and  $m(-1)=n-2$.
      \end{theorem}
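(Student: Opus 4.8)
The plan is to argue by contradiction. Suppose $G$ is a connected graph of order $n$ with $m(-1)=n-2$. By Theorem~\ref{n-1}, $G\not\cong K_n$, so Proposition~\ref{p1}(iii) gives $d:=diam(G)\ge 2$ and hence $n\ge 3$. I would then split on whether $V_1$ is empty, applying Lemma~\ref{qb1} with $k=2$ in the first case and Lemma~\ref{qm} with $k=3$ in the second.

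\emph{Case $V_1\ne\emptyset$.} By Proposition~\ref{p1}(ii), $d=2$, so $V(G)=V_1\cup V_2$, and Lemma~\ref{qb1} forces $|V_1|\in\{n-1,n-2\}$. If $|V_1|=n-1$, the unique vertex $v\notin V_1$ is adjacent to every vertex of $V_1$ (each of which, having eccentricity $1$, is universal), so $d_G(v)=n-1$ and $\varepsilon(v)=1$, contradicting $v\in V_2$. If $|V_1|=n-2$, then writing $V_2=\{u,w\}$ the universality of $V_1$ forces $u$ and $w$ to be adjacent to all of $V_1$, and $u\not\sim w$ (else $\varepsilon(u)=1$), so $G\cong K_{n-2}\vee 2K_1$. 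For this graph the $(u,w)$ and $(w,u)$ entries of $\mathcal{A}(G)$ equal $2$ and every other entry agrees with the corresponding entry of $J_n-I_n$; writing $\mathcal{A}(G)$ in block form with parts $\{u,w\}$ and $V_1$ and invoking Lemma~\ref{qjz} (with $s_{11}=2$, $p_1=-2$, $s_{22}=1$, $p_2=-1$, $s_{12}=s_{21}=1$) shows that $Spec_{\mathcal{A}}(G)$ consists of the two roots of $P(Q,\lambda)=\lambda^2-(n-1)\lambda-2$ together with $-2$ once and $-1$ with multiplicity $n-3$, where $Q=\begin{pmatrix}2&n-2\\2&n-3\end{pmatrix}$. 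Since $P(Q,-1)=n-2\ne 0$ for $n\ge 3$, this gives $m(-1)=n-3\ne n-2$, a contradiction.

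\emph{Case $V_1=\emptyset$.} Now every vertex has eccentricity at least $2$. Take a shortest path $x_0x_1\cdots x_d$ with $d(x_0,x_d)=d$; then $d(x_1,x_d)=d-1$, so $\varepsilon(x_0)=\varepsilon(x_d)=d$ and $\varepsilon(x_1)\in\{d-1,d\}$, and $x_0,x_1,x_d$ are distinct because $d\ge 2$. In $\mathcal{A}(G)$ the $(x_0,x_1)$-entry is $0$ (since $d(x_0,x_1)=1<2\le\min\{\varepsilon(x_0),\varepsilon(x_1)\}$), the $(x_0,x_d)$-entry is $d$, and the $(x_1,x_d)$-entry equals $c$, where $c=d-1$ if $\varepsilon(x_1)=d-1$ and $c=0$ otherwise. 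Hence the principal submatrix $M^*$ of $\mathcal{A}(G)$ indexed by $\{x_0,x_1,x_d\}$ has characteristic polynomial $\lambda(\lambda^2-(c^2+d^2))$, with roots $0$ and $\pm\sqrt{c^2+d^2}$; as $d\ge 2$ we get $c^2+d^2\ge 4>1$, so $m_{M^*}(-1)=0$. Lemma~\ref{qm} with $k=3$ then gives $m(-1)\le n-3<n-2$, again a contradiction. This completes the proof.

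The only delicate case is $V_1=\emptyset$: there the diameter may be almost $n$, and Lemma~\ref{qb2} (with $k=2$) only handles the range $n\ge 2d-1$. The point that settles it uniformly is to exhibit a principal submatrix of $\mathcal{A}(G)$ of bounded size whose spectrum is completely explicit and avoids $-1$; a diametral path does this because the eccentricity-matrix entries among $x_0,x_1,x_d$ are forced by $d$. Verifying those three entries — in particular that the $(x_0,x_1)$-entry vanishes, which is exactly where $V_1=\emptyset$ enters — and checking $c^2+d^2\ne 1$ is the only real computation, and it is routine.
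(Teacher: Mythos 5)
Your argument is correct, but it takes a genuinely different route from the paper's. The paper's proof is a short eigenvalue count: since $tr(\mathcal{A}(G))=0$, the two eigenvalues other than the $-1$'s sum to $n-2>0$, so $\mathcal{A}(G)$ has exactly one or exactly two positive eigenvalues; in the first case the classification of graphs with exactly one positive $\mathcal{A}$-eigenvalue (Lemma \ref{w2}) gives $m(-1)=t_0-1$, forcing $t_0=n-1$ and $G\cong K_n$, and in the second case all remaining eigenvalues equal $-1$, so $\xi_n=-1$ and Lemma \ref{ww} again forces $G\cong K_n$ --- both contradicting Theorem \ref{n-1}. You instead run a structural analysis on the set $V_1$ of universal vertices: when $V_1\neq\emptyset$, Lemma \ref{qb1} with $k=2$ pins $|V_1|\in\{n-1,n-2\}$, the first value is immediately impossible, and the second forces $G\cong K_{n-2}\vee 2K_1$, whose spectrum you compute correctly via Lemma \ref{qjz} (this reproduces Lemma \ref{r-1} with $r=n-2$ and yields $m(-1)=n-3$); when $V_1=\emptyset$, you interlace against the $3\times 3$ principal submatrix of $\mathcal{A}(G)$ on $\{x_0,x_1,x_d\}$ of a diametral path, whose spectrum $\{0,\pm\sqrt{c^2+d^2}\}$ avoids $-1$ because $d\geq 2$, so Lemma \ref{qm} gives $m(-1)\leq n-3$; your entry computations (in particular that the $(x_0,x_1)$-entry vanishes precisely because $V_1=\emptyset$, and $c\in\{0,d-1\}$) are all right. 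What each approach buys: the paper's proof is very short but leans on two imported results (Lemmas \ref{w2} and \ref{ww}); yours is longer but self-contained relative to the paper's internal machinery (Lemmas \ref{qjz}, \ref{qb1}, \ref{qm}), works uniformly in the large-diameter regime where Lemma \ref{qb2} with $k=2$ would not apply (as you correctly note), and anticipates the $V_1$/diametral-path technique the paper itself deploys for the cases $m(-1)=n-3,\,n-4,\,n-5$.
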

\begin{proof}
	Suppose  there exists a connected graph $G$ with order $n$ satisfying $m(-1)=n-2$, then $\mathcal{A}(G)$ exactly has a positive $\mathcal{A}$-eigenvalue or two positive $\mathcal{A}$-eigenvalues.
	
	If $\mathcal{A}(G)$ exactly has a positive $\mathcal{A}$-eigenvalue, then $G\cong S(t_0,-p,t_1,\cdots,t_q)$ with $p,q\geq 0$, $t_0\geq 1$, $t_i\geq 2$ for $1\leq i \leq q$ and $m(-1)=t_0-1$ by Lemma \ref{w2}. Thus we have $t_0=n-1$, and $G\cong K_n$, which implies a contradiction by Theorem \ref{n-1}.
	
	If $\mathcal{A}(G)$ exactly has two positive $\mathcal{A}$-eigenvalues, then $\xi_{n}=-1$, and thus  $G\cong K_n$ by Lemma \ref{ww}, which implies $m(-1)=n-1$, a contradiction.
\end{proof}

\begin{lemma}\label{n-41}
	Let $G$ be a connected graph with order $n$ and $ diam(G)=d\geq 4$. Then $m(-1)\leq n-5$.
\end{lemma}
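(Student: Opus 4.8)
The plan is to produce, for an arbitrary connected graph $G$ with $diam(G)=d\ge 4$, a set $S$ of five vertices such that the principal submatrix of $\mathcal{A}(G)+I$ indexed by $S$ is nonsingular. Writing $B$ for this $5\times 5$ submatrix, nonsingularity of $B$ is equivalent to $-1$ not being an eigenvalue of the corresponding principal submatrix of $\mathcal{A}(G)$, so Lemma \ref{qm} applied with $\xi=-1$, $k=5$ and $s=0$ yields $m(-1)\le n-5$ at once.

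First I would take a shortest path $v_0v_1\cdots v_d$ joining two vertices with $d(v_0,v_d)=d$, so that $d(v_i,v_j)=|i-j|$ for all $i,j$ (the bound $\le$ is the subpath, and $\ge$ the triangle inequality applied to $d(v_0,v_d)=d$). Hence $\varepsilon(v_0)=\varepsilon(v_d)=d$, while $\varepsilon(v_1),\varepsilon(v_{d-1})\in\{d-1,d\}$ and $\varepsilon(v_2)\in\{d-2,d-1,d\}$. Put $S=\{v_0,v_1,v_2,v_{d-1},v_d\}$, which consists of five distinct vertices because $d\ge 4$. Comparing each distance $|i-j|$ with $\min\{\varepsilon(v_i),\varepsilon(v_j)\}$ to decide which entries of $\mathcal{A}(G)$ survive, one checks that $B$, with rows and columns ordered as in $S$, equals
\begin{equation*}
B=\begin{pmatrix}
1 & 0 & w & x & d\\
0 & 1 & 0 & 0 & y\\
w & 0 & 1 & 0 & z\\
x & 0 & 0 & 1 & 0\\
d & y & z & 0 & 1
\end{pmatrix},
\end{equation*}
where $x,y\in\{0,d-1\}$, $z\in\{0,d-2\}$ and $w\in\{0,2\}$; moreover $w=2$ can occur only when $d=4$ and $\varepsilon(v_2)=2$, and then (by the triangle inequality) $\varepsilon(v_2)=2$ forces $\varepsilon(v_1)=\varepsilon(v_{d-1})=3$, so that $x=y=3$ and $z=2$ and $B$ is completely determined, coinciding with $\mathcal{A}(P_5)+I$.

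Next I would check $\det B\ne 0$ in all cases. For $w=0$, a cofactor expansion gives $\det B=1-x^2-y^2-z^2-d^2+x^2y^2+x^2z^2$. If $x=0$ this is $1-y^2-z^2-d^2<0$; if $x=d-1$ it can be rewritten as $-2d(d-1)+\bigl((d-1)^2-1\bigr)(y^2+z^2)$, which for the four admissible pairs $(y,z)$ becomes $-2d(d-1)$, $d^2(d-1)(d-3)$, $d\bigl((d-2)^3-2(d-1)\bigr)$ and $d\bigl(2d^3-10d^2+15d-8\bigr)$ respectively, each of which is nonzero for every integer $d\ge 4$ (the first is negative, and for the others one checks the value at $d=4$ together with monotonicity in $d$). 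For $w=2$, i.e., $d=4$, $x=y=3$, $z=2$, a direct computation gives $\det B=144\ne 0$. Thus $B$ is always nonsingular, and Lemma \ref{qm} completes the proof.

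The main obstacle is exactly this last verification: one must be sure that none of the polynomials in $d$ that arise as values of $\det B$ has an integer root $\ge 4$. This is precisely where the hypothesis $d\ge 4$ (rather than merely $d\ge 3$) is used, since $d^2(d-1)(d-3)$ vanishes at $d=3$. A further point not to be overlooked is the degenerate configuration $d=4$, $\varepsilon(v_2)=2$, in which the entries $w$ and $z$ become nonzero simultaneously and $B$ is not of the generic form; this case is realised by $G=P_5$, the smallest connected graph of diameter $4$ (for which the bound is in fact attained, $m(-1)=0=n-5$).
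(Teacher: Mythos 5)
Your proposal is correct and follows essentially the same route as the paper: you take the same five vertices $\{v_0,v_1,v_2,v_{d-1},v_d\}$ on a shortest path, form the same $5\times 5$ principal submatrix of $\mathcal{A}(G)+I$, verify its determinant is nonzero in all admissible cases, and conclude via Lemma \ref{qm}. The only difference is cosmetic: you organize the case analysis by the value of $w$ (observing that $w=2$ forces $d=4$ and pins the matrix down to $\mathcal{A}(P_5)+I$), whereas the paper splits on the eccentricity pattern of $v_1,v_2,v_{d-1}$ and carries $a\in\{0,2\}$ formally through each case.
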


\begin{proof}
 Let $d(v_0,v_d)=d$ and $P_{d+1}=v_0v_1v_2\cdots v_d$ be a path with length $d$. Then $\varepsilon(v_0)=\varepsilon(v_d)=d$, $d-1\leq\varepsilon(v_1),\varepsilon(v_{d-1})\leq d$, $d-2\leq\varepsilon(v_{2})\leq d$. 	Clearly,  the principal submatrix of $\mathcal{A}(G)+I$ indexed by $\{v_0,v_1,v_2,v_{d-1},v_d\}$ is
	\begin{equation*}			
		B_1=\begin{pmatrix}
			
			1&0&a&b&d\\
			0&1&0&0&c\\
			a&0&1&0&e\\
			b&0&0&1&0\\
			d&c&e&0&1		
		\end{pmatrix},
	\end{equation*}
	where $a\in \{0,2\},b,c\in\{0,d-1\},e\in\{0,d-2\}$. By direct calculation, we have $P(B_1,0)=-a^2c^2-b^2c^2-b^2e^2-2ade+a^2+b^2+c^2+d^2+e^2-1$. 
	
	\textbf{Case 1}: $\varepsilon(v_2)=d-2, \varepsilon(v_1)=\varepsilon(v_{d-1})=d-1$.
	
	 Then $b=c=d-1,e=d-2$, thus $P(B_1,0)=-a^2d^2-2d^4+2a^2d-2ad^2+10d^3+4ad-15d^2+8d<0$ by $a\in \{0,2\}$ and $ d\geq 4$.
	
	\textbf{Case 2}: $\varepsilon(v_2)=d-2, \varepsilon(v_1)=d-1,\varepsilon(v_{d-1})=d$.
	
	 Then $b=0,c=d-1,e=d-2$, thus $P(B_1,0)=-a^2d^2+2a^2d-2ad^2+4ad+3d^2-6d+4\neq 0$ by $a\in \{0,2\}$ and $ d\geq 4$.
	
	\textbf{Case 3}: $\varepsilon(v_2)=d-2, \varepsilon(v_1)=d,\varepsilon(v_{d-1})=d-1$.
	
	 Then $b=d-1,c=0,e=d-2$, thus $P(B_1,0)=-d^4-2ad^2+6d^3+a^2+4ad-10d^2+6d\neq 0$ by $a\in \{0,2\}$ and $ d\geq 4$.
	
	\textbf{Case 4}: $\varepsilon(v_2)=d-2, \varepsilon(v_1)=\varepsilon(v_{d-1})=d$. 
	
	Then $b=c=0,e=d-2$, thus $P(B_1,0)=-2ad^2+4ad+a^2+2d^2-4d+3\neq 0$  by $a\in \{0,2\}$ and $ d\geq 4$.
	
	\textbf{Case 5}: $\varepsilon(v_2)\geq d-1$.
	
	 Then $a=e=0$, thus $P(B_1,0)=-b^2c^2+b^2+c^2+d^2-1\neq 0$  by $b,c\in \{0,d-1\}$ and $ d\geq 4$. 
	
	Combining the above arguments, we have $P(B_1,0)\neq 0$, which implies $m_{B_1-I}(-1)=0$. Therefore, $m(-1)\leq n-5$ by Lemma \ref{qm}.
\end{proof}
\vskip0.15cm
By Proposition \ref{p1}, Theorem \ref{n-1} and Lemma \ref{n-41}, we have the following conclusion immediately.
\begin{remark}\label{r1}
	Let $G$ be a connected graph with order $n$, $diam(G)=d$ and $n-3\leq m(-1)\leq n-4$. Then $2\leq d\leq 3$.
\end{remark}

\begin{lemma}\label{r-1}
	Let $G\cong K_r\vee (n-r)K_1$ be a complete $(r+1)$-partite graph with $1\leq r\leq n-2$. Then $m(-1)=r-1$.
\end{lemma}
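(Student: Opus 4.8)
The quickest route is to recognise $G$ as a mixed extension of a star and apply Lemma \ref{w2}. Since $1\le r\le n-2$ we have $n-r\ge 2$, and $G\cong K_r\vee\overline{K_{n-r}}$ is precisely the mixed extension $S(t_0,-p)$ of the star $S_2$ of type $(t_0,-p)$ with $t_0=r\ge 1$ and $p=n-r\ge 0$; this is the case $q=0$ in the notation of Lemma \ref{w2}. Thus $G$ has exactly one positive $\mathcal{A}$-eigenvalue, and Lemma \ref{w2} gives $m(-1)=t_0-1=r-1$, which is exactly the claim. This is the plan I would write up as the main proof.

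If a self-contained argument is preferred, I would instead compute $\mathcal{A}(G)$ directly. Every vertex of the $K_r$-part is adjacent to all other vertices and hence has eccentricity $1$, while every vertex of the independent part lies at distance $2$ from the other $n-r-1\ge 1$ independent vertices and hence has eccentricity $2$; so $diam(G)=2$ and, ordering the clique vertices first,
$$\mathcal{A}(G)=\begin{pmatrix} J_r-I_r & J_{r,n-r}\\ J_{n-r,r} & 2J_{n-r}-2I_{n-r}\end{pmatrix}.$$
Applying Lemma \ref{qjz} with $n_1=r$, $n_2=n-r$, $s_{11}=1$, $p_1=-1$, $s_{22}=2$, $p_2=-2$, $s_{12}=s_{21}=1$ then yields $Spec_{\mathcal{A}}(G)=Spec(Q)\cup\left\{\begin{array}{cc}-1 & -2\\ r-1 & n-r-1\end{array}\right\}$ with $Q=\begin{pmatrix} r-1 & n-r\\ r & 2(n-r)-2\end{pmatrix}$, so $m(-1)=r-1$ once we know that $-1$ is not an eigenvalue of $Q$ (the block $p_2=-2$ plainly contributes no $-1$). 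One finishes by the one-line check $\det(Q+I)=r\bigl(2(n-r)-1\bigr)-r(n-r)=r(n-r-1)\neq 0$, valid because $r\ge 1$ and $n-r-1\ge 1$ under the hypothesis.

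No genuine obstacle is expected here: the statement is entirely a consequence of results already in the excerpt, and the only point that needs (minor) attention is the bookkeeping with the hypothesis $1\le r\le n-2$, which is exactly what guarantees $n-r\ge 2$, so that $G\not\cong K_n$, $diam(G)=2$, and the relevant eigenvalue blocks have the stated sizes $r-1\ge 0$ and $n-r-1\ge 1$. (Note that Lemma \ref{kp5} does not apply directly, since it requires every non-clique part to have size at least $2$.)
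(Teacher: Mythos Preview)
Your primary route via Lemma~\ref{w2} is correct and genuinely different from the paper's proof. The paper proceeds exactly as in your alternative argument: it writes down the block form of $\mathcal{A}(G)$, applies Lemma~\ref{qjz} to obtain $P(\mathcal{A}(G),\lambda)=(\lambda+1)^{r-1}(\lambda+2)^{n-r-1}P(Q,\lambda)$ with the same $2\times 2$ quotient $Q$, and finishes by checking $P(Q,-1)=r(n-r-1)\neq 0$. Your observation that $K_r\vee(n-r)K_1\cong S(r,-(n-r))$ with $q=0$ lets you read off $m(-1)=t_0-1=r-1$ directly from Lemma~\ref{w2}, bypassing the quotient computation entirely; this is shorter and conceptually cleaner, at the cost of invoking a stronger cited result rather than being self-contained. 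Your alternative argument and the paper's proof are essentially identical, including the determinant check $\det(Q+I)=r(n-r-1)$.
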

\begin{proof}
	Clearly, the eccentricity matrix $\mathcal{A}(G)$ can be written as
	\begin{equation*}			
		\begin{pmatrix}
			
			J_{r\times r}-I_{r\times r}&J_{r\times (n-r)}\\
			J_{(n-r)\times r}&2J_{(n-r)\times (n-r)}-2I_{(n-r)\times (n-r)}\\		
		\end{pmatrix},
	\end{equation*}
	 and the equitable quotient matrix of $\mathcal{A}(G)$ is
	 	\begin{equation*}			
	 	Q=\begin{pmatrix}
	 		
	 		r-1&n-r\\
	 		r&2(n-r-1)\\		
	 	\end{pmatrix}.
	 \end{equation*}
	 Then $P(\mathcal{A}(G),\lambda)=(\lambda+1)^{r-1}(r+2)^{n-r-1}P(Q,\lambda)$ by Lemma \ref{qjz}. By direct calculation, we have $P(Q,-1)=r(n-r-1)\neq 0$, which implies $m(-1)=r-1$. 
\end{proof}

 \begin{theorem}\label{n-32}
	Let $G$ be a connected graph with order $n\geq 4$. Then $m(-1)=n-3$ if and only if  $G\in \{P_4, K_{n-2}\vee2K_1\}$. In addition, $G$ is determined by its $\mathcal{A}$-spectrum.
\end{theorem}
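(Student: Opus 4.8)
My plan is to dispatch the easy "if" direction first and then run a diameter-based case analysis for "only if". For "if": the graph $K_{n-2}\vee 2K_1$ is the complete $(n-1)$-partite graph $K_r\vee(n-r)K_1$ with $r=n-2$, so Lemma \ref{r-1} gives $m(-1)=(n-2)-1=n-3$. For $P_4$ (relevant only when $n=4$, where $n-3=1$) I would compute $\mathcal{A}(P_4)$ directly: its nonzero entries $2,2,3$ come from the three longest pairwise distances, giving $P(\mathcal{A}(P_4),\lambda)=\lambda^4-17\lambda^2+16=(\lambda^2-1)(\lambda^2-16)$, so $Spec_{\mathcal{A}}(P_4)=\{4,1,-1,-4\}$ and $m(-1)=1$. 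For "only if", let $G$ be connected of order $n\ge4$ with $m(-1)=n-3$. Then $G\not\cong K_n$ (as $m_{\mathcal{A}(K_n)}(-1)=n-1$), and $d:=diam(G)\le3$ by Lemma \ref{n-41}; since $d=1$ forces $G\cong K_n$, we get $d\in\{2,3\}$.

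Suppose $d=2$. If $V_1=\emptyset$, then every vertex has eccentricity $2$ and hence $\mathcal{A}(G)=2A(\overline G)$; as $A(\overline G)$ is an integer symmetric matrix, its rational eigenvalues are integers, so $-\tfrac{1}{2}$ is not an eigenvalue of $A(\overline G)$, whence $m(-1)=0\neq n-3$, a contradiction. So $V_1\neq\emptyset$, and Lemma \ref{qb1} gives $|V_1|\in\{n-2,n-3\}$. Since the vertices of $V_1$ are universal while each vertex of $V_2$ must have a non-neighbour inside $V_2$ to realise its eccentricity $2$, we obtain $G=K_{|V_1|}\vee G[V_2]$ with $G[V_2]$ having no universal vertex. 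If $|V_1|=n-2$, then $G[V_2]=2K_1$ and $G\cong K_{n-2}\vee 2K_1$. If $|V_1|=n-3$, then $G[V_2]$ is a $3$-vertex graph without a universal vertex, i.e. $3K_1$ or $K_2\cup K_1$; by Lemma \ref{r-1}, $K_{n-3}\vee 3K_1$ has $m(-1)=n-4$, and an equitable-quotient computation via Lemma \ref{qjz} (parts: $V_1$, the $K_2$, the singleton) shows $-1$ is not an eigenvalue of the quotient, so $K_{n-3}\vee(K_2\cup K_1)$ also has $m(-1)=n-4$ — both contradicting $m(-1)=n-3$. Thus $d=2$ yields exactly $K_{n-2}\vee 2K_1$.

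Suppose $d=3$; then $V_1=\emptyset$ by Proposition \ref{p1}, so $V(G)=V_2\cup V_3$ with $V_3\neq\emptyset$. Fix a diametral path $v_0v_1v_2v_3$. The $4\times4$ principal submatrix of $\mathcal{A}(G)+I$ on $\{v_0,v_1,v_2,v_3\}$ has $1$ on the diagonal, entry $3$ between $v_0$ and $v_3$, entry $\alpha$ between $v_0$ and $v_2$ (with $\alpha=2$ exactly when $\varepsilon(v_2)=2$, else $\alpha=0$), entry $\beta$ between $v_1$ and $v_3$ (with $\beta=2$ exactly when $\varepsilon(v_1)=2$, else $\beta=0$), and $0$ elsewhere; its determinant is $(1-\alpha^2)(1-\beta^2)-9$, which is nonzero unless $\varepsilon(v_1)=\varepsilon(v_2)=2$, and in the nonzero case Lemma \ref{qm} gives $m(-1)\le n-4$, a contradiction. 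Hence $v_1,v_2\in V_2$, so $|V_2|\ge2$. Conversely, if $|V_2|\ge3$, then three vertices of $V_2$ together with one vertex of $V_3$ span a $4\times4$ submatrix of $\mathcal{A}(G)+I$ with $1$ on the diagonal and off-diagonal entries in $\{0,2\}$ (all relevant distances being $1$ or $2$), which is nonsingular by Lemma \ref{mz} and again forces $m(-1)\le n-4$; so $|V_2|=2$. Similarly the submatrix of $\mathcal{A}(G)+I$ on $V_3$ has $1$ on the diagonal and off-diagonal entries in $\{0,3\}$, hence is nonsingular by Lemma \ref{mz}, giving $m(-1)\le n-|V_3|$ and $|V_3|\le3$. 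Therefore $n=|V_2|+|V_3|\in\{4,5\}$: for $n=4$ the only connected graph with $d=3$ is $P_4$, which works; for $n=5$ a short structural analysis (with $V_2=\{v_1,v_2\}$, $|V_3|=3$, every vertex of $V_3$ at distance $3$ from another vertex, hence the "distance-$3$ graph" on $V_3$ being $P_3$) shows $G$ is $P_4$ with a pendant edge at an inner vertex or a triangle with a pendant path of length $2$, and these have the same eccentricity matrix, for which $\det(\mathcal{A}(G)+I)\neq0$, so $m(-1)=0\neq2$. Hence $d=3$ contributes only $P_4$.

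Combining the two cases, $G\in\{P_4,K_{n-2}\vee 2K_1\}$. For the determination statement: the order $n$ and $m(-1)=n-3$ are read off from $Spec_{\mathcal{A}}$, so any graph $\mathcal{A}$-cospectral with one of these two again lies in $\{P_4,K_{n-2}\vee 2K_1\}$ and has the same order; since for $n=4$ the $\mathcal{A}$-spectra of $P_4$ and $K_2\vee 2K_1$ differ (the latter is $\{-1,-2,\tfrac{3+\sqrt{17}}{2},\tfrac{3-\sqrt{17}}{2}\}$), and for $n\ge5$ only $K_{n-2}\vee 2K_1$ occurs, each graph is determined by its $\mathcal{A}$-spectrum. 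I expect the main obstacle to be the $d=3$ case — computing the determinant of the diametral-path submatrix to pin down $|V_2|=2$, and then the small but error-prone enumeration at $n=5$ — with a secondary difficulty in keeping the multiplicity of $-1$ exact in the $|V_1|=n-3$ subcase, where a miscount in the equitable quotient would change the conclusion.
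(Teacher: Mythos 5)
Your proposal is correct, and it reaches the classification by a genuinely different route in both main cases. For $d=2$ the paper does not split on $V_1$ at all: it proves $G$ is $\overline{P_3}$-free via a $4\times4$ principal submatrix of $\mathcal{A}(G)+I$ (using a common neighbour guaranteed by diameter $2$), invokes Lemma \ref{fr} to get a complete multipartite graph, and finishes with the known spectra of such graphs (Lemmas \ref{kp5}, \ref{kp2}, \ref{r-1}); you instead use the $V_1/V_2$ partition with Lemma \ref{qb1} — the strategy the paper only deploys later, for the $n-4$ and $n-5$ theorems — and your observation that $V_1=\emptyset$, $d=2$ forces $\mathcal{A}(G)=2A(\overline G)$, so $-1\notin Spec_{\mathcal{A}}(G)$ by integrality of the characteristic polynomial, is a clean shortcut that needs no largeness assumption on $n$ (unlike Lemma \ref{qb2}). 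For $d=3$ the paper, after forcing $\varepsilon(v_1)=\varepsilon(v_2)=2$ exactly as you do, handles $n\geq 5$ by listing the six possible induced subgraphs $H_2,\dots,H_7$ on the diametral path plus one extra vertex and checking that the coefficient of $\lambda$ in each $5\times5$ characteristic polynomial is nonzero; you instead bound the structure globally — $|V_2|=2$ via a full-rank $4\times4$ submatrix (three $V_2$-vertices plus one $V_3$-vertex, Lemma \ref{mz} with $a=2$) and $|V_3|\leq 3$ via Lemma \ref{mz} with $a=3$ — so that $n\leq 5$, and then settle $n=5$ by a two-graph enumeration. Both approaches work; yours trades the paper's six symbolic $5\times5$ computations for a sharper structural bound and a tiny finite check, while the paper's avoids the $n=5$ enumeration. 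I verified the computations you assert rather than carry out: the equitable quotient of $K_{n-3}\vee(K_2\cup K_1)$ (parts $V_1$, $K_2$, singleton) satisfies $\det(Q+I)=-2(n-3)\neq 0$, so indeed $m(-1)=n-4$ there; the two $5$-vertex graphs in your $d=3$, $n=5$ case do share the same eccentricity matrix, with $\det(\mathcal{A}(G)+I)=3\neq 0$; and $(1-\alpha^2)(1-\beta^2)-9$ is the correct determinant of the diametral-path submatrix. Two cosmetic points: the $n=5$ conclusion "the distance-$3$ graph on $V_3$ is $P_3$" needs the one-line remark that it cannot be $K_3$ (three pairwise distance-$3$ vertices are impossible with only two further vertices available), and in a final write-up you should display the quotient and determinant computations explicitly, as the paper does for its "direct calculation" steps.
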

\begin{proof}
		By direct calculation, we have $P(\mathcal{A}(P_4),\lambda)=(\lambda+1)(\lambda-1)(\lambda+4)(\lambda-4)$,  then $m_{\mathcal{A}(P_4)}(-1)=1$.  If  $G\cong K_{n-2}\vee2K_1$, then  $m_{\mathcal{A}(G)}(-1)=n-3$  by Lemma \ref{r-1} and $r=n-2$.
	
	Let $m(-1)=n-3$. Now we show $G\in \{P_4, K_{n-2}\vee2K_1\}$.
	
	Clearly, we have $2\leq d\leq 3$ by Remark \ref{r1}. Then we complete the proof by the following two cases.
	
	\textbf{Case 1}: $d=3$.

	  Let $d(v_0,v_3)=3$ and $P_{4}=v_0v_1v_2v_3$ be a path with length $3$. Then $\varepsilon(v_0)=\varepsilon(v_3)=3$, $2\leq\varepsilon(v_1),\varepsilon(v_2)\leq 3$. Firstly, we show $\varepsilon(v_1)=\varepsilon(v_2)=2$.
	 
	 Let $\varepsilon(v_1)=3$, and $B_2$ be  the principal submatrix of $\mathcal{A}(G)+I$ indexed by $\{v_0,v_1,v_{2},v_3\}$, where $x\in \{0,2\}$ and
	 \begin{equation*}			
	 	B_2=\begin{pmatrix}
	 		1&0&x&3\\
	 		0&1&0&0\\
	 		x&0&1&0\\
	 		3&0&0&1		
	 	\end{pmatrix}.
	 \end{equation*}
	  
	  By direct calculation, we have $ P(B_2,0)=-x^2-8<0$, which implies $m_{B_2-I}(-1)=0$. Thus $m(-1)\leq n-4$ by Lemma \ref{qm}, a contradiction. Therefore, $\varepsilon(v_1)=2$. 
	  
	  Similar to the above proof, we have $\varepsilon(v_2)=2$.  
	  
	  Now we complete the proof of Case 1 by the following two Subcases.
	 
	 \textbf{Subcase 1.1}: $n=4$.
	 
	 It is obvious that  $G\cong P_4$.
	 
	 \textbf{Subcase 1.2}: $n\geq 5$.
	 
	 Then there exists a vertex, say, $v_4\in V(G)\setminus \{v_0,v_1,v_2,v_3\}$, satisfying $\varepsilon(v_4)\in\{2,3\}$ and there exists some $v_i(0\leq i\leq3)$ such that $v_i\sim v_4$.  Then the induced subgraph $G[\{v_0,v_1,v_2,v_3,v_4\}]$ must be one of $H_2,H_3,\cdots,H_7$, as shown in Figure $2$.
	 
	  \begin{figure}[!h]
	 	\centering
	 	\begin{tikzpicture}
	 		\node[anchor=south west,inner sep=0] (image) at (0,0) {\includegraphics[width=1\textwidth]{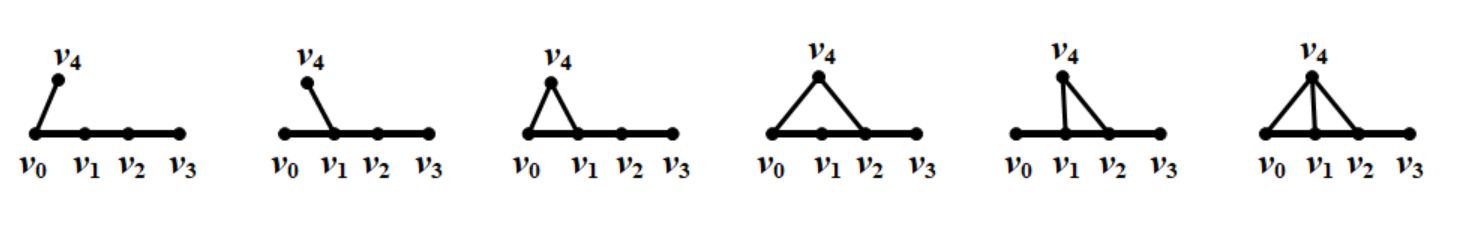}};
	 		\begin{scope}[
	 			x={(image.south east)},
	 			y={(image.north west)}
	 			]
	 			\node [black, font=\bfseries] at (0.08,-0.1) {$H_2$ };
	 			\node [black, font=\bfseries] at (0.25,-0.1) {$H_3$};
	 			\node [black, font=\bfseries] at (0.42,-0.1) {$H_4$};
	 			\node [black, font=\bfseries] at (0.58,-0.1) {$H_5$};
	 			\node [black, font=\bfseries] at (0.75,-0.1) {$H_6$};
	 			\node [black, font=\bfseries] at (0.92,-0.1) {$H_7$};
	 		\end{scope}
	 	\end{tikzpicture}
	 	
	 	\caption{Graphs $H_2,H_3,H_4,H_5,H_6,H_7$.}\label{Figure 2}
	 \end{figure}

	 Let $B_i$ $(3\leq i\leq 8)$ be  the principal submatrix of $\mathcal{A}(G)+I$ indexed by $\{v_0,v_1,v_2,v_3,v_4\}$ of $H_{i-1}$. Then
	  \[			
	 B_3=\begin{pmatrix}
	 	1&0&2&3&0\\
	 	0&1&0&2&2\\
	 	2&0&1&0&2\\
	 	3&2&0&1&a\\
	 	0&2&2&a&1\\
	 	
	 \end{pmatrix}
	 \!
	 ,B_4=\begin{pmatrix}
	 	1&0&2&3&b\\
	 	0&1&0&2&0\\
	 	2&0&1&0&2\\
	 	3&2&0&1&a\\
	 	b&0&2&a&1\\
	 	
	 \end{pmatrix}
	 \!
	 ,B_5=\begin{pmatrix}
	 	1&0&2&3&0\\
	 	0&1&0&2&0\\
	 	2&0&1&0&2\\
	 	3&2&0&1&a\\
	 	0&0&2&a&1\\
	 	
	 \end{pmatrix},
	 \]
	 
	 \[			
	 B_6=\begin{pmatrix}
	 	1&0&2&3&0\\
	 	0&1&0&2&2\\
	 	2&0&1&0&0\\
	 	3&2&0&1&b\\
	 	0&2&0&b&1\\
	 	
	 \end{pmatrix}
	 \!
	 ,B_7=\begin{pmatrix}
	 	1&0&2&3&b\\
	 	0&1&0&2&0\\
	 	2&0&1&0&0\\
	 	3&2&0&1&b\\
	 	b&0&0&b&1\\
	 	
	 \end{pmatrix}
	 \!
	 ,B_8=\begin{pmatrix}
	 	1&0&2&3&0\\
	 	0&1&0&2&0\\
	 	2&0&1&0&0\\
	 	3&2&0&1&b\\
	 	0&0&0&b&1\\
	 	
	 \end{pmatrix},
	 \]
	  where $a\in \{0,2,3\}$, $b\in \{0,2\}$.
	 By direct calculation, for $3\leq i\leq 8$, we have the coefficient of $\lambda$ in $P(B_i,\lambda)$ is not equal to $0$, then $m_{B_i}(0)\leq 1$, which implies$m_{B_i-I}(-1)\leq 1$ and thus $m(-1)\leq n-4$ by Lemma \ref{qm}, a contradiction. For example, if $i=3$, then $P(B_3,\lambda)=\lambda^5-5\lambda^4-15\lambda^3+(65-4a)\lambda^2+(50-4a)\lambda+24a-192$. Clearly, we have $50-4a\neq 0$ by $a\in \{0,2,3\}$.

\textbf{Case 2}: $d=2$.


Firstly, we show $G$ is $\overline{P_3}$-free. Otherwise, there exist three vertices $v_1,v_2,v_3$, such that $G[\{v_1,v_2,v_3\}]=\overline{P_3}$. Without loss of generality, we suppose $v_1\sim v_2,v_1\not\sim v_3$ and $v_2\not\sim v_3$. 

Clearly, by $diam(G)=2$, there exists $v_4\in V(G)$, such that $v_4\sim v_2$ and $v_4\sim v_3$. Let $B_9$ be  the principal submatrix of $\mathcal{A}(G)+I$ indexed by $\{v_1,v_2,v_3,v_4\}$. Then
\begin{equation*}			
	B_9=\begin{pmatrix}
		
		1&0&2&c\\
		0&1&2&e\\
		2&2&1&e\\
		c&e&e&1\\		
	\end{pmatrix},
\end{equation*}
where $c\in \{0,1,2\},e\in \{0,1\}$. 

By direct calculation, we have $P(B_9,\lambda)=\lambda^4-4\lambda^3-(c^2+2e^2+2)\lambda^2+(2c^2-4ce+12)\lambda+3c^2-4ce+6e^2-7$, and $P(B_9,0)=3c^2-4ce+6e^2-7\neq 0$ by  $c\in \{0,1,2\},e\in \{0,1\}$. Then $m_{B_9}(0)= 0$, which implies $m_{B_9-I}(-1)= 0$ and thus $m(-1)\leq n-4$ by Lemma \ref{qm}, a contradiction. Therefore, $G$ is $\overline{P_3}$-free.

By Lemma \ref{fr} and $G$ is $\overline{P_3}$-free, $G$ is a  complete multipartite graph, say, $G\cong K_{n_1,n_2,\cdots,n_k}$, where $n_1\geq n_2\geq \cdots\geq n_k\geq 1$ and $k \geq 2$.

\textbf{Subcase 2.1}: $n_k\geq2$.

Then $m(-1)=0$ by (ii) of Lemma \ref{kp2}, a contradiction.

\textbf{Subcase 2.2}: $n_k=1$.

Without loss of generality, we assume that $n_{k-r+1}=\cdots=n_k=1$, and $n_1\geq n_2\geq\cdots\geq n_{k-r}\geq 2$ if $k-r>0$.

If $k-r\geq2$, then $m(-1)=r-1=(r-\sum\limits_{i=1}^{k-r}n_i)-1\leq(n-n_1-n_{k-r})-1\leq n-5$ by  Lemma \ref{kp5}, a contradiction.

If $k-r=1$, then $G= K_{n-r,1,\cdots,1}\cong K_r\vee (n-r)K_1$ with $1\leq r\leq n-2$ and $m(-1)=r-1$ by Lemma \ref{r-1}, and thus $r=n-2$ by $m(-1)=n-3$. Therefore, $G\cong K_{n-2}\vee2K_1$.

If $k-r=0$, then $G=K_{1,1,\cdots,1}\cong K_n$, and thus $m(-1)=n-1$ by Theorem \ref{n-1}, a contradiction.

Combining the above two subcases, we have $G\cong K_{n-2}\vee2K_1$ if $d=2$, and $m(-1)=n-3$ if and only if $G\in \{P_4, K_{n-2}\vee2K_1\}$ by the above two cases.

Clearly, $G$ is determined by its $\mathcal{A}$-spectrum.
\end{proof}

	\begin{table}[h]\label{11}
	\centering
	
	\renewcommand{\arraystretch}{1.5}
	
	\begin{tabular}{|c|c|}
		\hline
		graph $G$ & $P(\mathcal{A}(G),\lambda)$ \\ \hline
		$K_{n-4}\vee 4K_1$&$(\lambda+1)^{n-5}(\lambda+2)^{3}[\lambda^2-(1+n)\lambda+2n-14]$\\ \hline
		$K_{n-4}\vee(2K_1\cup K_2)$ & $(\lambda+1)^{n-5}(\lambda+2)\lambda[\lambda^3-(n-3)\lambda^2-(2n+10)\lambda+4n-32]$  \\ \hline
		$K_{n-4}\vee( P_3\cup K_1)$ & $(\lambda+1)^{n-5}(\lambda+2)[\lambda^4-(n-3)\lambda^3-(2n+6)\lambda^2+(4n-20)\lambda+8]$ \\ \hline
		$K_{n-4}\vee2K_2$&$(\lambda+1)^{n-5}\lambda^2(\lambda+2)[\lambda^2-(n-3)\lambda-2n+6]$\\ \hline
		$K_{n-4}\vee \renewcommand{\arraystretch}{2} P_4$&\makecell{$(\lambda+1)^{n-5}[\lambda^5-(n-5)\lambda^4-(4n-4)\lambda^3-12\lambda^2$\\$+(8n-16)\lambda+16]$}\\ \hline
		$K_{n-4}\vee(K_3\cup K_1)$&$(\lambda+1)^{n-5}\lambda^2[\lambda^3-(n-5)\lambda^2-(4n-4)\lambda-12]$\\ \hline
		$K_{n-4}\vee C_4$&$(\lambda+1)^{n-5}(\lambda+2)^2[\lambda^3+(1-n)\lambda^2+4n-12]$\\ 
		\hline
	\end{tabular}
	\caption{the characteristic polynomial of $\mathcal{A}(G)$, where $G\in \mathcal{G}_1$}.
\end{table}
\begin{lemma}\label{7}
	Let $G$ be a connected graph with order $n(\geq5)$ and $|V_1|=n-4$, $\mathcal{G}_1=\{K_{n-4}\vee4K_1,K_{n-4}\vee(2K_1\cup K_2),K_{n-4}\vee(P_3\cup K_1),K_{n-4}\vee2K_2,K_{n-4}\vee P_4,K_{n-4}\vee(K_3\cup K_1),K_{n-4}\vee C_4\}$. Then $G\in \mathcal{G}_1$ and $m(-1)=n-5$.
\end{lemma}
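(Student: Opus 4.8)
The plan is to first determine the structure of $G$ from the hypothesis $|V_1|=n-4$, and then to compute the multiplicity of $-1$ in the $\mathcal{A}$-spectrum of each graph that arises.

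\textbf{Step 1 (structure).} Since $|V_1|=n-4\ge 1$ we have $V_1\ne\emptyset$, so $1\le diam(G)\le 2$ by Proposition~\ref{p1}(ii); moreover $diam(G)=1$ would force $G\cong K_n$ by Proposition~\ref{p1}(iii), hence $|V_1|=n$, a contradiction. Thus $diam(G)=2$ and $V(G)=V_1\cup V_2$ with $|V_2|=4$. Every vertex of $V_1$ has eccentricity $1$, hence is adjacent to all other vertices, so $G\cong K_{n-4}\vee H$ with $H:=G[V_2]$ a graph on $4$ vertices. Since $V_1\ne\emptyset$, each $v\in V_2$ lies at distance $1$ from all of $V_1$; therefore $\varepsilon_G(v)=2$ forces $v$ to have a non-neighbour inside $V_2$, i.e. $d_H(v)\le|V_2|-2=2$, so $\Delta(H)\le 2$. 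Conversely, any $4$-vertex graph $H$ with $\Delta(H)\le 2$ is not complete, so $K_{n-4}\vee H$ has diameter $2$ and its eccentricity-$1$ vertices are precisely those of $K_{n-4}$. The $4$-vertex graphs of maximum degree at most $2$ are the disjoint unions of paths and cycles on $4$ vertices, namely $4K_1$, $K_2\cup 2K_1$, $2K_2$, $P_3\cup K_1$, $P_4$, $K_3\cup K_1$ and $C_4$; joining each with $K_{n-4}$ gives exactly $\mathcal{G}_1$. Hence $G\in\mathcal{G}_1$.

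\textbf{Step 2 (multiplicity of $-1$).} For $G=K_{n-4}\vee H\in\mathcal{G}_1$, comparing distances with eccentricities (which are $1$ on $V_1$ and $2$ on $V_2$) shows that, with respect to $(V_1,V_2)$,
\[
	\mathcal{A}(G)=\begin{pmatrix} J_{n-4}-I_{n-4} & J_{(n-4)\times 4}\\ J_{4\times(n-4)} & 2A(\overline{H})\end{pmatrix},
\]
where $\overline{H}$ is the complement of $H$ on $V_2$: an off-diagonal entry equals $2$ exactly on a non-adjacent pair inside $V_2$ and equals $1$ on every pair meeting $V_1$. The $n-4$ vertices of $V_1$ are pairwise-adjacent co-duplicate vertices of eccentricity $1$, so $m(-1)\ge n-5$ by Lemma~\ref{gx}. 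For the reverse inequality, refine $(V_1,V_2)$ to an \emph{equitable} partition of $\mathcal{A}(G)$ by splitting $V_2$ into at most four blocks, each inducing a clique or a coclique of $\overline{H}$ and chosen so that the whole partition is equitable. Lemma~\ref{qjz} then yields $Spec_{\mathcal{A}}(G)=Spec(Q)\cup\{-1^{(n-5)},(-2)^{(a)},0^{(b)}\}$ for suitable $a,b\ge 0$ and a quotient matrix $Q$ of order $2$, $3$, $4$ or $5$. Working out $P(Q,\lambda)$ in the seven cases produces the characteristic polynomials recorded in Table~1; in every case $P(Q,-1)$ is a nonzero constant times $n-4$ (one of $3(n-4)$, $-(n-4)$, $\pm 5(n-4)$), hence nonzero for $n\ge 5$, and $-1\notin\{-2,0\}$. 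Therefore $-1$ has multiplicity exactly $n-5$ in $Spec_{\mathcal{A}}(G)$, i.e. $m(-1)=n-5$.

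\textbf{Main obstacle.} The structural step is short; the bulk of the work is the seven spectral computations. The delicate point there is to exhibit, for each $H$, an equitable partition properly refining $(V_1,V_2)$ — which, except when $H=4K_1$, genuinely requires splitting $V_2$ because $\overline{H}$ is then neither complete nor edgeless — and to verify that the associated quotient matrix $Q$ avoids $-1$ among its eigenvalues. Once the seven determinants are evaluated at $\lambda=-1$, the conclusion is immediate.
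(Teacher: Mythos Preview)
Your proposal is correct and follows essentially the same route as the paper: deduce $d=2$ from $|V_1|=n-4$ via Proposition~\ref{p1}, list the seven $4$-vertex graphs $H$ with $\Delta(H)\le 2$ to obtain $\mathcal{G}_1$, and then apply Lemma~\ref{qjz} to an equitable refinement of $(V_1,V_2)$ to produce the characteristic polynomials of Table~1 and read off $m(-1)=n-5$. Your added remarks (the lower bound $m(-1)\ge n-5$ from Lemma~\ref{gx}, and the observation that the quotient factor evaluated at $\lambda=-1$ is always a nonzero multiple of $n-4$) are correct embellishments but do not change the method.
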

\begin{proof}
	Clearly, we have $d=2$ by $|V_1|=n-4$ and Proposition \ref{p1}. Thus  $V(G)=V_1\cup V_2$,  $G[V_1]=K_{n-4}$, the maximal degree of $G[V_2]$, $\bigtriangleup (G[V_2])\leq 2$, and the number of edges of $G[V_2]$, $0\leq |E(G[V_2])|\leq 4$. Therefore, $G\in\mathcal{G}_1$.  By direct calculation and Lemma \ref{qjz}, we have $P(\mathcal{A}(G),\lambda)$ as shown in  Table 1, and thus $m(-1)=n-5$.
\end{proof}

\begin{theorem}\label{n-42}
	Let $G$ be a connected graph with order $n\geq 9$. Then $m(-1)=n-4$ if and only if $G\in\{ K_{n-3}\vee (K_2\cup K_1) ,  K_{n-3}\vee 3K_1\} $. In addition, $G$ is determined by its $\mathcal{A}$-spectrum.
\end{theorem}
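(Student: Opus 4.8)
\textbf{Plan of proof for Theorem \ref{n-42}.}
The strategy mirrors the proof of Theorem \ref{n-32}. First I would verify the ``if'' direction by direct computation: for $G\cong K_{n-3}\vee 3K_1$ apply Lemma \ref{r-1} with $r=n-3$ to get $m(-1)=n-4$, and for $G\cong K_{n-3}\vee(K_2\cup K_1)$ compute the eccentricity matrix explicitly (it is a $3\times 3$ block matrix with blocks $J-I$, $J$, and the $\mathcal{A}$-matrix of $K_2\cup K_1$ scaled by $2$), pass to the equitable quotient matrix via Lemma \ref{qjz}, and check $P(Q,-1)\neq 0$ so that the multiplicity of $-1$ coming off the diagonal blocks is exactly $(n-3-1)+0+0=n-4$.

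For the ``only if'' direction, assume $m(-1)=n-4$. By Remark \ref{r1} we have $2\le d\le 3$, so I split into two cases. In the case $d=3$, take a diametral path $v_0v_1v_2v_3$; arguing as in Case 1 of Theorem \ref{n-32}, first show $\varepsilon(v_1)=\varepsilon(v_2)=2$ is forced (using a $4\times 4$ principal submatrix of $\mathcal{A}(G)+I$ whose determinant at $0$ is nonzero otherwise, together with Lemma \ref{qm}), and then rule out $n\ge 10$ (or whatever bound the counting gives) by enlarging to a $5$- or $6$-vertex induced subgraph and showing the corresponding principal submatrix of $\mathcal{A}(G)+I$ has nullity too small — this would contradict $m(-1)=n-4$ via Lemma \ref{qm}. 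The small-$n$ residual cases are then checked by hand. In the case $d=2$, I would first show $G$ is $\overline{P_3}$-free: if $G[\{v_1,v_2,v_3\}]=\overline{P_3}$, pick a common neighbor $v_4$ of the two non-adjacent vertices and exhibit a $4\times 4$ (or slightly larger) principal submatrix of $\mathcal{A}(G)+I$ that is nonsingular at $0$, so that Lemma \ref{qm} forces $m(-1)\le n-5$, a contradiction. By Lemma \ref{fr}, $G$ is then a complete multipartite graph $K_{n_1,\dots,n_k}$. Using Lemma \ref{kp2}(ii) we eliminate the case all $n_i\ge 2$; using Lemma \ref{kp5} we eliminate having two parts of size $\ge 2$ (the resulting multiplicity is at most $n-5$ for $n$ large). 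This leaves $G\cong K_r\vee(n-r)K_1$, where Lemma \ref{r-1} gives $m(-1)=r-1$, forcing $r=n-3$, i.e. $G\cong K_{n-3}\vee 3K_1$. But wait — this only recovers one of the two claimed graphs; the graph $K_{n-3}\vee(K_2\cup K_1)$ has diameter $2$ and is not complete multipartite, so the $\overline{P_3}$-free reduction must actually fail to apply there. The resolution is that when $d=2$ and $|V_1|\neq\emptyset$, Lemma \ref{qb1} says $|V_1|\in\{n-3,n-4\}$; the subcase $|V_1|=n-3$ leads (after checking $\overline{P_3}$-freeness carefully within $V_1$-based arguments, or directly) to $K_{n-3}\vee 3K_1$, while the subcase $|V_1|=n-4$ is exactly Lemma \ref{7}, which gives $m(-1)=n-5$, not $n-4$; and the case $V_1=\emptyset$ with $d=2$ is killed by Lemma \ref{qb2} ($n\ge k(d-1)+1=2k-1$ certainly holds, here $k=2$, so $n\ge 3$, giving $m(-1)\le n-k-1=n-3$, which is compatible, so one needs the finer $\overline{P_3}$-free analysis instead). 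So in fact the correct organization is: reduce via $\overline{P_3}$-freeness to complete multipartite graphs, and handle $K_{n-3}\vee(K_2\cup K_1)$ — which is $K_{n-3}\vee(K_1\cup K_1\cup\ \text{edge})$, a graph containing an induced $\overline{P_3}$? No: in $K_2\cup K_1$ joined to $K_{n-3}$, the two vertices of $K_2$ are adjacent and both adjacent to everything in $K_{n-3}$, the lone $K_1$ vertex is adjacent to everything in $K_{n-3}$ but not to the $K_2$ — so three such vertices do induce $\overline{P_3}$, meaning this graph is \emph{not} caught by the $d=2$, $\overline{P_3}$-free route and must surface from the $d=3$ analysis or from a separate diameter-$2$ argument that does not assume $\overline{P_3}$-freeness.

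Given that subtlety, the cleanest route I would actually take: split on $d\in\{2,3\}$; for $d=3$ run the induced-subgraph elimination to pin down a short list of candidates (which should include $K_{n-3}\vee(K_2\cup K_1)$, realized as a specific $\vee$-construction with diameter exactly $3$? — here I must recheck the diameter: in $K_{n-3}\vee(K_2\cup K_1)$ every vertex of $K_{n-3}$ is adjacent to everything, so $\mathrm{diam}=2$, hence this graph is $d=2$, not $d=3$), and for $d=2$ use Lemmas \ref{qb1}, \ref{qb2}, \ref{7} to reduce to $|V_1|=n-3$, then analyze $G[V_2]$ on $3$ vertices directly: $G[V_2]\in\{3K_1, K_2\cup K_1, P_3, K_3\}$ (the last two forcing $G$ complete multipartite or complete, handled by earlier lemmas), compute $P(\mathcal{A}(G),\lambda)$ for each via Lemma \ref{qjz}, and read off that exactly $3K_1$ and $K_2\cup K_1$ give $m(-1)=n-4$.

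\textbf{Main obstacle.} The bookkeeping over induced-subgraph cases in the $d=3$ branch — there are several choices for the eccentricities $\varepsilon(v_i)$ of path vertices and for the attachment pattern of an extra vertex, each yielding a small matrix whose characteristic polynomial must be shown to have a $\lambda$-coefficient (or constant term) that is nonzero for all admissible parameter values; getting the case split exhaustive and the bound on $n$ (for which the excerpt states $n\ge 9$) right is the delicate part. The determinantal identities themselves are routine, but ensuring \emph{completeness} of the case analysis, and correctly identifying which $\vee$-constructions have diameter $2$ versus $3$, is where care is needed; the final spectral computations in the style of Table 1 then close the argument, and the ``determined by its $\mathcal{A}$-spectrum'' claim follows since the two graphs have different orders of vanishing/characteristic polynomials and no cospectral mate among connected graphs can share the forced structure.
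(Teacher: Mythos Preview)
Your final plan for the $d=2$ branch is essentially the paper's proof: once $V_1\neq\emptyset$ one has $d=2$, Lemma \ref{qb1} gives $|V_1|\in\{n-4,n-3\}$, Lemma \ref{7} kills $|V_1|=n-4$, and with $|V_1|=n-3$ one inspects $G[V_2]$ on three vertices. One small sharpening: you do not need to compute spectra for $G[V_2]\cong P_3$ or $K_3$, since in either case some vertex of $V_2$ would be adjacent to every vertex of $G$ and hence have eccentricity $1$, contradicting membership in $V_2$. So only $3K_1$ and $K_2\cup K_1$ survive structurally.

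The genuine gap is your handling of $d=3$, which you flag as the ``main obstacle'' and propose to attack by an exhaustive induced-subgraph enumeration in the style of Theorem \ref{n-32}. This is unnecessary and is exactly what the paper avoids. The key observation you are missing is Proposition \ref{p1}(ii): if $V_1\neq\emptyset$ then $d\le 2$. Hence the $d=3$ case automatically has $V_1=\emptyset$, and Lemma \ref{qb2} disposes of it in one stroke. Concretely, for $d=3$ one takes $k=4$ in Lemma \ref{qb2}; the hypothesis $n\ge k(d-1)+1=2k+1=9$ is precisely the assumption $n\ge 9$ in the theorem, and the conclusion is $m(-1)\le n-5$, a contradiction. (For $d=2$ with $V_1=\emptyset$ the same lemma with $k=4$ needs only $n\ge 5$.) This explains the otherwise mysterious bound $n\ge 9$ and eliminates all of your $d=3$ casework.

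Note also that your invocation of Lemma \ref{qb2} contains an arithmetic slip: for $d=2$ the condition is $n\ge k(d-1)+1=k+1$, not $2k-1$; and the point of the lemma is that $k$ is a \emph{free parameter} you choose as large as the inequality allows, not a fixed quantity like $k=2$. Choosing $k=4$ is what yields $m(-1)\le n-5$ and the desired contradiction. The meandering through the $\overline{P_3}$-free reduction is, as you eventually realized, a dead end here; the paper does not use it for this theorem at all.
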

\begin{proof}
	By direct calculation and Lemma \ref{qjz}, we have $$P(\mathcal{A}(K_{n-3}\vee (K_2\cup K_1)),\lambda)=(\lambda+1)^{n-4}\lambda[\lambda^3-(n-4)\lambda^2-(3n-1)\lambda-8],$$ $$P(\mathcal{A}(K_{n-3}\vee 3K_1),\lambda)=(\lambda+1)^{n-4}(\lambda+2)^2(\lambda^2-n\lambda+n-7),$$ which implies $m(-1)=n-4$ if $G\in\{ K_{n-3}\vee (K_2\cup K_1) ,  K_{n-3}\vee 3K_1\} $.
	
	 Let $m(-1)=n-4$. Then $ 2\leq diam(G)=d\leq3$ by Remark \ref{r1}. If $|V_1|= \emptyset$, then $m(-1)\leq\begin{cases}
	 	n-5,&\ \text{if}\ d=2\\
	 	n-9,&\ \text{if}\ d=3
	 \end{cases}$ by $n\geq 9$ and  Lemma \ref{qb2}, a contradiction with $m(-1)=n-4$. Therefore, $|V_1|\neq \emptyset$, and thus $d=2,V(G)=V_1\cup V_2$ by Proposition \ref{p1}. Moreover, we have $n-4\leq|V_1|\leq n-3$ by Lemma \ref{qb1}.

If $|V_1|=n-4$, then $G\in \mathcal{G}_1$ and $m(-1)=n-5$ by Lemma \ref{7}, a contradiction.

 
If $|V_1|=n-3$, it is obvious that $G\cong  K_{n-3}\vee (K_2\cup K_1)$ or $G\cong K_{n-3}\vee 3K_1$.   

Clearly, $G$ is determined by its $\mathcal{A}$-spectrum.
\end{proof}

\begin{lemma}\label{n-51}
	Let $G$ be a connected graph with order $n$ and $diam(G)=d$, $m(-1)=n-5$. Then $2\leq d\leq 4$.
\end{lemma}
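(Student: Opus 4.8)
The plan is to adapt the proof of Lemma~\ref{n-41}: for $d\ge 5$ I will exhibit a $6\times 6$ principal submatrix of $\mathcal{A}(G)+I$ that is nonsingular, and then Lemma~\ref{qm} will force $m(-1)\le n-6$, contradicting $m(-1)=n-5$. The lower bound $d\ge 2$ needs no work: by Proposition~\ref{p1}(iii), $d=1$ forces $G\cong K_n$, and then $m(-1)=n-1\ne n-5$ by Theorem~\ref{n-1}. So the whole task is to rule out $d\ge 5$.

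Suppose $d\ge 5$ and fix a diametral path $v_0v_1\cdots v_d$ (note $n\ge d+1\ge 6$). The first step is to record the metric data along it. Since $v_0v_1\cdots v_d$ is a shortest path, the triangle inequality together with $d(v_0,v_d)=d$ gives $d(v_i,v_j)=j-i$ for all $i<j$, hence $\varepsilon(v_0)=\varepsilon(v_d)=d$, $\varepsilon(v_1),\varepsilon(v_{d-1})\in\{d-1,d\}$, and $\varepsilon(v_2),\varepsilon(v_{d-2})\in\{d-2,d-1,d\}$. I then take $B$ to be the principal submatrix of $\mathcal{A}(G)+I$ indexed by $\{v_0,v_1,v_2,v_{d-2},v_{d-1},v_d\}$, which are six distinct vertices because $d\ge 5$. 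The point of working with $d\ge 5$ is that, comparing each of the fifteen pairwise distances with the corresponding minimum of the two eccentricities, one finds that the only off-diagonal entries of $B$ that can be nonzero are $(v_0,v_d)=d$, $(v_0,v_{d-2})=a$, $(v_0,v_{d-1})=b$, $(v_1,v_d)=c$, $(v_2,v_d)=e$, with $a,e\in\{0,d-2\}$ and $b,c\in\{0,d-1\}$; all entries among $v_1,v_2,v_{d-2},v_{d-1}$ vanish, and so do the four small-distance pairs $(v_0,v_1),(v_0,v_2),(v_{d-2},v_d),(v_{d-1},v_d)$.

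The computational core is then to prove $\det B\ne 0$ for every admissible choice of the four parameters. Since $B-I$ is the weighted adjacency matrix of a tree — a double broom whose central edge $v_0v_d$ carries weight $d$, with pendant edges of weights $a,b$ at $v_0$ and $c,e$ at $v_d$ — expanding $\det B$ over matchings of this tree (only the empty matching, the five single edges, and the four pairs of vertex-disjoint edges contribute) gives $\det B=1-(a^2+b^2+c^2+d^2+e^2)+(a^2+b^2)(c^2+e^2)$, which I would rewrite as $(u-1)(v-1)-d^2$ with $u=a^2+b^2$ and $v=c^2+e^2$. Each of $u,v$ is either $0$ or at least $(d-2)^2$. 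If one of them is $0$, then $(u-1)(v-1)\le 1<25\le d^2$; otherwise $(u-1)(v-1)\ge\big((d-2)^2-1\big)^2=\big((d-1)(d-3)\big)^2>d^2$, using $(d-1)(d-3)>d$ for $d\ge 5$. Either way $\det B\ne 0$, so $0\notin Spec(B)$, i.e. $m_{B-I}(-1)=m_B(0)=0$; applying Lemma~\ref{qm} with $M=\mathcal{A}(G)+I$, $M^*=B$, $\xi=0$ yields $m(-1)=m_{\mathcal{A}(G)+I}(0)\le n-6$, a contradiction. Hence $d\le 4$.

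The only delicate part is the bookkeeping in the middle step — verifying that for $d\ge 5$ none of the pairs among $v_1,v_2,v_{d-2},v_{d-1}$, and none of the four small-distance pairs listed above, produces a nonzero entry, so that $B$ really has the stated sparse shape — and then recognising the factorisation $(u-1)(v-1)-d^2$, which collapses what would otherwise be a sixteen-case check on a polynomial in $d$ into the two one-line inequalities above.
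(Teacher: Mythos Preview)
Your proof is correct and follows the same approach as the paper: both take the $6\times 6$ principal submatrix of $\mathcal{A}(G)+I$ indexed by $\{v_0,v_1,v_2,v_{d-2},v_{d-1},v_d\}$, obtain the same determinant $(a^2+b^2)(c^2+e^2)-a^2-b^2-c^2-d^2-e^2+1$, and invoke Lemma~\ref{qm}. The paper simply asserts this expression is nonzero ``by direct calculation and $d\ge 5$''; your factorisation $(u-1)(v-1)-d^2$ and the two-case inequality argument make that step explicit and transparent, but the route is the same.
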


\begin{proof}
	Clearly, we have $d= 1$ if and only if $G\cong K_n$, then $m(-1)=n-1>n-5$ by Theorem \ref{n-1}, a contradiction. So $d\geq 2$. Now we show $d\leq 4$.
	
	 Let $d(v_0,v_d)=d\geq 5$ and $P_{d+1}=v_0v_1v_2\cdots v_{d-2}v_{d-1}v_d$ be a path with length $d$. Then $\varepsilon(v_0)=\varepsilon(v_d)=d$, $d-1\leq\varepsilon(v_1),\varepsilon(v_{d-1})\leq d$, $d-2\leq\varepsilon(v_{2}), \varepsilon(v_{d-2})\leq d$. 	Clearly,  the principal submatrix of $\mathcal{A}(G)+I$ indexed by $\{v_0,v_1,v_2,v_{d-2},v_{d-1},v_d\}$ is
	\begin{equation*}			
		B_{10}=\begin{pmatrix}
			
			1&0&0&a&b&d\\
			0&1&0&0&0&c\\
			0&0&1&0&0&e\\
			a&0&0&1&0&0\\
			b&0&0&0&1&0\\
			d&c&e&0&0&1	
		\end{pmatrix},
	\end{equation*}
	where $a,e\in \{0,d-2\},b,c\in\{0,d-1\}$. By direct calculation and $d\geq5$, we have $$P(B_{10},0)=(a^2+b^2)(c^2+e^2)-a^2-b^2-c^2-d^2-e^2+1\neq 0,$$  which implies $m_{B_{10}-I}(-1)=m_{B_{10}}(0)=0$, Thus $m(-1)\leq n-6$ by Lemma \ref{qm}, a contradiction.	Therefore, $ d\leq 4$.
\end{proof}

\begin{lemma}\label{n-52}
	Let $G$ be a connected graph with order $n\geq 16$, $m(-1)=n-5$, $\mathcal{G}_1$ as defined in Lemma \ref{7}. Then $G\in \mathcal{G}_1\cup\{K_{n-5}\vee C_5,K_{n-5}\vee(K_1\cup P_4),K_{n-5}\vee H_1\}$.
\end{lemma}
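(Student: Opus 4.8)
The plan is to first pin down $\mathrm{diam}(G)$ and $|V_1|$, and then, in the only remaining case, to translate the condition $m(-1)=n-5$ into a rank condition on a $5\times 5$ Seidel matrix.

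\emph{Step 1 ($\mathrm{diam}(G)=2$ and $|V_1|\in\{n-4,n-5\}$).} By Lemma \ref{n-51}, $2\le d:=\mathrm{diam}(G)\le 4$. If $V_1=\emptyset$ then, since $n\ge 16\ge 5(d-1)+1$ for every $d\in\{2,3,4\}$, Lemma \ref{qb2} with $k=5$ gives $m(-1)\le n-6$, contradicting $m(-1)=n-5$. Hence $V_1\neq\emptyset$, so $d\le 2$ by Proposition \ref{p1}(ii), i.e.\ $d=2$ and $V(G)=V_1\cup V_2$. Now Lemma \ref{qb1} with $k=5$ forces $|V_1|\in\{n-5,n-4\}$; if $|V_1|=n-4$ then $G\in\mathcal{G}_1$ by Lemma \ref{7} and we are done, so assume $|V_1|=n-5$, i.e.\ $|V_2|=5$.

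\emph{Step 2 (reduction to a Seidel rank).} Each vertex of $V_1$ has eccentricity $1$, hence is adjacent to all others; so $G[V_1]=K_{n-5}$, $V_1$ is completely joined to $V_2$, and $G=K_{n-5}\vee H$ with $H:=G[V_2]$. Each vertex of $V_2$ has eccentricity $2$, hence a non-neighbour inside $V_2$, so $\bigtriangleup(H)\le 3$. Since $\varepsilon\equiv 1$ on $V_1$ and $\varepsilon\equiv 2$ on $V_2$, and a distance-$2$ pair inside $V_2$ is the same as a non-edge of $H$,
\begin{equation*}
\mathcal{A}(G)=\begin{pmatrix} J_{n-5}-I_{n-5} & J_{(n-5)\times 5}\\ J_{5\times(n-5)} & 2A(\overline{H})\end{pmatrix}.
\end{equation*}
In $\mathcal{A}(G)+I_n$ the first $n-5$ rows all equal $\mathbf{1}_n$, so $\mathrm{rank}(\mathcal{A}(G)+I_n)=1+\mathrm{rank}\bigl(2A(\overline{H})+I_5-J_5\bigr)$; and by $A(H)+A(\overline{H})=J_5-I_5$ the latter matrix equals $J_5-I_5-2A(H)$, which is precisely the Seidel matrix $S(H)$ of $H$. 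Therefore $m(-1)=n-1-\mathrm{rank}(S(H))$, and $m(-1)=n-5$ is equivalent to $\mathrm{rank}(S(H))=4$, i.e.\ $0$ is a simple eigenvalue of $S(H)$. (The same identity with $4$ in place of $5$ re-proves Lemma \ref{7}, because every $4$-vertex graph has a nonsingular Seidel matrix.)

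\emph{Step 3 (identifying $H$).} Switching a graph at a vertex set conjugates its Seidel matrix by a $\pm1$ diagonal matrix, so $\mathrm{rank}(S(\cdot))$ is a switching invariant; it therefore suffices to test one graph from each switching class of $5$-vertex graphs. There are seven such classes, with representatives $\overline{K_5},\,K_5,\,C_5,\,K_3\cup 2K_1,\,P_5,\,K_2\cup 3K_1,\,P_3\cup 2K_1$; computing their Seidel spectra (easily, via equitable partitions, since each is a disjoint union of regular graphs) shows $S$ is nonsingular in every class except that of $C_5$, whose Seidel spectrum is $\{0,\sqrt5,\sqrt5,-\sqrt5,-\sqrt5\}$. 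Hence $\mathrm{rank}(S(H))=4$ exactly when $H$ lies in the switching class of $C_5$, which up to isomorphism consists of $C_5$, the bull graph, $K_1\cup P_4$, and $K_1\vee P_4$ (obtained by switching $C_5$ at the empty set, a vertex, a non-edge, and an edge, respectively). The last has maximum degree $4$ and is ruled out by $\bigtriangleup(H)\le 3$, leaving $H\in\{C_5,\,\text{bull},\,K_1\cup P_4\}$. Since $H_1$ of Figure 1 is the bull graph, we obtain $G\in\mathcal{G}_1\cup\{K_{n-5}\vee C_5,\,K_{n-5}\vee(K_1\cup P_4),\,K_{n-5}\vee H_1\}$, as claimed.

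\textbf{Main obstacle.} The only step that is not routine bookkeeping is Step 3: verifying that, among $5$-vertex graphs, precisely the switching class of $C_5$ has a singular Seidel matrix, and that the nullity there equals exactly $1$ (true because $S(C_5)$ has a nonsingular $4\times 4$ principal submatrix, so it cannot have rank $\le 3$). An alternative that avoids switching classes is to enumerate directly all graphs on five vertices with maximum degree at most $3$ (at most twenty-three of them) and, grouping by the number of edges, compute $\mathrm{rank}(S(H))$ — equivalently the characteristic polynomial $P(\mathcal{A}(K_{n-5}\vee H),\lambda)$ — in each case; this is longer but entirely elementary and matches the computational style already used for Lemma \ref{7}.
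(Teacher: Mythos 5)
Your argument is correct, and its Step 1 coincides with the paper's opening: Lemma \ref{n-51} bounds the diameter, Lemma \ref{qb2} (you use $k=5$ uniformly, the paper uses larger $k$; both work for $n\ge16$) excludes $V_1=\emptyset$, Proposition \ref{p1} and Lemma \ref{qb1} give $d=2$ and $|V_1|\in\{n-4,n-5\}$, and Lemma \ref{7} settles $|V_1|=n-4$. The genuine divergence is in the case $|V_2|=5$. The paper proceeds by a case analysis on $G[V_2]$ (connected or not, induced $K_{1,3}$, $P_4$, $C_4$, Lemmas \ref{fr2} etc.) and eliminates each unwanted candidate by computing $P(B_i,0)$ for $6\times6$ principal submatrices $B_{11}$--$B_{20}$ of $\mathcal{A}(G)+I$ together with Lemma \ref{qm}. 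You instead note that the $n-5$ rows of $\mathcal{A}(G)+I$ indexed by $V_1$ are all equal to the all-ones vector and derive the exact identity $m(-1)=n-1-\mathrm{rank}\,S(H)$, $H=G[V_2]$, $S$ the Seidel matrix (your row reduction and the computation $2A(\overline{H})+I_5-J_5=S(H)$ are both correct); then switching invariance of the Seidel rank and the classification of switching classes on five vertices leave only the class of $C_5$, and $\bigtriangleup(H)\le3$ removes the gem $K_1\vee P_4$, leaving $C_5$, the bull (which is the paper's $H_1$) and $K_1\cup P_4$. Your identity is an equivalence, so it also re-proves Lemma \ref{7} and the sufficiency used in Theorem \ref{n-53} without the quotient-matrix computations; the paper's route is longer but stays entirely within tools it has already established.

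Two caveats, neither fatal. First, the assertion that $\overline{K_5},K_5,C_5,K_3\cup2K_1,P_5,K_2\cup3K_1,P_3\cup2K_1$ exhaust the (exactly seven) switching classes on five vertices is classical but is stated without proof or citation and is not covered by the paper's references; your distinct Seidel spectra show these seven are pairwise inequivalent, but completeness of the list needs either a reference to the enumeration of two-graphs on five vertices or your own fallback, the direct check of the at most twenty-three graphs with $\bigtriangleup\le3$, which would close this in the paper's computational style. Second, the parenthetical ``each is a disjoint union of regular graphs'' is false for $P_5$ and $P_3\cup2K_1$, though their $5\times5$ Seidel determinants are immediate anyway.
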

\begin{proof}
	 Let $m(-1)=n-5$. Then $ 2\leq diam(G)=d\leq4$ by Lemma \ref{n-51}. 
	 
	 If $|V_1|= \emptyset$, then $m(-1)\leq\begin{cases}
		n-16,&\ d=2\\
		n-8,&\ d=3\\
		n-6,&\ d=4
	\end{cases}$ by $n\geq 16$ and  Lemma \ref{qb2}, a contradiction with $m(-1)=n-5$. Therefore, $|V_1|\neq \emptyset$, and thus $d=2,V(G)=V_1\cup V_2$ by Proposition \ref{p1}. Moreover, we have $n-5\leq|V_1|\leq n-4$ by Lemma \ref{qb1}.
	
	\textbf{Case 1}: $|V_1|=n-4$.
	
	 It is obvious that $G\in \mathcal{G}_1$ by Lemma \ref{7}. 
	
	\textbf{Case 2}: $|V_1|=n-5$.
	
	 Then $|V_2|=5$ by $V(G)=V_1\cup V_2$, and thus $G\cong K_{n-5}\vee G[V_2]$, the maximal degree of $G[V_2]$, $\bigtriangleup (G[V_2])\leq 3$. Without loss of generality, we assume that $v_1\in V_1$ and $V_2=\{v_2,v_3, v_4, v_5, v_6\}$. Now we show $G\in \{K_{n-5}\vee C_5,K_{n-5}\vee(K_1\cup P_4),K_{n-5}\vee H_1\}$ by the following two subcases.
	
	\textbf{Subcase 2.1}: $G[V_2]$ is a connected graph.
	
	\textbf{Subcase 2.1.1}: $K_{1,3}$ is an induced subgraph of $G[V_2]$.
	
	Let $G[\{v_2,v_3, v_4, v_5\}]=K_{1,3}$, where $v_2\sim v_3$, $v_2\sim v_4$, $v_2\sim v_5$. Then the principal submatrix of $\mathcal{A}(G)+I$ indexed by $\{v_1,v_2,v_3,v_4,v_5,v_6\}$ is
	\begin{equation*}			
		B_{11}=\begin{pmatrix}
			
			1&1&1&1&1&1\\
			1&1&0&0&0&2\\
			1&0&1&2&2&a\\
			1&0&2&1&2&b\\
			1&0&2&2&1&c\\
			1&2&a&b&c&1		
		\end{pmatrix},
	\end{equation*}
	where $a,b,c\in \{0,2\}$. By direct calculation, we have $P(B_{11},0)=2(ab+ac+bc-a^2-b^2-c^2-a-b-c+2)\neq0$, then $m_{B_{11}}(0)=0$, and thus $rank(\mathcal{A}(G)+I)\geq rank(B_{11})=6-m_{B_{11}}(0)= 6$, which implies $m(-1)=m_{\mathcal{A}(G)+I}(0)=n-rank(\mathcal{A}(G)+I)\leq  n-6$, a contradiction with $m(-1)=n-5$.
	
	\textbf{Subcase 2.1.2}: $P_4$ is an induced subgraph of $G[V_2]$, and  $G[V_2]$ is $K_{1,3}$-free.
	
	Let $P_4=v_2v_3v_4v_5$. Then the principal submatrix of $\mathcal{A}(G)+I$ indexed by $\{v_1,v_2,v_3,v_4,v_5,v_6\}$ is
	\begin{equation*}			
		B_{12}=\begin{pmatrix}
			
			1&1&1&1&1&1\\
			1&1&0&2&2&e\\
			1&0&1&0&2&f\\
			1&2&0&1&0&g\\
			1&2&2&0&1&h\\
			1&e&f&g&h&1		
		\end{pmatrix},
	\end{equation*}
	where $e,f,g,h\in \{0,2\}$. By direct calculation, we have $P(B_{12},0)=2(f^2+g^2+ef+3fg+gh+5e+5h-e^2-h^2-eg-3eh-fh-5f-5g)$.
	
		  \begin{figure}[h]
		\centering
		\begin{tikzpicture}
			\node[anchor=south west,inner sep=0] (image) at (0,0) {\includegraphics[width=1\textwidth]{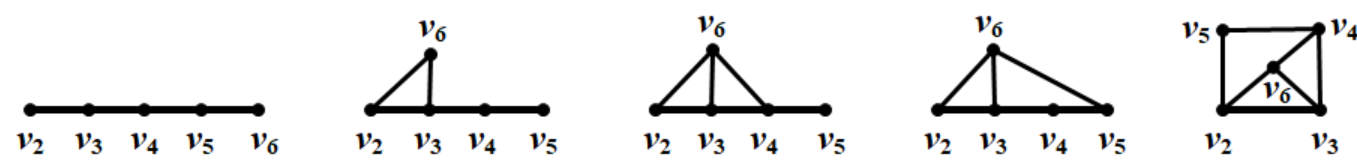}};
			\begin{scope}[
				x={(image.south east)},
				y={(image.north west)}
				]
				\node [black, font=\bfseries] at (0.1,-0.15) {$H_8$ };
				\node [black, font=\bfseries] at (0.34,-0.15) {$H_9$};
				\node [black, font=\bfseries] at (0.55,-0.15) {$H_{10}$};
				\node [black, font=\bfseries] at (0.76,-0.15) {$H_{11}$};
				\node [black, font=\bfseries] at (0.94,-0.15) {$H_{12}$};
			\end{scope}
		\end{tikzpicture}
		
		\caption{Graphs $H_8,H_9,H_{10},H_{11}, H_{12}$.}\label{Figure 3}
	\end{figure}

	 Now we show $G[V_2]\cong C_5$ or $G[V_2]\cong H_1$. Otherwise, $G[V_2]$ must be one of $H_8,H_9,H_{10},H_{11}$, which is shown in Figure 3 by $G[V_2]$ is $K_{1,3}$-free, and the values of $P(B_{12},0)$ is shown in Table 2.

\begin{table}[h]\label{12}
	\centering
	
	\renewcommand{\arraystretch}{1.5}
	
	\begin{tabular}{|p{1.5cm}|p{1cm}|p{1cm}|p{1cm}|p{1cm}|c|}
		\hline
		$G[V_2]$ & $e$& $f$& $g$& $h$&$P(B_{12},0)$ \\ \hline
		$H_8$&$2$&$2$&$2$&$0$&$12$\\ \hline
		$H_9$&$0$&$0$&$2$&$2$&$8$\\ \hline
		$H_{10}$&$0$&$0$&$0$&$2$&$12$\\ \hline
		$H_{11}$&$0$&$0$&$2$&$0$&$-12$\\ \hline
	\end{tabular}
	\caption{the values of $P(B_{12},0)$}.
\end{table}
	\vspace{-0.4cm}
	Then $m_{B_{12}}(0)=0$ by $P(B_{12},0)\neq0$, and thus $rank(\mathcal{A}(G)+I)\geq rank(B_{12})=6-m_{B_{12}}(0)= 6$, which implies $m(-1)=m_{\mathcal{A}(G)+I}(0)=n-rank(\mathcal{A}(G)+I)\leq  n-6$, a contradiction with $m(-1)=n-5$. Therefore, $G[V_2]\cong C_5$ or $G[V_2]\cong H_1$ in this subcase.
	
	\textbf{Subcase 2.1.3}: $C_4$ is an induced subgraph of $G[V_2]$, and $G[V_2]$ is $\{K_{1,3},P_4\}$-free.
	
	Without loss of generality, we assume that $G[\{v_2,v_3,v_4,v_5\}]\cong C_4$ and then  $G[V_2]\cong H_{12}$ by $G[V_2]$ is $\{K_{1,3},P_4\}$-free, where $H_{12}$ as shown in Figure \ref{Figure 3}.
	Let $B_{13}$ be the principal submatrix of $\mathcal{A}(G)+I$ indexed by $\{v_1\}\cup V_2$, where
	\begin{equation*}			
		B_{13}=\begin{pmatrix}
			
			1&1&1&1&1&1\\
			1&1&0&2&0&0\\
			1&0&1&0&2&0\\
			1&2&0&1&0&0\\
			1&0&2&0&1&2\\
			1&0&0&0&2&1		
		\end{pmatrix}.
	\end{equation*}
	Then $P(B_{13},0)=-8\neq0$ by direct calculation, and thus $rank(\mathcal{A}(G)+I)\geq rank(B_{13})=6$, which implies $m(-1)=m_{\mathcal{A}(G)+I}(0)=n-rank(\mathcal{A}(G)+I)\leq  n-6$, a contradiction.
		
		\textbf{Subcase 2.1.4}: $G[V_2]$ is $\{K_{1,3},P_4,C_4\}$-free. 
		
		Then $G[V_2]\cong K_l\vee (K_s\cup K_t)$ by Lemma \ref{fr2}, where $s,t\geq 0$ and $l\geq 1$, and thus $\bigtriangleup(G[V_2])=(l-1)+s+t=|V_2|-1=4$, a contradiction with $\bigtriangleup(G[V_2])\leq 3$.
		
		\textbf{Subcase 2.2}: $G[V_2]$ is a disconnected graph.
		
		\textbf{Subcase 2.2.1}: $G[V_2]$ has five connected components.
		
		Clearly, $G[V_2]\cong 5K_1$. Let $B_{14}$ be the principal submatrix of $\mathcal{A}(G)+I$ indexed by $\{v_1\}\cup V_2$, where
		\begin{equation*}			
			B_{14}=\begin{pmatrix}
				
				1&1&1&1&1&1\\
				1&1&2&2&2&2\\
				1&2&1&2&2&2\\
				1&2&2&1&2&2\\
				1&2&2&2&1&2\\
				1&2&2&2&2&1		
			\end{pmatrix}.
		\end{equation*}
	 Then $P(B_{14},0)=4$ by direct calculation, and thus $rank(\mathcal{A}(G)+I)\geq rank(B_{14})=6$, which implies $m(-1)=m_{\mathcal{A}(G)+I}(0)=n-rank(\mathcal{A}(G)+I)\leq  n-6$, a contradiction.
	 
	 \textbf{Subcase 2.2.2}: $G[V_2]$ has four connected components.
	 
	 Clearly, $G[V_2]\cong 3K_1\cup K_2$, where $v_2\sim v_3$. Let $B_{15}$ be the principal submatrix of $\mathcal{A}(G)+I$ indexed by $\{v_1\}\cup V_2$, where
	 \begin{equation*}			
	 	B_{15}=\begin{pmatrix}
	 		
	 		1&1&1&1&1&1\\
	 		1&1&0&2&2&2\\
	 		1&0&1&2&2&2\\
	 		1&2&2&1&2&2\\
	 		1&2&2&2&1&2\\
	 		1&2&2&2&2&1		
	 	\end{pmatrix}.
	 \end{equation*}
	  Then $P(B_{15},0)=-8$ by direct calculation, and thus $rank(\mathcal{A}(G)+I)\geq rank(B_{15})=6$, which implies $m(-1)=m_{\mathcal{A}(G)+I}(0)=n-rank(\mathcal{A}(G)+I)\leq  n-6$, a contradiction.
	 
	 \textbf{Subcase 2.2.3}: $G[V_2]$ has three connected components.
	 
	 Clearly, $G[V_2]\in\{2K_1\cup P_3, 2K_1\cup K_3,  2K_2\cup K_1\}$. 
	 
	 If $G[V_2]\cong 2K_1\cup P_3$ or $2K_1\cup K_3$, where $G[\{v_2,v_3\}]\cong 2K_1$, then the principal submatrix of $\mathcal{A}(G)+I$ indexed by $\{v_1\}\cup V_2$ is
	 \begin{equation*}			
	 	B_{16}=\begin{pmatrix}
	 		
	 		1&1&1&1&1&1\\
	 		1&1&2&2&2&2\\
	 		1&2&1&2&2&2\\
	 		1&2&2&1&0&a_1\\
	 		1&2&2&0&1&0\\
	 		1&2&2&a_1&0&1		
	 	\end{pmatrix},
	 \end{equation*}
	 where $a_1\in \{0,2\}$.
	 By direct calculation, we have $P(B_{16},0)=-2(a_1^2+3a_1-4)\neq 0$. Thus $rank(\mathcal{A}(G)+I)\geq rank(B_{16})=6$, which implies $m(-1)=m_{\mathcal{A}(G)+I}(0)=n-rank(\mathcal{A}(G)+I)\leq  n-6$, a contradiction.
	 
	 If $G[V_2]\cong 2K_2\cup K_1$, where $v_3\sim v_4,v_5\sim v_6$, then the principal submatrix of $\mathcal{A}(G)+I$ indexed by $\{v_1\}\cup V_2$ is
	 \begin{equation*}			
	 	B_{17}=\begin{pmatrix}
	 		
	 		1&1&1&1&1&1\\
	 		1&1&2&2&2&2\\
	 		1&2&1&0&2&2\\
	 		1&2&0&1&2&2\\
	 		1&2&2&2&1&0\\
	 		1&2&2&2&0&1		
	 	\end{pmatrix}.
	 \end{equation*}
	 By direct calculation, we have $P(B_{17},0)=12$. Thus $rank(\mathcal{A}(G)+I)\geq rank(B_{17})=6$, which implies $m(-1)=m_{\mathcal{A}(G)+I}(0)=n-rank(\mathcal{A}(G)+I)\leq  n-6$, a contradiction.
	 
	 \textbf{Subcase 2.2.4}: $G[V_2]$ has two connected components.
	 
	 Clearly, we have $G[V_2]\in\{K_1\cup P_4,K_1\cup C_4, K_1\cup K_{1,3}, K_1\cup \overline{P_3\cup K_1}, K_1\cup (K_2\vee 2K_1),K_1\cup K_4, K_2\cup P_3,K_2\cup K_3\} $. Now we show $G[V_2]\cong K_1\cup P_4$. 
	 
	  If $G[V_2]\cong K_1\cup C_4$, where $C_4=v_3v_4v_5v_6v_3$; or $G[V_2]\cong K_1\cup \overline{P_3\cup K_1}$, where $v_3\sim v_4,v_3\sim v_5,v_3\sim v_6,v_4\sim v_5$, then the principal submatrix of $\mathcal{A}(G)+I$ indexed by $\{v_1,v_2,v_3,v_4,v_5,v_6\}$ is  
	 \begin{equation*}			
	 	B_{18}=\begin{pmatrix}
	 		
	 		1&1&1&1&1&1\\
	 		1&1&2&2&2&2\\
	 		1&2&1&0&a_2&0\\
	 		1&2&0&1&0&2\\
	 		1&2&a_2&0&1&b_2\\
	 		1&2&0&2&b_2&1		
	 	\end{pmatrix},
	 \end{equation*}
	 where $(a_2,b_2)=\begin{cases}
	 	(2,0),&\ \text{if}\ G[V_2]\cong K_1\cup C_4\\
	 	(0,2),&\ \text{if}\ G[V_2]\cong K_1\cup \overline{P_3\cup K_1}
	 \end{cases}$. By direct calculation, we have $P(B_{18},0)=2a_2b_2+2b^2_2+8-2a^2_2-6a_2-2b_2\neq 0$.
	 
	 If $G[V_2]\cong K_1\cup K_{1,3}$, where $v_2\sim v_3$, $v_2\sim v_4$, $v_2\sim v_5$, then the principal submatrix of $\mathcal{A}(G)+I$ indexed by $\{v_1,v_2,v_3,v_4,v_5,v_6\}$ is $B_{11}$ in Subcase 2.1.1 with $a=b=c=2$, and thus $P(B_{11},0)\neq 0$.

	 If $G[V_2]\cong  K_1\cup (K_2\vee 2K_1)$ or $K_1\cup K_4$, where $G[\{v_3,v_4,v_5,v_6\}]\cong K_2\vee 2K_1$ or $ K_4$, then the principal submatrix of $\mathcal{A}(G)+I$ indexed by $\{v_1,v_2,v_3,v_4,v_5,v_6\}$ is
	 \begin{equation*}			
	 	B_{19}=\begin{pmatrix}
	 		
	 		1&1&1&1&1&1\\
	 		1&1&2&2&2&2\\
	 		1&2&1&0&0&0\\
	 		1&2&0&1&0&0\\
	 		1&2&0&0&1&a_3\\
	 		1&2&0&0&a_3&1		
	 	\end{pmatrix},
	 \end{equation*}
	 where $a_3\in \{0,2\}$.  By direct calculation, we have $P(B_{19},0)=2(a_3^2+a_3-2)\neq 0$.
	 
	 If $G[V_2]\cong K_2\cup P_3$ or $K_2\cup K_3$, where $G[\{v_4,v_5,v_6\}]=P_3$ or $K_3$, then  the principal submatrix of $\mathcal{A}(G)+I$ indexed by $\{v_1,v_2,v_3,v_4,v_5,v_6\}$ is obtained from $B_{16}$ by replacing $2$ by $0$ in the $(2,3),(3,2)$ position, denoted by $B_{20}$, and we have   $P(B_{20},0)=2(a_1^2+a_1-2)\neq 0$ by direct calculation.
	 
	 Combining the above arguments, we get $rank(\mathcal{A}(G)+I)\geq 6$, which implies $m(-1)\leq n-6$, a contradiction. Thus $G[V_2]\cong K_1\cup P_4$ in this subcase.
	 
	 By Case 1 and Case 2, we complete the proof.
\end{proof}
	\begin{table}[h]
	\centering
	
	\renewcommand{\arraystretch}{1.5}
	
	\begin{tabular}{|c|c|}
		\hline
		graph $G$ & $P(\mathcal{A}(G),\lambda)$ \\ \hline
		$K_{n-5}\vee C_5$&$(\lambda+1)^{n-5}(\lambda^2+2\lambda-4)^{2}(\lambda-n+1)$\\ \hline
		$K_{n-4}\vee(K_1\cup P_4)$ & $(\lambda+1)^{n-5}(\lambda^2+2\lambda-4)[\lambda^3-(n-3)\lambda^2-(2n+10)\lambda+4n-36]$  \\ \hline
		$K_{n-4}\vee H_1$ & $(\lambda+1)^{n-5}(\lambda^2+2\lambda-4)[\lambda^3-(n-3)\lambda^2-(2n+2)\lambda+4n-20]$ \\ \hline
	\end{tabular}
	\caption{the characteristic polynomial of $\mathcal{A}(G)$}.
\end{table}
\vspace{-0.5cm}
\begin{theorem}\label{n-53}
		Let $G$ be a connected graph with order $n\geq 16$, $\mathcal{G}_1=\{K_{n-4}\vee4K_1,K_{n-4}\vee(2K_1\cup K_2),K_{n-4}\vee(P_3\cup K_1),K_{n-4}\vee2K_2,K_{n-4}\vee P_4,K_{n-4}\vee(K_3\cup K_1),K_{n-4}\vee C_4\}$. Then $m(-1)=n-5$ if and only if $G\in \mathcal{G}_1\cup\{K_{n-5}\vee C_5,K_{n-5}\vee(K_1\cup P_4),K_{n-5}\vee H_1\}$. In addition, $G$ is determined by its $\mathcal{A}$-spectrum.
\end{theorem}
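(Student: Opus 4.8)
The plan is to combine the structural classification already obtained in Lemma \ref{n-52} with an explicit spectral computation for the three "new" graphs. The sufficiency direction splits into two parts. For $G\in\mathcal{G}_1$ we already have $m(-1)=n-5$ from Lemma \ref{7}, so nothing remains there. For $G\in\{K_{n-5}\vee C_5,\ K_{n-5}\vee(K_1\cup P_4),\ K_{n-5}\vee H_1\}$ I would write down $\mathcal{A}(G)$ in the block form induced by the partition $V(G)=V_1\cup V_2$ with $|V_1|=n-5$ and $|V_2|=5$: the block on $V_1$ is $J-I$, the block between $V_1$ and $V_2$ is the all-ones matrix (every vertex of $V_1$ has eccentricity $1$ and every vertex of $V_2$ has eccentricity $2$, so these distance-$1$ entries survive), and the block on $V_2$ is $2(J-A(G[V_2]))-2D$ where the diagonal records which vertices of $V_2$ fail to realize eccentricity $2$ to some other vertex of $V_2$. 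One then applies Lemma \ref{qjz} (after noting that the $V_1$ part is of the required $s_{ij}J+p_iI$ shape with $p_i=-1$) together with a $6\times 6$ equitable-quotient computation to factor $P(\mathcal{A}(G),\lambda)$. This yields the characteristic polynomials recorded in Table 3, each of which has the factor $(\lambda+1)^{n-5}$ and, crucially, a cofactor that does not vanish at $\lambda=-1$ (this last check is a finite verification: substitute $\lambda=-1$ into the degree-$5$ or degree-$3$ cofactors and confirm the value is a nonzero constant independent of $n$, or at worst a nonzero linear polynomial in $n$ with no root at an integer $n\ge 16$). Hence $m(-1)=n-5$ for these graphs as well.

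For the necessity direction, suppose $G$ is a connected graph of order $n\ge 16$ with $m(-1)=n-5$. By Lemma \ref{n-52} we immediately get $G\in\mathcal{G}_1\cup\{K_{n-5}\vee C_5,\ K_{n-5}\vee(K_1\cup P_4),\ K_{n-5}\vee H_1\}$, so the two directions together give the claimed characterization. The only subtlety is to make sure Lemma \ref{n-52}'s conclusion is an "if and only if" at the level of this theorem: Lemma \ref{n-52} only produces the list as a superset of the solution set, and Lemma \ref{7} plus the Table 3 computation confirm that every member of the list actually attains $m(-1)=n-5$, so the superset is exactly the solution set.

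Finally, for the "determined by its $\mathcal{A}$-spectrum" clause I would argue that the ten graphs in the list have pairwise distinct $\mathcal{A}$-spectra (compare the characteristic polynomials in Tables 1 and 3; they differ, e.g., in the multiplicity of $-2$ as an eigenvalue, in the multiplicity of $0$, or in the coefficients of the surviving cofactor), and that any graph $\mathcal{A}$-cospectral with one of them must again satisfy $m(-1)=n-5$ and hence lie in the same list — so cospectrality forces isomorphism. The main obstacle is not conceptual but bookkeeping: one must verify, for each of the three new graphs, that the quotient-matrix cofactor is genuinely nonvanishing at $\lambda=-1$ (otherwise the multiplicity would jump to $n-4$ and the graph would already have appeared in Theorem \ref{n-42}), and one must confirm the ten polynomials are mutually distinct; both are routine but need to be carried out carefully, paying attention to the threshold $n\ge 16$ that guarantees $V_1\ne\emptyset$ via Lemma \ref{qb2}.
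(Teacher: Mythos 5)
Your proposal is correct and follows essentially the same route as the paper: necessity is delegated to Lemma \ref{n-52}, sufficiency for $\mathcal{G}_1$ to Lemma \ref{7}, and sufficiency for the three remaining graphs to the equitable-quotient computation of Lemma \ref{qjz} yielding the polynomials of Table 3, with the spectral-determination claim read off by comparing Tables 1 and 3. Your extra remarks (refining $V_2$ into singletons to make Lemma \ref{qjz} applicable, checking the cofactors do not vanish at $\lambda=-1$, and noting that cospectrality forces membership in the list) only make explicit what the paper leaves as ``direct calculation.''
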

\begin{proof}
	If $m(-1)= n-5$, then $G\in \mathcal{G}_1\cup\{K_{n-5}\vee C_5,K_{n-5}\vee(K_1\cup P_4),K_{n-5}\vee H_1\}$ by Lemma \ref{n-52}.
	
	 Conversely, if $G\in \{K_{n-5}\vee C_5,K_{n-5}\vee(K_1\cup P_4),K_{n-5}\vee H_1\}$, then by direct calculation and Lemma \ref{qjz}, we have $P(\mathcal{A}(G),\lambda)$ as shown in  Table 3. Thus $m(-1)= n-5$ and $G$ is determined by its $\mathcal{A}$-spectrum if $G\in \mathcal{G}_1\cup\{K_{n-5}\vee C_5,K_{n-5}\vee(K_1\cup P_4),K_{n-5}\vee H_1\}$ by Table 1 and Table 3.
\end{proof}
\vskip0.15cm

By Theorems \ref{n-1}, \ref{n-2}, \ref{n-32}, \ref{n-42}, \ref{n-53}, the results of Theorem \ref{1} hold, and we complete the proof. Clearly, if there exists $v\in V(G)$ such that $\varepsilon(v)=1$, then $\mathcal{A}(G)$ is irreducible, and thus we have the following conclusion by Theorem \ref{1} immediately.

\begin{remark}\label{r2}
	Let $G$ be a connected graph with order $n\geq16$ and $m(-1)\geq n-5$. Then $\mathcal{A}(G)$ is irreducible.
\end{remark}

\section*{\bf Conclusion}

\hspace{1.5em} In this paper, by using equitable quotient matrix, interlacing theorem, the relationship between rank of matrix and the multiplicity of eigenvalue $0$ etc., we characterize graphs with $m(-1)= n-i$, where $i\leq 5$. Actually, we can also characterize graphs  with $m(-1)= n-6$ in a similar way, but the calculation would be complicated.

Fowler and Pisanski \cite{fo} introduced  HL-index of the adjacency matrix of a graph. It is related to the HOMO-LUMO separation studied in the theoretical chemistry. Similarly, Wang et al.\cite{wang3} defined the HL-index $R_\mathcal{A}(G)=\max\{|\xi_{H}|,|\xi_{L}|\}$ with respect to the eccentricity matrix $\mathcal{A}(G)$ of a connected graph $G$ with order $n$, where $H=\lfloor \frac{n+1}{2}\rfloor$, $L=\lceil \frac{n+1}{2} \rceil$. Usually, $\xi_{H}$ and $\xi_{L}$ are called median eigenvalues. Clearly, for sufficiently large $n$, we always have $R_\mathcal{A}(G)=1$ for a connected graph $G$ with $m(-1)= n-i$, where $i\leq 5$.

After careful consideration and demonstration, we propose the following conjecture and two problems about the multiplicity of $-1$ and $0$ in the spectrum of the eccentricity matrix for further research.

\textbf{Conjecture 1}. Let $G$ be a connected graph with order $n$, $m(\xi)=n-i$ for $1\leq i\leq n-\lceil \frac{n+1}{2}\rceil$. Then $\xi\in\{-2,-1,0\}$ for a large enough $n$.

\textbf{Problem 1}. Determine the connected graphs with order $n$ and $m(-1)= n-i$, where $6\leq i\leq n-\lceil \frac{n+1}{2}\rceil$.

\textbf{Problem 2}. Determine the connected graphs with order $n$ and $m(0)= n-i$, where $i\leq 4$.

\section*{\bf Funding}

\hspace{1.5em} This work is  supported by the National Natural Science Foundation of China (Grant Nos. 12371347, 12271337).

\end{document}